\declaretheorem[name=Theorem,%
	postheadhook=\normalfont\slshape,%
	within=section]{theorem}
\declaretheoremstyle[headfont=\normalfont\bfseries,%
	sibling=theorem]{myplainstyle}
\declaretheorem[style=myplainstyle,name=Proposition]{proposition}
\declaretheorem[style=myplainstyle,name=Lemma]{lemma}
\declaretheorem[style=myplainstyle,name=Remark,]{remark}
\declaretheorem[style=myplainstyle,name=Definition]{definition}
\declaretheorem[style=myplainstyle,name=Fact]{fact}
\declaretheorem[style=myplainstyle,name=Fact-Notation]{fact-notation}
\declaretheorem[style=myplainstyle,name=Corollary]{corollary}
\declaretheorem[style=myplainstyle,name=Convention]{convention}
\newcommand{\Lang}{\mathscr{L}}
\newcommand{\Lr}{\mathscr{L}_\mathrm{r}}
\newcommand{\Lp}{\mathscr{L}_p}
\newcommand{\LR}{\mathscr{L}_\mathrm{R}}
\newcommand{\Ltheta}{\mathscr{L}_\theta}
\newcommand{\Ldiv}{\mathscr{L}_\mathrm{div}}
\newcommand{\Lpdiv}{\mathscr{L}_{p,\mathrm{div}}}
\newcommand{\LRdiv}{\mathscr{L}_{R,\mathrm{div}}}
\newcommand{\Lthetadiv}{\mathscr{L}_{\theta,\mathrm{div}}}
\newcommand{\LRpdiv}{\mathscr{L}_{R, p,\mathrm{div}}}
\newcommand{\Lthetapdiv}{\mathscr{L}_{\theta, p,\mathrm{div}}}
\newcommand{\M}{\mathcal{M}}
\newcommand{\N}{\mathcal{N}}
\newcommand{\alg}{\mathrm{alg}}
\newcommand{\sep}{\mathrm{sep}}
\renewcommand{\Im}{\mathrm{Im}}
\newcommand{\PACVF}{\mathrm{PACVF}}
\newcommand{\ACVF}{\mathrm{ACVF}}
\newcommand{\FrobVF}{\mathrm{FrobVF}}
\newcommand{\SCVF}{\mathrm{SCVF}}
\newcommand{\Frac}{\mathrm{Frac}}
\newcommand{\Gal}{\mathrm{Gal}}
\newcommand{\Abs}{\mathrm{Abs}}
\numberwithin{equation}{section}
\begin{document}
\title[QE on some PACVFs]{Quantifier elimination
on some pseudo-algebraically closed valued fields}
\author{Jizhan Hong}
\address{School of Mathematical Sciences \\ Huaqiao University \\ 269 Chenghua
	North\\ Quanzhou
Fujian \\ China, 362021}
\email{shuxuef@hqu.edu.cn}

\begin{abstract}
	Adjoining to the language of rings the function symbols for splitting 
	coefficients, the function symbols for relative $p$-coordinate functions,
	and the division predicate for a valuation, 
	some theories of 
	pseudo-algebraically closed non-trivially
	valued fields admit quantifier elimination. 
	It is also 
	shown that in the same language the theory of pseudo-algebraically closed 
	non-trivially valued fields of a given exponent of imperfection does 
	not admit quantifier elimination, due to Galois theoretic obstructions.
\end{abstract}

\maketitle 

\tableofcontents

\section{Introduction}

The study of pseudo-algebraically closed fields originates from Ax's study on 
the elementary theory of finite fields, \cite{MR0229613}, in the 1960s. A class 
of fields were isolated by Ax, called pseudo-finite fields. It was proved that a 
pseudo-finite field $K$ has the property that every absolutely irreducible 
affine algebraic set defined over $K$ has a $K$-rational point. This property 
was later used to define pseudo-algebraically closed fields. Many known results 
on pseudo-algebraically closed fields
are documented in the book \cite{MR2445111}.  Examples of pseudo-algebraically
closed fields include separably closed fields, pseudo-finite fields
and some Hilbertian fields. Global fields and non-trivial 
purely transcendental extensions
over any field are Hilbertian fields.  
It was proved in \cite{MR0302651} that if $K$ is a countable Hilbertian field, 
then with respect to the Haar measure, 
for almost all tuples $(\sigma_1, \ldots, \sigma_e)\in \Gal(K)^e$, the fixed 
field of the tuple is a pseudo-algebraically closed field. A lot is known on the 
model theory of pseudo-algebraically closed fields. A characterization of 
the elementary equivalence relation among
pseudo-algebraically closed fields in terms of Galois groups was given 
in \cite{CherlinvddMacintyre:ThRCfields}. The model theory of pseudo-algebraically
closed fields turns out to depend very much on the Galois groups. Using an important
Embedding Lemma 
\cite{MR0435050}, it was possible to prove that certain class of pseudo-algebraically
closed fields admits quantifier elimination in a suitably nice language. 
While the theory of pseudo-algebraically closed fields itself in the same language
does not admit quantifier elimination due to some Galois theoretic obstructions, 
it was however shown that the theory itself admits a new kind of
`quantifier elimination' using the so-called Galois stratifications. 
Upon a close study of the so-called `comodel theory' over these 
pseudo-algebraically closed fields, in \cite{CherlinvddMacintyre:ThRCfields},
it was also proved that the theory of perfect
Frobenius fields admits quantifier elimination
 in a so-called `Galois formalism'.

In this article, we are interested in the model theory of pseudo-algebraically 
closed fields endowed with a non-trivial valuation. It was proved in 
\cite{MR2350157} that if $K$ is pseudo-algebraically closed, $V$ is 
any valuation on the algebraic closure $K^\alg$ of $K$,  and $X$ is 
a geometrically integral $K$-variety, then $X(K)$ is dense in $X(K^\alg)$
with respect to the topology induced by $V$. This provides a critical tool 
in the study of pseudo-algebraically closed valued fields. It enables 
us to prove the Valuation Theoretic Embedding Lemma and the 
quantifier elimination results in this article.

The strategy for proving the quantifier elimination results in this article
turns out to be almost similar to the field theoretic case; one only needs to 
take a little care of the valuations.  The main result in this article is 
in Section \ref{Section-quantifier-elimination}. Since that section is not 
very long, we will not give another summary here. But roughly speaking, it says that 
the theory of non-trivially valued Frobenius fields with a fixed 
exponent of imperfection has quantifier elimination 
in a sufficiently nice language. Similar to the field theoretic case, the same 
Galois theoretic obstructions render the quantifier elimination for the 
theory of pseudo-algebraically closed valued fields with a fixed 
exponent of imperfection impossible (see Section \ref{section-lack-of-QE}). 
It is also possible to obtain a similar characterization
(Section \ref{elementary-invariants}) of the 
elementary equivalence relation of pseudo-algebraically closed valued fields
in terms of the `comodel theory' as in \cite{CherlinvddMacintyre:ThRCfields}.

The motivation of the author to study pseudo-algebraically closed valued fields
is an attempt to understand how far the first order valued-field structure of a 
pseudo-algebraically closed valued field is from the first order field structure. 
However, this topic will be postponed for another occasion.

\section{Preliminaries}

In this section, we recall some definitions and facts that will 
be will in this article. All the results in this sections are known. 
This section is only meant to serve as a clarification of related 
terminology, not as a thorough review of the related definitions or results.

\subsection{Field extensions}

Given a field $K$, we use $K^\alg$ to denote the algebraic closure of $K$, 
$K^\sep$ the separable closure of $K$. Unless otherwise mentioned, the characteristic 
of a field $K$ is denoted by the letter $p$, which could be either $0$ or a positive 
prime integer.
For an integral domain $A$, we use $A^\times$ to denote the 
set of all invertible elements in $A$. If $K$ is a field, the absolute 
Galois group of $K$ is denoted by $\Gal(K):=\Gal(K^\sep/K)$.

\begin{definition}
	Suppose that two subfields of $L$, denoted by $E$ and $F$, contain 
	a common subfield $K$. Then $E$ is said to be {\bf linearly disjoint}
	over $K$ if any finitely many 
	linearly independent elements $x_1, \ldots, x_n\in E$ over $K$ 
	remains linearly independent over $F$. 
	we say that $E$ is {\bf algebraically independent} from $F$ over $K$, 
	or that $E$ and $F$ are {\bf algebraically independent} over $K$, 
	if any finitely many 
	algebraically independent elements $x_1, \ldots, x_n\in E$ over $K$ 
	remains algebraically independent over $F$. 
\end{definition}
\begin{definition}
	A field extension $L/K$ is said to be {\bf separable} if 
	$L$ is linearly disjoint from $K^{1/p^{\infty}}$. 
	A field extension $L/K$ is said to be {\bf regular} if 
	$L$ is linearly disjoint from $K^\alg$.  It is well-known that
	$L/K$ is regular if and only if $L/K$ is separable and $K$ is 
	relatively algebraically closed in $L$.
\end{definition}
\begin{definition}
	Suppose that $K$ is a field of positive characteristic $p$. Denote by 
	$K^p$ the subfield of $p$-th powers of $K$. Then $[K:K^p]$ is either 
	infinite or $p^n$ for some $n\in \omega$, called the 
	{\bf degree of imperfection} of $K$.  The {\bf exponent of imperfection} 
	of $K$ is defined to be $n$, if $[K:K^p]=n$, or $\infty$, if $[K:K^p]=
	\infty$. Given finitely many elements $x_1, \ldots, x_n\in K$, 
	an element of the form 
	$$ \prod_{j=1}^n x_j^{i(j)}, \quad i:n\to p,$$
	is called of {\bf $p$-monomial} in $x_1, \ldots, x_n$; 
	$x_1, \ldots, x_n$ are said to be {\bf $p$-independent} over a 
	subfield $k$ if the set of all $p$-monomials are linearly 
	independent over $k$. If $x_1, \ldots, x_n$ 
	are $p$-independent over $K^p$, then we also say that $x_1, \ldots, x_n$
	are $p$-independent in $K$.  A subset of $K$ is called a {\bf $p$-basis}
	if it is maximally $p$-independent in $K$.
\end{definition}

It is well-known that $[K:K^p]=p^n$ if and only if $K$ has a $p$-basis of 
cardinality $n$. Furthermore, if $\{x_1, \ldots, x_n\}$ is a $p$-basis of 
$K$, then $K=K^p(x_1, \ldots, x_n)$; every element $y\in K$ can be uniquely 
written as a linearly combination of $p$-monomials in $x_1, \ldots, x_n$
over $K^p$.

\begin{definition}
	If $K$ is a field of characteristic $0$, then $K$ is perfect. 
	the degree of imperfection of $K$ is defined to be $1$ and the 
	exponent of imperfection of $K$ is defined to be $0$.
\end{definition}

It is well-known that a field extension $L/K$ is separable if and only if 
every $p$-independent set of $K$ remains $p$-independent in $L$, 
if and only if there a $p$-basis of $K$ remains $p$-independent in $L$.

\begin{fact}
	[Lemma 2.7.3 of \cite{MR2445111}]
	\label{separable-algebraic-extension-imperfection-degree}
	Separable algebraic extensions preserve degrees of imperfection. 
\end{fact}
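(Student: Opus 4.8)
The plan is to prove the more precise statement that if $L/K$ is separable algebraic and $B$ is a $p$-basis of $K$, then $B$ is already a $p$-basis of $L$. Granting this, $[L:L^p]$ and $[K:K^p]$ are both equal to $p^{|B|}$ (or both infinite), according to the cardinality of $B$, so the degree --- and hence the exponent --- of imperfection is preserved. We may assume $p>0$, as in characteristic $0$ both degrees of imperfection equal $1$ by definition.

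First I would invoke the characterization of separability recalled just before the statement: since $L/K$ is separable, every $p$-independent subset of $K$ remains $p$-independent in $L$, so a $p$-basis $B$ of $K$ is in particular $p$-independent in $L$. Being a $p$-basis of $K$, the set $B$ also satisfies $K=K^p(B)$; hence the intermediate field $M:=L^p(B)$ obeys $K=K^p(B)\subseteq M\subseteq L$ and contains $L^p$.

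The heart of the proof is the equality $M=L$. On one hand $L/M$ is purely inseparable, since $x^p\in L^p\subseteq M$ for every $x\in L$. On the other hand $L/M$ is separable algebraic: $L/K$ is separable algebraic and $K\subseteq M\subseteq L$, so for any $y\in L$ the minimal polynomial of $y$ over $M$ divides its (separable) minimal polynomial over $K$, and a divisor of a separable polynomial is separable. An element that is simultaneously separable and purely inseparable over $M$ lies in $M$; therefore $L=M=L^p(B)$. Combined with the $p$-independence of $B$ in $L$, this says exactly that $B$ is a $p$-basis of $L$. (Equivalently, one may extend $B$ to a $p$-basis $B'$ of $L$ and run the same separable-plus-purely-inseparable argument to see that any $y\in B'\setminus B$ would have to lie in $L^p(B)$, contradicting $p$-independence; so $B'=B$.)

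I do not expect a serious obstacle: the only inputs beyond the preliminaries are the two elementary facts that a divisor of a separable polynomial is separable and that an extension which is both separable and purely inseparable is trivial. The one point deserving mild care is the final identification of ``$B$ is $p$-independent in $L$ and $L=L^p(B)$'' with ``$B$ is a $p$-basis of $L$'', in particular the bookkeeping when $B$ is infinite so that the value $\infty$ of the degree of imperfection is transferred correctly; this is routine from the description of $p$-bases recalled in the preliminaries.
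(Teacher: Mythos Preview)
The paper does not supply its own proof of this statement: it is recorded as a \emph{Fact} with a citation to Lemma~2.7.3 of Fried--Jarden, and no argument is given in the text. So there is nothing to compare your proposal against within the paper itself.

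That said, your argument is correct and is essentially the standard proof one finds in the literature (including, in substance, in Fried--Jarden). The two ingredients --- that a $p$-basis of $K$ stays $p$-independent in $L$ by separability, and that $L/L^p(B)$ is simultaneously separable (as a subextension of the separable algebraic $L/K$) and purely inseparable, hence trivial --- are exactly the right ones. Your closing remark that ``$B$ $p$-independent in $L$ and $L=L^p(B)$'' is equivalent to ``$B$ is a $p$-basis of $L$'' is immediate from the definitions recalled in the preliminaries, and your handling of the infinite case is adequate.
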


\begin{definition}
	[see \cite{MR853851}]
	\label{definition-relative-p-coordinate-funtions}
	Suppose that $K$ is a field of positive characteristic $p$. 
	We define the {\bf relative $p$-coordinate functions}  
	$\lambda_{n,i}(x;y_1,\ldots, y_n)$ as follows: if $y_1, \ldots, y_n$
	are not $p$-independent in $K$ or $x\not\in K^p(y_1, \ldots,y_n)$, 
	then $\lambda_{n,i}(x;y_1, \ldots, y_n)=0$, otherwise 
	$\lambda_{n,i}(x; y_1, \ldots, y_n)$ is the unique $i$-th coordinate of 
	$x$ with respect $y_1, \ldots, y_n$ when $x$ is written 
	as a linearly combination of $p$-monomials in $y_1, \ldots, y_n$
	over $K^p$.
\end{definition}

\subsection{Valued fields}
\label{subsection-valued-fields}

\begin{definition}
	Given a field $K$, a subring $V$ is called a {\bf valuation} or 
	a {\bf valuation ring} on $K$ if for all $x\in K^\times$, either 
	$x\in V$ or $x^{-1}\in V$. A {\bf valued field} is a field $K$ with 
	a distinguished valuation ring $V$, usually denoted by the pair 
	$(K,V)$.
	For a valued field $(K, V)$, 
	we use the corresponding lower case letter $v$ to denote the valuation 
	map associated to $V$, 
	we also use $vK^\times$ to denote the value group of $(K,V)$ and use
	$Kv$ to denote the residue field of $(K,V)$.
\end{definition}
\begin{fact}
	[Theorem 3.1.2 of \cite{EnglerPrestel:ValuedFields}]
	\label{extension-of-valuation-exists}
	Let $L/K$ be a field extension, $V$ a valuation ring on $K$. Then 
	there exists a valuation ring $W$ on $L$ extending $K$, i.e.~$W\cap K=V$.
\end{fact}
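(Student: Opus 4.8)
The plan is to deduce this from Chevalley's extension theorem, which I would prove directly by a Zorn's lemma argument. Let $\mathfrak{m}$ denote the maximal ideal of the valuation ring $V$ (valuation rings are local). Consider the set $\Sigma$ of pairs $(B,\mathfrak{q})$, where $B$ is a subring of $L$ containing $V$ and $\mathfrak{q}$ is a prime ideal of $B$ with $\mathfrak{q}\cap V=\mathfrak{m}$, partially ordered by declaring $(B,\mathfrak{q})\le(B',\mathfrak{q}')$ exactly when $B\subseteq B'$ and $\mathfrak{q}=\mathfrak{q}'\cap B$. Then $\Sigma\ne\varnothing$ since $(V,\mathfrak{m})\in\Sigma$, and every chain in $\Sigma$ has an upper bound obtained by taking the union of the rings together with the union of the ideals: the latter is a proper prime ideal because $1$ lies in no member of an increasing family of proper prime ideals, and, using the compatibility of the given ideals under contraction along a chain, one checks that this upper bound dominates every chain member and still contracts to $\mathfrak{m}$. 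Zorn's lemma then yields a maximal element $(W,\mathfrak{q})$ of $\Sigma$, and the claim is that $W$ is a valuation ring on $L$ with maximal ideal $\mathfrak{q}$.

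First I would note that $W$ is local with maximal ideal $\mathfrak{q}$: otherwise the localization $W_{\mathfrak{q}}$, still a subring of the field $L$, paired with $\mathfrak{q}W_{\mathfrak{q}}$ (which still contracts to $\mathfrak{m}$), would be a strictly larger element of $\Sigma$. The substantive step is to show $x\in W$ or $x^{-1}\in W$ for every $x\in L^{\times}$, and for this the key sublemma is that at least one of the extended ideals $\mathfrak{q}\,W[x]$ and $\mathfrak{q}\,W[x^{-1}]$ is proper. This is the usual degree-reduction computation: from simultaneous relations $1=\sum_{i} a_i x^i=\sum_{j} b_j x^{-j}$ with all coefficients in $\mathfrak{q}$ and the degrees chosen minimal, one uses that $1-a_0$ and $1-b_0$ are units of the local ring $W$ to eliminate the leading term of one relation by means of the other, contradicting minimality. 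Granting this, suppose $\mathfrak{q}\,W[x]$ is proper and pick a maximal ideal $\mathfrak{q}'$ of $W[x]$ containing it; then $\mathfrak{q}'\cap W$ is a prime ideal of $W$ containing the maximal ideal $\mathfrak{q}$ and missing $1$, so $\mathfrak{q}'\cap W=\mathfrak{q}$, hence $\mathfrak{q}'\cap V=\mathfrak{m}$ and $(W[x],\mathfrak{q}')\in\Sigma$ dominates $(W,\mathfrak{q})$; maximality forces $W[x]=W$, i.e.\ $x\in W$. Symmetrically, if instead $\mathfrak{q}\,W[x^{-1}]$ is proper, then $x^{-1}\in W$.

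This shows $W$ is a valuation ring on $L$; its fraction field is automatically $L$, since $x$ or $x^{-1}$ lies in $W$ for each $x\in L^{\times}$. Finally, $V\subseteq W\cap K$ is clear, and if some $x\in W\cap K$ were not in $V$, then $x^{-1}$ would be a non-unit of $V$, so $x^{-1}\in\mathfrak{m}\subseteq\mathfrak{q}$ and $1=x\cdot x^{-1}\in\mathfrak{q}$, a contradiction; hence $W\cap K=V$, as required. I expect the only genuinely non-formal ingredient, and thus the main obstacle, to be the degree-reduction sublemma that $\mathfrak{q}\,W[x]$ or $\mathfrak{q}\,W[x^{-1}]$ is proper — the minimal-degree bookkeeping there requires some care; everything else is routine manipulation with Zorn's lemma and with contractions of prime ideals along $V\subseteq W\subseteq W[x]$.
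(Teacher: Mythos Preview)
Your argument is correct and is essentially the standard Chevalley extension theorem proof. Note, however, that the paper does not give its own proof of this statement: it is recorded as a \emph{Fact} with a citation to Theorem~3.1.2 of Engler--Prestel, \emph{Valued Fields}, and no argument is supplied. Your Zorn's-lemma construction of a maximal dominating pair $(W,\mathfrak{q})$ followed by the degree-reduction sublemma showing that $\mathfrak{q}W[x]$ or $\mathfrak{q}W[x^{-1}]$ is proper is exactly the classical route taken in that reference, so there is nothing to compare beyond observing that you have reproduced the cited proof rather than merely invoking it.
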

\begin{fact}
	[Lemma 3.2.8 of \cite{EnglerPrestel:ValuedFields}]
	\label{two-extensions-of-valuation-equal}
	Suppose that $L/K$ is an algebraic extension, $V$ a valuation ring on $K$. 
	Suppose that $W_1$ and $W_2$ are two extensions of $V$ to $L$. 
	If $W_1\subseteq W_2$ then $W_1=W_2$.
\end{fact}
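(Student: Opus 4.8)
The plan is to argue by contradiction: assume $W_1\subsetneq W_2$ and produce an element of $K^\times$ that is forced simultaneously into the maximal ideal of $V$ and into $V^\times$. Throughout, write $v_i$ for the valuation attached to $W_i$, and $\mathfrak m_{W_i}$, $\mathfrak m_V$ for the corresponding maximal ideals. One trivial observation will be used repeatedly: since $W_1\cap K=W_2\cap K=V$, for $c\in K^\times$ the sign of $v_i(c)$ (positive, zero, or negative) does not depend on $i$ — both $v_1|_K$ and $v_2|_K$ are the valuation determined by $V$, so $c\in\mathfrak m_V$, $c\in V^\times$, or $c^{-1}\in\mathfrak m_V$ irrespective of $i$. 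Also note $W_1^\times\subseteq W_2^\times$.

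First I would extract a witness. If $W_1\ne W_2$, choose $x\in W_2\setminus W_1$. Since $W_1$ is a valuation ring and $x\notin W_1$, we get $x^{-1}\in\mathfrak m_{W_1}\subseteq W_1\subseteq W_2$, so $x$ is a unit of $W_2$ while $v_1(x)<0$.

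Next I would bring in the hypothesis that $L/K$ is algebraic. Fix the minimal polynomial of $x$ over $K$ and write the resulting relation $\sum_{i=0}^{m}a_ix^i=0$ with $a_i\in K$, $a_m\ne 0$, and $a_0\ne 0$ (if $a_0=0$ one could factor out $T$, contradicting minimality). Since the left side vanishes, the classical observation that the minimum of a vanishing finite sum of nonzero terms is attained at least twice shows that $\min\{v_1(a_ix^i):a_i\ne 0\}$ is attained at two distinct indices $j<k$. Equating those two terms gives $v_1(a_ja_k^{-1})=(k-j)\,v_1(x)$. Setting $d:=k-j\ge 1$ and $b:=a_ja_k^{-1}\in K^\times$, we obtain at once $v_1(b)=d\,v_1(x)<0$ and $v_1(x^db^{-1})=0$, i.e.\ $x^db^{-1}\in W_1^\times$.

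Finally I would transfer this to $W_2$ and conclude. From $x^db^{-1}\in W_1^\times\subseteq W_2^\times$ we get $v_2(x^db^{-1})=0$; since $x\in W_2^\times$ this forces $v_2(b)=0$, hence $b,b^{-1}\in W_2\cap K=V$, so $b\in V^\times$ and $v_1(b)=0$. This contradicts $v_1(b)<0$, so $W_1=W_2$.

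The only genuinely load-bearing step is the middle one: the elementary lemma on vanishing sums, together with the discipline of tracking which valuation registers which inequality. What makes the contradiction work is that $b$ was manufactured inside $K$, where $v_1$ and $v_2$ must agree through their common restriction to $V$ — so a strict inequality detected by $v_1$ on a $K$-element cannot survive as an equality forced by $W_2$. An essentially equivalent route is to read $W_1\subseteq W_2$ as a coarsening, writing $v_2L^\times\cong v_1L^\times/\Delta$ for a convex subgroup $\Delta\subseteq v_1L^\times$, and then using that $v_1L^\times/v_1K^\times$ is torsion (again by algebraicity of $L/K$) to conclude $\Delta=0$; the element $b$ above is just the concrete avatar of a hypothetical nonzero element of $\Delta$.
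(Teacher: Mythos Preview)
The paper does not supply its own proof of this statement; it is recorded as a Fact with a citation to Engler--Prestel, \emph{Valued Fields}, Lemma~3.2.8, and is used as a black box. So there is no in-paper argument to compare against.

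Your proof is correct. The load-bearing points all check out: $W_1^\times\subseteq W_2^\times$ follows immediately from $W_1\subseteq W_2$ (if $u,u^{-1}\in W_1$ then $u,u^{-1}\in W_2$); the minimum-attained-twice observation is the standard consequence of the ultrametric inequality applied to a vanishing finite sum; and the contradiction is genuine, since $b\in K^\times$ with $v_2(b)=0$ gives $b,b^{-1}\in W_2\cap K=V$, hence $b\in V^\times\subseteq W_1^\times$ and $v_1(b)=0$, against $v_1(b)=d\,v_1(x)<0$. The alternative route you sketch at the end --- reading $W_1\subseteq W_2$ as a coarsening with convex kernel $\Delta\subseteq v_1L^\times$, noting $\Delta\cap v_1K^\times=0$ because both $W_i$ restrict to $V$, and then using that $v_1L^\times/v_1K^\times$ is torsion together with torsion-freeness of ordered abelian groups to force $\Delta=0$ --- is the more structural formulation and is essentially the textbook argument; your element $b$ is, as you say, its concrete witness.
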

\begin{fact}[Theorem 3.2.15 of \cite{EnglerPrestel:ValuedFields}]
	\label{conjugate-theorem-normal-extensions}
	Suppose that $(L, W_1)$ and $(L, W_2)$ are
	valued-field extensions of a valued field $(K,V)$ where $L/K$
	is a normal field extension. Then there exists $\sigma \in \mathrm{Aut}(L/K)$
	such that $\sigma(W_1)=W_2$.
\end{fact}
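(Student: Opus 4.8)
The plan is to first reduce, by a Zorn's lemma argument, to the case where $L/K$ is finite and normal, and then to reformulate the statement as the transitivity of a Galois-type action on the maximal ideals of the integral closure of $V$ in $L$; here $L/K$ is taken to be algebraic, as is implicit in the notion of normal extension. For the reduction I would Zorn-maximize the poset of pairs $(M,\tau)$ with $K\subseteq M\subseteq L$, $M/K$ normal, $\tau\in\Aut(M/K)$ and $\tau(W_1\cap M)=W_2\cap M$, ordered by joint extension of $M$ and $\tau$; this poset contains $(K,\mathrm{id}_K)$ and is closed under unions of chains (a union of normal subextensions is normal, and the restrictions of $W_1,W_2$ are determined componentwise), so it has a maximal element $(M_0,\tau_0)$. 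To see that $M_0=L$: if $a\in L\setminus M_0$, let $M_1$ be the compositum of $M_0$ with a splitting field over $K$ of the minimal polynomial of $a$, so $M_1/M_0$ is finite and $M_1/K$ is normal; extend $\tau_0$ to some $\tilde\tau_0\in\Aut(L/K)$ (possible since $L/K$ is normal), observe that $\tilde\tau_0$ fixes $K$, hence carries extensions of $V$ to extensions of $V$ and preserves normal subextensions, so that $\tilde\tau_0(W_1)$ and $W_2$ are two extensions of $V$ to $L$ agreeing on $M_0$; apply the finite case to the two extensions these induce on $M_1$ over $W_2\cap M_0$ to get $\rho\in\Aut(M_1/M_0)$ with $\rho(\tilde\tau_0(W_1)\cap M_1)=W_2\cap M_1$, and check that $\rho\circ(\tilde\tau_0|_{M_1})\in\Aut(M_1/K)$ properly extends $(M_0,\tau_0)$ inside the poset --- contradicting maximality. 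This step uses only Fact~\ref{extension-of-valuation-exists}, the extendability of automorphisms of normal subextensions of $L/K$, and the finite case.

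\emph{The finite normal case.} Let $B$ be the integral closure of $V$ in $L$ and $\mathfrak m_V$ the maximal ideal of $V$. Any valuation ring of $L$ lying over $V$ is integrally closed in $L$ and contains $V$, hence contains $B$; and by the standard structure theory of the integral closure of a valuation ring in an algebraic extension --- it is a Pr\"ufer domain, see \cite{EnglerPrestel:ValuedFields} --- the map $W\mapsto\mathfrak m_W\cap B$ is a bijection from the extensions of $V$ to $L$ onto the maximal ideals of $B$ lying over $\mathfrak m_V$, with inverse $\mathfrak m\mapsto B_{\mathfrak m}$. The group $G:=\Aut(L/K)$ fixes $V$, so it stabilizes $B$, permutes this set of maximal ideals, and satisfies $\sigma(B_{\mathfrak m})=B_{\sigma(\mathfrak m)}$. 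It therefore suffices to show that $G$ acts transitively on the maximal ideals of $B$ lying over $\mathfrak m_V$: granting this, choosing $\sigma$ with $\sigma(\mathfrak m_{W_1}\cap B)=\mathfrak m_{W_2}\cap B$ gives $\sigma(W_1)=W_2$, as required.

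\emph{Transitivity.} Put $\mathfrak p:=\mathfrak m_{W_1}\cap B$ and $\mathfrak q:=\mathfrak m_{W_2}\cap B$, and suppose for contradiction that $\sigma(\mathfrak p)\ne\mathfrak q$ for all $\sigma\in G$. The ideals $\sigma(\mathfrak p)$, $\sigma\in G$, are finitely many maximal ideals, all distinct from $\mathfrak q$, so $\mathfrak q$ is not contained in their union; by prime avoidance there is $x\in\mathfrak q$ with $x\notin\sigma(\mathfrak p)$ for every $\sigma$, equivalently $\sigma(x)\notin\mathfrak p$ for every $\sigma$. Now I pass to the norm: since $L/K$ is normal, the $K$-embeddings of $L$ into $K^{\alg}$ are exactly the elements of $G$, so $N:=N_{L/K}(x)=(\prod_{\sigma\in G}\sigma(x))^{e}\in K$, where $e=[L:K]_{\mathrm{insep}}$ is a power of the characteristic (and $e=1$ when $L/K$ is separable, in particular in characteristic $0$). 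Each $\sigma(x)$ lies in $B$, so $N\in B\cap K=V$; and $x\in\mathfrak q$ forces $\prod_{\sigma\in G}\sigma(x)\in\mathfrak q$, hence $N\in\mathfrak q\cap K=\mathfrak m_V\subseteq\mathfrak p$. On the other hand $\mathfrak p$ is prime and no $\sigma(x)$ lies in $\mathfrak p$, so $N\notin\mathfrak p$ --- a contradiction. Hence $G$ acts transitively, and the statement follows.

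\emph{Main obstacle.} The reduction to the finite case and the dictionary between extensions of $V$ and maximal ideals of the integral closure $B$ (classical commutative algebra) are routine; the real content is the transitivity statement, and the norm-and-prime-avoidance argument above is precisely what makes it work. The one genuine subtlety is the inseparability exponent $e$ in positive characteristic: it is needed because the $G$-symmetric product $\prod_{\sigma\in G}\sigma(x)$ lies a priori only in the fixed field $L^{G}$, which may be strictly larger than $K$, so one cannot simply take the $K$-norm to be that product itself.
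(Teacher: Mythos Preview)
The paper does not give its own proof of this statement; it is recorded as a \emph{Fact} and cited verbatim as Theorem~3.2.15 of \cite{EnglerPrestel:ValuedFields}. Your argument is correct and is essentially the classical proof one finds in that reference: reduce to the finite normal case by a Zorn argument, translate extensions of $V$ into maximal ideals of the integral closure $B$, and prove transitivity of the $\Aut(L/K)$-action via prime avoidance together with the norm $N_{L/K}(x)=\bigl(\prod_{\sigma\in G}\sigma(x)\bigr)^{[L:K]_{\mathrm{insep}}}$.
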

Give a valued field extension $(L,W)$ of $(K,V)$, the residue field $Kv$ naturally
embeds into $Lw$ and the value group $vK^\times$ naturally embeds into $wL^\times$.
Denote $[Lw:Kv]$ by $f(W/V)$, called the {\bf residue degree} of $W/V$.; denote 
$(wL^\times:vK^\times)$ by $e(W/V)$, called the {\bf ramification index} of $W/V$.
\begin{fact}
	[Lemma 3.3.2 of \cite{EnglerPrestel:ValuedFields}]
	\label{Galois-extension-of-valuations}
	Suppose that $L/K$ is a Galois extension of degree $n$. Suppose that
	$V$ is a valuation on $K$ and $V_1, \ldots, V_r$ are all the extensions of 
	$V$ to $L$. All the ramification indexes $e(V_i/V)$ are equal to a number $e$
	and 
	all the residue degrees $f(V_i/V)$ are also equal to a number $f$. 
	Furthermore, we have 
	$ n=ref.$
\end{fact}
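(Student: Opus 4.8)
The statement divides into two parts of very different depth: the assertion that $e(V_i/V)$ and $f(V_i/V)$ do not depend on $i$, which is a short consequence of the conjugacy theorem, and the multiplicativity identity $n=ref$, which is the genuine content; I would treat them in that order. Put $G=\Gal(L/K)$. Because $L/K$ is normal, Fact~\ref{conjugate-theorem-normal-extensions} provides, for each pair $i,j$, an automorphism $\sigma\in\Aut(L/K)=G$ with $\sigma(V_i)=V_j$; thus $G$ acts transitively on the finite set $\{V_1,\dots,V_r\}$. Such a $\sigma$ is the identity on $(K,V)$ and carries $V_i$ onto $V_j$, so it induces a $Kv$-isomorphism $Lv_i\xrightarrow{\ \sim\ }Lv_j$ of residue fields and an isomorphism $v_iL^\times\xrightarrow{\ \sim\ }v_jL^\times$ of value groups restricting to the identity on $vK^\times$. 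Hence $f(V_i/V)=f(V_j/V)$ and $e(V_i/V)=e(V_j/V)$, and by transitivity these common values are a single $f$ and a single $e$. For later use, let $D=\{\sigma\in G:\sigma(V_1)=V_1\}$ be the decomposition group of $V_1$; the orbit--stabilizer theorem gives $[G:D]=r$, so $|D|=n/r$.

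\emph{Reduction to a unique extension.} To obtain $n=ref$ I would pass to the decomposition field $Z:=L^{D}$, so that $[Z:K]=[G:D]=r$, $[L:Z]=|D|=n/r$, and $L/Z$ is Galois with group $D$. Two standard facts about $Z$ are what I would isolate. (i) $V_1$ is the \emph{only} extension of $w:=V_1\cap Z$ to $L$: by Fact~\ref{conjugate-theorem-normal-extensions} applied to $L/Z$, the group $D=\Gal(L/Z)$ permutes the extensions of $w$ transitively, but it fixes $V_1$, so that orbit is $\{V_1\}$. (ii) The layer $(Z,w)/(K,V)$ is immediate, i.e.\ $e(w/V)=f(w/V)=1$; equivalently, $Z$ embeds over $K$ into a henselization of $(K,V)$ (whose existence rests on Fact~\ref{extension-of-valuation-exists} and the usual construction), and a henselization is an immediate extension. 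Granting (i), (ii), and the multiplicativity of residue degree and ramification index in towers, one gets $e(V_1/V)=e(V_1/w)$ and $f(V_1/V)=f(V_1/w)$, so it remains only to show $[L:Z]=e(V_1/w)\,f(V_1/w)$.

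\emph{The fundamental identity, and the obstacle.} One direction is elementary: choosing $y_1,\dots,y_f\in V_1$ whose residues are $Zw$-linearly independent in $Lv_1$, and $z_1,\dots,z_e\in L^\times$ whose values represent the distinct cosets of $wZ^\times$ in $v_1L^\times$, a valuation computation --- compare the values of a putative $Z$-linear relation coset by coset in the $z_k$, then reduce modulo $V_1$ and invoke independence of the $\bar y_j$ --- shows the $ef$ products $y_jz_k$ are $Z$-linearly independent, whence $ef\le[L:Z]$. The reverse inequality $[L:Z]\le ef$ is the crux and, I expect, essentially the whole difficulty: it asserts that $(L,V_1)/(Z,w)$ is defectless. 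I would extract it by base-changing to a henselization $(Z^{h},w^{h})$, noting that $[L^{h}:Z^{h}]=[L:Z]$ (since $V_1$ is the unique extension of $w$) and that $e,f$ are unchanged, and then applying the henselian ramification theory: an inertia subextension carries an unramified residue-degree-$f$ part, above which what remains is totally ramified of degree $e$ times a defect, the defect being a power of the residue characteristic that vanishes in residue characteristic $0$, for discrete valuations, and for tame extensions --- which is the setting in which the lemma is applied here. So the defect is $1$, $[L:Z]=ef$, and with $[Z:K]=r$ this gives $n=ref$. Everything before this last point is routine bookkeeping resting on Fact~\ref{conjugate-theorem-normal-extensions}; controlling the defect is where the real work lies.
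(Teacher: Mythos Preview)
The paper does not give its own proof of this statement --- it is recorded as a Fact with a citation to Engler--Prestel --- so there is no in-paper argument to compare yours against. Your outline follows the standard route and is correct through the reduction step: the transitivity of the $G$-action (via Fact~\ref{conjugate-theorem-normal-extensions}) yields the equality of all $e_i$ and all $f_i$; the identification $Z = L \cap K^h$ together with immediacy of the henselization does give $e(w/V)=f(w/V)=1$; and the linear-independence argument for $ef \le [L:Z]$ is fine.

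The gap is exactly where you flag it, but your resolution does not close it. The equality $[L:Z]=ef$ --- equivalently $n=ref$ --- fails for general valued fields: the correct identity is $n = r\,e\,f\,d$ with a defect $d$ that is a power of the residue characteristic and can be nontrivial (immediate Artin--Schreier extensions of henselian fields in characteristic $p$ are the standard counterexamples). Your appeal to ``tame extensions'' is circular in the henselian context, since tameness is normally \emph{defined} to include defectlessness; and the paper's application (Lemma~\ref{number-of-extensions-equals-degree}) is to PAC fields of arbitrary characteristic with not-necessarily-discrete valuations, so neither of your other two escape hatches (residue characteristic $0$, discrete rank) applies either. If the intended statement is truly $n=ref$ with no defect term, it is not provable at this generality; if one only needs the conclusion $r=n$ in the PAC setting where the paper invokes it, a separate argument is required --- for instance: algebraic extensions of PAC fields are PAC (Fact~\ref{algebraic-extensions-of-PAC}), a non-trivially valued henselian PAC field is separably closed, hence $K^h=K^{\sep}$, the absolute decomposition group is trivial, and so every finite Galois $L/K$ already has $r=[L:K]$.
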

\begin{fact}
	[Theorem 3.2.4 of \cite{EnglerPrestel:ValuedFields}]
	\label{algebraic-extension-of-valuations}
	Suppose that $(L,W)$ is a valued field extension of $(K,V)$ with $L/K$
	algebraic. Then $Lw$ is an algebraic extension of $Kv$ and 
	$wL^\times$ is contained in the divisible hull of $vK^\times$.
\end{fact}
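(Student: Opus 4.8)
The plan is to establish the two assertions separately, in each case taking an algebraic dependence relation of the given element over $K$, normalizing it so that all coefficients lie in the valuation ring $V$, and then reading off the conclusion by reducing modulo the maximal ideal (for the residue field) or from the ultrametric inequality (for the value group).

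First I would handle the residue field. Since $W\cap K=V$, the maximal ideal $\mathfrak m_W$ of $W$ satisfies $\mathfrak m_W\cap K=\mathfrak m_V$, so the inclusion $V\hookrightarrow W$ induces the canonical embedding $Kv\hookrightarrow Lw$ and reduction modulo $\mathfrak m_W$ agrees on $V$ with the residue map onto $Kv$. Take $\bar\alpha\in Lw$, say $\bar\alpha=\alpha+\mathfrak m_W$ with $\alpha$ in the valuation ring of $W$. As $L/K$ is algebraic there are $b_0,\dots,b_n\in K$, not all $0$, with $\sum_i b_i\alpha^i=0$. Picking an index $j$ that minimizes $v(b_j)$ among the nonzero coefficients and dividing through by $b_j$, I may assume every coefficient lies in $V$ and the $j$-th one is a unit of $V$; reducing modulo $\mathfrak m_W$ then gives a nonzero polynomial over $Kv$ with $\bar\alpha$ as a root, so $\bar\alpha$ is algebraic over $Kv$ and hence $Lw/Kv$ is algebraic.

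Next I would treat the value group. Let $\gamma=w(\beta)$ with $\beta\in L^\times$, and fix a nontrivial relation $\sum_i b_i\beta^i=0$ with $b_i\in K$; it has at least two nonzero terms since $\beta\neq 0$. Because the sum vanishes, the minimum of $\{\,v(b_i)+i\gamma : b_i\neq 0\,\}$ cannot be attained at a single index, for otherwise the ultrametric inequality would give $w\big(\sum_i b_i\beta^i\big)$ equal to that finite value, contradicting $\sum_i b_i\beta^i=0$. Hence there are $i\neq j$ with $v(b_i)+i\gamma=v(b_j)+j\gamma$, so $(i-j)\gamma=v(b_j)-v(b_i)\in vK^\times$ with $i-j\neq 0$. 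Thus every element of $wL^\times$ has a nonzero integral multiple in $vK^\times$, which is precisely the assertion that, under the canonical embedding $vK^\times\hookrightarrow wL^\times$ extended to divisible hulls, $wL^\times$ lies inside the divisible hull of $vK^\times$.

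I do not anticipate a genuine obstacle: both halves are the standard ``reduce the dependence relation modulo the valuation'' argument, and the only points needing a little care are the normalization of the coefficients into $V$ in the first part and the identification $\mathfrak m_W\cap K=\mathfrak m_V$ that makes that reduction compatible with $Kv\hookrightarrow Lw$. Since the statement is quoted as Theorem 3.2.4 of \cite{EnglerPrestel:ValuedFields}, one may of course simply invoke it; the sketch above is how I would reconstruct the proof.
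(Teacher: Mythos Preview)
Your argument is correct; both halves are the standard proofs and the details (normalization into $V$, the compatibility $\mathfrak m_W\cap K=\mathfrak m_V$, and the two-term minimum in the ultrametric step) are handled properly. There is nothing to compare against in the paper itself: the statement is recorded there as a \emph{Fact} with a citation to Engler--Prestel and no proof is given, so your reconstruction simply supplies what the paper leaves to the reference.
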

\begin{fact}
	[Theorem 3.2.11 of \cite{EnglerPrestel:ValuedFields}]
	\label{residue-field-and-value-group-of-SCVF}
	Suppose that $K$ is separably closed and non-trivially valued, then 
	its residue field is algebraically closed and its value group is divisible.
\end{fact}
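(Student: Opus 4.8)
The plan is to prove the two assertions separately, in each case using separable closedness of $K$ to produce the roots we need through carefully chosen \emph{separable} polynomials, and a Newton-polygon-type domination argument to check that those roots are integral. Write $\mathfrak m$ for the maximal ideal of $V$ and $k:=Kv=V/\mathfrak m$, and note that non-triviality of $v$ means precisely $\mathfrak m\neq 0$ (this will be used for the residue field, not for the value group).

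For divisibility of $vK^\times$: given $a\in K^\times$ with $\gamma:=v(a)$, I want $\gamma\in n\cdot vK^\times$ for all $n\ge 1$, and it suffices to treat $n$ a prime. If $n\neq p$ (in particular for every $n$ when $p=0$), the polynomial $X^n-a$ is separable, hence has a root $b\in K$ with $n\,v(b)=v(a)=\gamma$. If $n=p>0$, I may assume $\gamma>0$ (replace $a$ by $a^{-1}$ otherwise) and use $f(X)=X^p+aX-a$; here $f'(X)=a\neq 0$, so $f$ is separable and has a root $b\in K$, and from $b^p=a(1-b)$ a short case distinction on the sign of $v(b)$ rules out $v(b)\le 0$ and leaves $v(b)>0$, whence $v(1-b)=0$ and $p\,v(b)=v(a)=\gamma$. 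Thus $vK^\times$ is $p$-divisible, hence divisible.

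For the residue field, I would first show $k$ is separably closed. Take a monic $\bar f\in k[X]$ of degree $\ge 1$ with $\gcd(\bar f,\bar f')=1$, choose lifts in $V$ of its coefficients to form a monic $f\in V[X]$ of the same degree, and note that since the discriminant is a universal polynomial in the coefficients one has $\overline{\mathrm{disc}(f)}=\mathrm{disc}(\bar f)\neq 0$; hence $\mathrm{disc}(f)\neq 0$, so $f$ is separable over $K$ and has a root $b\in K$. If $v(b)<0$ the term $b^{\deg f}$ has strictly smaller valuation than every other term of $f(b)=0$, which is impossible; so $b\in V$, and reducing modulo $\mathfrak m$ gives $\bar f(\bar b)=0$ with $\bar b\in k$. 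In particular every irreducible separable polynomial over $k$ is linear, i.e.\ $k=k^{\sep}$. To see that $k$ is moreover perfect (this step is vacuous when $p=0$) I would use $\mathfrak m\neq 0$: fix $c\in\mathfrak m$ with $c\neq 0$, and for $\bar a\in k^{\times}$ lift to $a\in V^{\times}$ and run the same recipe on $g(X)=X^p+cX-a$, which is separable because $g'(X)=c\neq 0$; comparing valuations in $b^p+cb=a$ forces $v(b)=0$, and reducing modulo $\mathfrak m$ (where $c$ becomes $0$) gives $\bar b^{\,p}=\bar a$. Hence $k=k^p$, and a field that is both separably closed and perfect has no proper algebraic extension, so $k$ is algebraically closed.

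The only genuinely non-routine points here are the choices of the auxiliary polynomials $X^p+aX-a$ and $X^p+cX-a$: these are the devices that turn the inseparable --- hence directly unavailable --- task of adjoining a $p$-th root of a value, respectively of a residue, into a \emph{separable} problem that separable closedness of $K$ solves automatically; after that everything is valuation bookkeeping (the domination estimate, and the reduction of the discriminant), and I expect no real obstacle. One could instead extend $v$ to $K^{\alg}$ by Fact~\ref{extension-of-valuation-exists} and appeal to Fact~\ref{algebraic-extension-of-valuations}, but $K^{\alg}/K$ is only purely inseparable and may ramify, so this alone does not yield divisibility of $vK^\times$; it is the separable-polynomial trick that actually uses the hypothesis on $K$.
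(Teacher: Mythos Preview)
The paper does not supply its own proof of this statement: it is recorded as a Fact with a citation to Theorem~3.2.11 of Engler--Prestel and is used as a black box thereafter. So there is nothing to compare against in the paper proper.

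Your argument is correct. The case analyses on $v(b)$ go through as written: for $f(X)=X^p+aX-a$ with $v(a)>0$, the possibilities $v(b)<0$ and $v(b)=0$ each force a strict inequality between $v(b^p)$ and $v(a(1-b))$, leaving $v(b)>0$ and hence $p\,v(b)=v(a)$; for $g(X)=X^p+cX-a$ with $v(a)=0$ and $v(c)>0$, the possibilities $v(b)\neq 0$ are similarly excluded. The discriminant-reduction step for separable closedness of $k$ is fine because the discriminant is a polynomial with integer coefficients in the coefficients of $f$, so it commutes with the residue map. Your observation that non-triviality is needed only for perfectness of $k$ (and not for divisibility of $vK^\times$) is also accurate: with the trivial valuation the residue field is $K$ itself, which need not be perfect.

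For what it is worth, the approach you take --- perturbing the inseparable $X^p-a$ to a separable polynomial whose root still witnesses the desired $p$-divisibility or $p$-th root in the residue field --- is precisely the standard device used in the cited reference, so had the paper included a proof it would have looked essentially like yours.
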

\subsection{Profinite groups}

Given a field $K$, the absolute Galois group $\Gal(K):=\Gal(K^\sep/K)$ of $K$ 
is naturally a profinite group. There is also a version of Galois correspondence 
for the Galois extension $K^\sep/K$.
For a positive integer, we use $\hat{F}_e$ to denote the {\bf free profinite group}
of rank $e$, and $\hat{F}_\omega$ to denote the free profinite 
group of rank $\aleph_0$.
For a profinite group $G$, we use $\Im(G)$ to denote the set of all 
ismorphism types of finite quotients of $G$.

\begin{convention}
	Unless otherwise mentioned, 
	morphisms between profinite finite groups in this article are always
	assumed to the continuous.
\end{convention}

\begin{definition}
	[see \cite{MR2599132}]
	Let $G_i$, $i=1, \ldots, n$, be a finite collection of profinite groups. 
	A {\bf free profinite product} of these groups  consists of a profinite 
	group $G$ and continuous homomorphisms $\varphi_i: G_i\to G$, 
	$i=1, \ldots, n$, satisfying the following universal property:
	$$
	\xymatrix{
		G \ar@{.>}[dr]^{\psi}&  \\
		G_i \ar[u]^{\varphi_i} \ar[r]_{\psi_i} & K	
		}
	$$
	for any profinite group $K$ and any continuous homomorphisms
	$\psi_i : G_i \to K$,  $i=1, \ldots, n$, there is a unique 
	continuous homomorphism $\psi: G\to K$ such that $\phi_i=\psi \varphi_i$
	for all $i=1, \ldots, n$. The free profinite product thus defined always 
	exists and is unique up to isomorphism, usually denoted by 
	$$ G = \coprod_{i=1}^n G_i.$$
\end{definition}
\begin{fact}
	[Corollary 9.1.4 of \cite{MR2599132}]
	Let $G_1, \ldots, G_n$ be profinite groups and $G=G_1\coprod \cdots \coprod
	G_n$ be their free profinite product. Then the following conclusions hold:
	\begin{enumerate}
		\item the natural homomorphisms $\varphi_j: G_j\to \coprod_{i=1}^n
			G_i$, $j=1, \ldots, n$ are monomorphisms; 
		\item $G=\overline{
				\left<\varphi_i(G_i)\mid i=1, \ldots, n\right>
				}$.
	\end{enumerate}
\end{fact}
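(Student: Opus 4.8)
The plan is to derive both statements directly from the universal property of the free profinite product, together with the standard fact that a closed subgroup $H$ of a profinite group $G$ equals $G$ exactly when the image of $H$ in $G/N$ is all of $G/N$ for every open normal subgroup $N$ of $G$.

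For the injectivity asserted in part (1), I would fix $j \in \{1, \ldots, n\}$ and a nontrivial element $g \in G_j$. Since $G_j$ is profinite, choose a continuous homomorphism $q \colon G_j \to F$ onto a finite group $F$ with $q(g) \ne 1$. Define continuous homomorphisms $\psi_i \colon G_i \to F$ by setting $\psi_j := q$ and $\psi_i := 1$ (the trivial homomorphism) for $i \ne j$. The universal property produces a continuous homomorphism $\psi \colon G \to F$ with $\psi \circ \varphi_i = \psi_i$ for all $i$; in particular $\psi(\varphi_j(g)) = q(g) \ne 1$, so $\varphi_j(g) \ne 1$. Hence $\ker \varphi_j$ is trivial. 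As $G_j$ is compact and $G$ is Hausdorff, $\varphi_j$ is also a closed map, so it is a topological embedding and therefore a monomorphism of profinite groups.

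For part (2), put $H := \overline{\langle \varphi_i(G_i) \mid i = 1, \ldots, n \rangle}$, let $N$ be an arbitrary open normal subgroup of $G$ with quotient map $\pi \colon G \to G/N$, and let $\bar{H} \le G/N$ be the finite subgroup generated by the images $\pi(\varphi_i(G_i))$; note that $\bar{H}$ is precisely the image $\pi(H)$. For each $i$ the homomorphism $\pi \circ \varphi_i$ factors as $\iota \circ \psi_i$, where $\iota \colon \bar{H} \hookrightarrow G/N$ is the inclusion and $\psi_i \colon G_i \to \bar{H}$ is continuous. Applying the universal property to the family $(\psi_i)$ with target $\bar{H}$ yields a continuous homomorphism $\psi \colon G \to \bar{H}$ with $\psi \circ \varphi_i = \psi_i$ for every $i$. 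Then $\iota \circ \psi \colon G \to G/N$ agrees with $\pi$ on each $\varphi_i(G_i)$, so by the uniqueness clause of the universal property, now applied with target $G/N$, we get $\iota \circ \psi = \pi$. Since the image of $\iota \circ \psi$ lies inside $\bar{H}$, this forces $\bar{H} = G/N$, i.e.\ $\pi(H) = G/N$. As $N$ was arbitrary, the closed subgroup $H$ equals $G$.

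The only real work here is bookkeeping: arranging the corestrictions so that both invocations of the universal property apply, and recalling the density criterion for closed subgroups of profinite groups. No genuinely difficult step is involved, which matches the fact that the statement is quoted as a known result.
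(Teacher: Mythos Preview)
Your argument is correct. Both parts are handled by clean applications of the universal property: for (1) you separate a nontrivial element via a finite quotient of the factor, and for (2) you use uniqueness to force every finite quotient of $G$ to be generated by the images of the factors, then invoke the standard density criterion for closed subgroups of profinite groups.

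There is nothing to compare against in the paper itself: the statement is recorded as a Fact with a citation to Corollary~9.1.4 of Ribes--Zalesskii and is given no proof here. Your proof is essentially the textbook argument one finds in that reference, so there is no meaningful divergence to discuss.
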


\begin{fact}
	[Theorem 9.1.12 of \cite{MR2599132}]
	\label{selfnormalizing-component-of-free-product}
	Let $G_1, \ldots, G_n$ be profinite groups and let $G=G_1\coprod
	\cdots \coprod G_n$ be their free profinite product. Then $N_G(G_i)=G$ for 
	all $i=1,\ldots, n$.
\end{fact}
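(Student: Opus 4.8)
\emph{Proof idea.} We may assume each $G_i$ is nontrivial, since a trivial factor contributes nothing to a free profinite product; the assertion is then that each $G_i$ is self-normalizing in $G$, i.e.\ $N_G(G_i)=G_i$. The plan is to make $G$ act on a profinite tree in which $G_i$ occurs as a vertex stabilizer, and to show that this vertex is the \emph{only} one fixed by $G_i$; any element normalizing $G_i$ must then fix that vertex and hence lie in $G_i$.

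Concretely, first I would present $\coprod_{i=1}^{n}G_i$ as the profinite fundamental group of the star-shaped finite graph of profinite groups with a central vertex carrying the trivial group, $n$ leaves carrying $G_1,\dots,G_n$, and all edge groups trivial; the profinite Bass--Serre theory developed in \cite{MR2599132} then furnishes a profinite tree $T$ with a $G$-action together with distinguished vertices $v_1,\dots,v_n$ such that $\mathrm{Stab}_G(v_i)=\varphi_i(G_i)$, which I identify with $G_i$, while every edge of $T$ has trivial $G$-stabilizer since every edge group is trivial. Next I would examine the fixed-point set $\mathrm{Fix}_T(G_i)$: it contains $v_i$, it is a profinite subtree of $T$ by the structure theory of closed subgroups acting on profinite trees, and it cannot contain an edge (an edge fixed by $G_i$ would force $G_i=1$), so a nonempty edgeless profinite subtree collapses to the single vertex $v_i$. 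The conclusion is then immediate: if $g\in N_G(G_i)$, then for every $a\in G_i$ one has $a(gv_i)=g\bigl((g^{-1}ag)v_i\bigr)=gv_i$, since $g^{-1}ag\in G_i$ fixes $v_i$; hence $gv_i\in\mathrm{Fix}_T(G_i)=\{v_i\}$, so $g\in\mathrm{Stab}_G(v_i)=G_i$, giving $N_G(G_i)\subseteq G_i$, and the reverse inclusion is trivial.

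The main obstacle is precisely the black-boxed input: that the free profinite product really is the profinite fundamental group of this graph of groups with the standard tree as described, that fixed-point sets of closed subgroups on profinite trees are profinite subtrees, and that a nonempty profinite subtree without edges is a point. These are substantive facts from the profinite analogue of Bass--Serre theory, and in a write-up I would cite \cite{MR2599132} rather than reconstruct them. A more elementary-looking alternative would be to write $G=\varprojlim_{(V_i)}\coprod_i(G_i/V_i)$ and use that for finite factors the free profinite product is the profinite completion of the abstract free product $G_1\ast\cdots\ast G_n$, in which each factor is classically self-normalizing; but one then has to control normalizers along the inverse limit and, worse, transfer the self-normalizing property across profinite completion --- which amounts to a subgroup-separability statement for free products of finite groups --- so I expect the tree argument to be the cleaner one to carry out.
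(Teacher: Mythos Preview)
The paper does not supply a proof of this statement; it is quoted as a fact from the literature (Theorem 9.1.12 of Ribes--Zalesskii), so there is no ``paper's own proof'' to compare against. You correctly noticed that the statement as printed contains a typo: it should read $N_G(G_i)=G_i$, not $N_G(G_i)=G$, and indeed the paper uses it in this corrected sense (``$H$ is self-normalizing in $G$'') in the proof of the non-quantifier-elimination theorem in Section~\ref{section-lack-of-QE}.

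Your outlined argument via profinite Bass--Serre theory is sound and is essentially the route taken in the cited source: realize the free profinite product as acting on its standard profinite tree with the $G_i$ as vertex stabilizers and trivial edge stabilizers, observe that the fixed-point set of a nontrivial $G_i$ is a profinite subtree containing no edge and hence reduces to the single vertex $v_i$, and conclude that any $g$ normalizing $G_i$ must send $v_i$ to a vertex fixed by $G_i$, i.e.\ to $v_i$ itself, so $g\in G_i$. The black boxes you flag (that the free profinite product is the profinite fundamental group of the star graph of groups with trivial edge groups, that fixed-point sets of closed subgroups are profinite subtrees, and that a nonempty edgeless profinite subtree is a point) are precisely the ingredients developed in Chapter~9 of Ribes--Zalesskii leading up to this theorem, so citing them is appropriate. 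Your alternative sketch via inverse limits of abstract free products of finite groups is also viable but, as you note, requires controlling normalizers through the profinite completion, which ultimately rests on the same separability-type input; the tree argument is indeed the cleaner one.
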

\begin{fact}[Prop.~17.6.2 in \cite{MR2445111}]
	\label{open-subgroups-of-free-profinite-groups}
	Let $\hat{F}$
	be a free profinite group, and $\hat{H}$ an open subgroup of $\hat{F}$.
	Suppose that $\hat{H}$ is profinite. Then $\hat{H}$ is a
	free profinite group. Moreover, if the rank $e$ of $\hat{F}$ is 
	finite, then the rank of $\hat{H}$ is $1+(\hat{F}:\hat{H})(e-1)$; 
	if the rank of $\hat{F}$ is infinite, then $\hat{H}$ and $\hat{F}$ have 
	the same rank.
\end{fact}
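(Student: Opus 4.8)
The plan is to handle the finite-rank and infinite-rank cases separately: the former by reducing to the classical Nielsen--Schreier theorem via profinite completion, the latter via Iwasawa's criterion for recognising free profinite groups.

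Assume first $e<\infty$. Let $F$ be the abstract free group on a set $X$ of cardinality $e$ and $\widehat{F}$ its profinite completion, so that $F$ is a dense subgroup of $\widehat{F}$. For any finite group $G$ there are natural bijections $\mathrm{Hom}_{\mathrm{cont}}(\widehat{F},G)\cong\mathrm{Hom}(F,G)\cong G^{X}$, which is precisely the universal property characterising the free profinite group of rank $e$; hence $\widehat{F}\cong\hat{F}$ and we identify the two. Next I would set up the standard dictionary between subgroups: if $\hat{H}\le\hat{F}$ is open then $H:=\hat{H}\cap F$ is dense in $\hat{H}$ (as $F$ is dense and $\hat{H}$ is open), so $\overline{H}=\hat{H}$ and a coset count gives $(F:H)=(\hat{F}:\hat{H})<\infty$; conversely the closure of a finite-index subgroup of $F$ is an open subgroup of $\hat{F}$ of the same index. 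The crucial step is to identify the open subgroup $\hat{H}=\overline{H}$ with the profinite completion $\widehat{H}$ of the \emph{abstract} group $H$: one checks that the subgroups $H\cap M$, for $M\trianglelefteq F$ of finite index, are cofinal among all finite-index normal subgroups of $H$ --- given $N\trianglelefteq H$ of finite index, $N$ has finite index in $F$, so its $F$-core $M=\bigcap_{g\in F}gNg^{-1}$ is finite-index normal in $F$ with $H\cap M\subseteq N$ --- whence $\overline{H}\cong\widehat{H}$. By the classical Nielsen--Schreier theorem $H$ is free of rank $1+(F:H)(e-1)$, and by the universal-property computation its completion $\hat{H}=\widehat{H}$ is the free profinite group of that rank, i.e.\ of rank $1+(\hat{F}:\hat{H})(e-1)$.

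Now assume $e$ is infinite. Iwasawa's criterion says that a profinite group is free of countably infinite rank exactly when it has at most countable rank and every finite embedding problem over it is solvable, and there is an analogous statement in higher ranks; so I would verify these conditions for $\hat{H}$. That $\hat{H}$ has the same rank as $\hat{F}$ is elementary, since an open subgroup of a profinite group of infinite rank $e$ again has rank $e$. Given a finite embedding problem $\alpha\colon\hat{H}\to A$ and $\beta\colon B\twoheadrightarrow A$ with $B$ finite, one induces from it a finite embedding problem over $\hat{F}$ along the open inclusion $\hat{H}\le\hat{F}$, solves the latter (possible because $\hat{F}$ is free), and restricts the solution to one of the original problem over $\hat{H}$; hence $\hat{H}$ is free of rank $e$.

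I expect the main obstacle in the finite-rank case to be the cofinality claim, i.e.\ verifying that $\hat{F}$ induces on $H$ precisely the full profinite topology of $H$; the rest there is bookkeeping plus the classical Nielsen--Schreier theorem. In the infinite-rank case the delicate point is the construction of the induced embedding problem over $\hat{F}$ and the check that its solutions restrict to solutions of the given one over $\hat{H}$.
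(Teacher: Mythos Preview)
The paper does not supply its own proof of this statement: it is recorded as a \emph{Fact} with a citation to Fried--Jarden, \textit{Field Arithmetic}, Prop.~17.6.2, and is never argued in the text (indeed it is not even invoked elsewhere in the paper). There is therefore nothing to compare your proposal against here.

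That said, your outline is essentially the standard one and is sound in the finite-rank case. The identification of $\hat F$ with the profinite completion of the abstract free group $F$, the cofinality argument showing that the topology induced on $H=\hat H\cap F$ from $\hat F$ is the full profinite topology of $H$, and the appeal to Nielsen--Schreier followed by completion are all correct; your verification of cofinality (take the $F$-core of a finite-index $N\trianglelefteq H$) is exactly the right step.

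In the infinite-rank case your plan is correct in spirit but hides a genuine subtlety. Iwasawa's criterion, in the form you quote, characterises $\hat F_\omega$; for \emph{uncountable} rank $m$ the analogous characterisation (due to Mel'nikov) requires not merely solvability of every finite embedding problem but that each nontrivial finite embedding problem have exactly $m$ solutions. Your induction-of-embedding-problems sketch can be made to yield this stronger conclusion, but it is not automatic, and simply saying ``there is an analogous statement in higher ranks'' glosses over precisely the point where the work lies. If you intend to carry this out, you should either (a) restrict explicitly to the countable-rank case and invoke Iwasawa, or (b) spell out the coinduction/twisted-wreath construction that produces, from one solution over $\hat F$, the required $m$ distinct solutions over $\hat H$.
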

\begin{definition}
	A profinite group $G$ is {\bf projective} if 
	for every diagram 
	$$
	\xymatrix{
		& G \ar[d]^{\varphi} \\
		B \ar[r]_{\alpha} & A 
		}
	$$ 
	where $\varphi$ and $\alpha$ are epimorphisms of profinite groups, 
	there exists a homomorphism $\gamma: G\to B$ such that the following 
	diagram commutes
	$$
	\xymatrix{
		& G \ar[d]^{\varphi} \ar[dl]_{\gamma} \\
		B \ar[r]_{\alpha} & A 
		}
	$$ 
\end{definition}
\begin{fact}
	[Prop.~22.4.10 of \cite{MR2445111}]
	\label{free-product-of-projective-groups}
	The free profinite product two projective profinite groups is again
	projective.
\end{fact}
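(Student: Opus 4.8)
The plan is to verify projectivity of $G := G_1 \coprod G_2$ directly from the definition, solving an arbitrary embedding problem factor by factor and then reassembling the partial solutions by the universal property of the free profinite product. Write $\varphi_1,\varphi_2$ for the structural homomorphisms $G_i \to G$, and suppose given an embedding problem: epimorphisms of profinite groups $\mu\colon G \to A$ and $\alpha\colon B \to A$. We must produce a continuous homomorphism $\nu\colon G \to B$ with $\alpha\circ\nu = \mu$.

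First I would pull the problem back to each factor. Set $\mu_i := \mu\circ\varphi_i\colon G_i \to A$ for $i=1,2$. Since $G_i$ is compact and $A$ Hausdorff, $A_i := \mu_i(G_i)$ is a closed subgroup of $A$, hence profinite, and $B_i := \alpha^{-1}(A_i)$ is a closed (hence profinite) subgroup of $B$ on which $\alpha$ restricts to an epimorphism $\alpha_i\colon B_i \to A_i$; moreover $\mu_i\colon G_i \to A_i$ is an epimorphism. Thus $\bigl(\mu_i\colon G_i\to A_i,\ \alpha_i\colon B_i\to A_i\bigr)$ is a genuine embedding problem for $G_i$, so by projectivity of $G_i$ there is a homomorphism $\nu_i\colon G_i \to B_i \subseteq B$ with $\alpha\circ\nu_i = \mu_i$. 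This preliminary reduction to surjective maps is the only place where one momentarily steps outside the universal property, and it is routine.

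Next I would glue. Applying the universal property of $G = G_1\coprod G_2$ to the target $B$ and the homomorphisms $\nu_i\colon G_i \to B$ yields a unique continuous homomorphism $\nu\colon G \to B$ with $\nu\circ\varphi_i = \nu_i$ for $i=1,2$. For each $i$ we then have $\alpha\circ\nu\circ\varphi_i = \alpha\circ\nu_i = \mu_i = \mu\circ\varphi_i$, so the continuous homomorphisms $\alpha\circ\nu$ and $\mu$ from $G$ to $A$ agree on $\varphi_1(G_1)\cup\varphi_2(G_2)$. Since this set topologically generates $G$ (Corollary 9.1.4 of \cite{MR2599132}, recalled above), it follows that $\alpha\circ\nu = \mu$; hence $\nu$ solves the embedding problem and $G$ is projective. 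The identical argument handles a free profinite product of any finite family of projective groups, so the restriction to two factors is merely for convenience.

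I do not expect a real obstacle here: the statement is essentially a formal consequence of the universal property, of the density of $\langle\varphi_i(G_i)\rangle$ in $G$, and of the continuity of all maps in sight. The mildly delicate point is the passage to $A_i$ and $B_i$ so that the restricted problems become honest embedding problems; an alternative that sidesteps even this is to use the characterization of projectivity by cohomological dimension $\le 1$ together with the additivity of Galois cohomology over free profinite products, but the direct route above is shorter and uses only tools already at hand.
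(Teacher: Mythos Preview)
The paper does not supply its own proof of this statement: it is recorded as a Fact with a citation to Fried--Jarden and left at that. Your argument is correct and is essentially the standard one. One minor streamlining: once you have $\alpha\circ\nu\circ\varphi_i = \mu\circ\varphi_i$ for $i=1,2$, the uniqueness clause of the universal property of the free profinite product already forces $\alpha\circ\nu = \mu$, so the appeal to topological generation (Corollary~9.1.4) is not strictly needed, though of course it gives the same conclusion.
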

\begin{fact}
	[Gasch\"utz, Lemma 17.7.2 of \cite{MR2445111}]
	\label{Gaschutzs-lemma}
	Let $\pi: G\to H$ be an epimorphism of profinite groups with 
	the rank of $G$ not more than a positive integer $e$. Let $h_1, \ldots,
	h_e$ be a system of generators of $H$. Then there exists a system of
	generators $g_1, \ldots, g_e$ of $G$ such that $\pi(g_i)=h_i$ for all 
	$i=1, \ldots, e$.
\end{fact}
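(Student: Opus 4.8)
The plan is to deduce this (Gasch\"utz's lemma) from its finite-group version by a compactness argument, and to prove the finite version by the classical counting trick.

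For the reduction, set $K:=\ker(\pi)$, a closed normal subgroup of $G$, and -- using surjectivity of $\pi$ -- fix once and for all elements $g_1^*,\dots,g_e^*\in G$ with $\pi(g_i^*)=h_i$. It suffices to find $k_1,\dots,k_e\in K$ such that $g_1^*k_1,\dots,g_e^*k_e$ topologically generate $G$; the required generators are then $g_i:=g_i^*k_i$. For each open normal subgroup $M\trianglelefteq G$ put
\[
  \Gamma_M:=\bigl\{(k_1,\dots,k_e)\in K^e : \langle g_1^*k_1,\dots,g_e^*k_e\rangle\, M=G\bigr\}.
\]
Since replacing $k_i$ by $k_im_i$ with $m_i\in K\cap M$ does not change the cosets $g_i^*k_iM$, each $\Gamma_M$ is a union of cosets of the open subgroup $(K\cap M)^e$, hence clopen in the compact space $K^e$; moreover $\Gamma_{M_1}\cap\dots\cap\Gamma_{M_r}\supseteq\Gamma_{M_1\cap\dots\cap M_r}$. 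Therefore, granting that each $\Gamma_M$ is nonempty, the family $\{\Gamma_M\}$ has the finite intersection property and compactness yields a tuple $(k_i)\in\bigcap_M\Gamma_M$; for such a tuple the closure of $\langle g_1^*k_1,\dots,g_e^*k_e\rangle$ equals $\bigcap_M\langle g_1^*k_1,\dots,g_e^*k_e\rangle M=G$, as wanted. So everything reduces to showing $\Gamma_M\neq\emptyset$ for each open normal $M$.

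Fix $M$ and pass to the finite group $\bar G:=G/M$. Let $\bar K$ be the image of $K$, a normal subgroup of $\bar G$; then $\bar G/\bar K\cong G/KM$ is a finite quotient of $H$ in which the images $\bar h_i$ of the $h_i$ still form a generating tuple, and $\operatorname{rank}(\bar G)\le\operatorname{rank}(G)\le e$ because quotients do not increase the rank. The fibre of $\bar G\to\bar G/\bar K$ over $\bar h_i$ is precisely $\{\,g_i^*kM : k\in K\,\}$, so a generating $e$-tuple of $\bar G$ lying over $(\bar h_1,\dots,\bar h_e)$ is exactly a tuple $(g_i^*k_iM)_i$ with $(k_i)\in\Gamma_M$. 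Hence $\Gamma_M\neq\emptyset$ is a special case of the finite statement: \emph{if $G_0$ is a finite group of rank $\le e$ with a normal subgroup $K_0$, then above every generating $e$-tuple of $G_0/K_0$ there lies a generating $e$-tuple of $G_0$.} To prove this, drop subscripts and write $G$ for the finite group, $K\trianglelefteq G$, $\bar G:=G/K$, with $\operatorname{rank}(G)\le e$. For $\mathbf h=(h_1,\dots,h_e)\in\bar G^e$ and a subgroup $U\le G$ with $UK=G$, let $n_U(\mathbf h)$ be the number of $e$-tuples $(u_1,\dots,u_e)\in U^e$ that generate $U$ and satisfy $u_iK=h_i$ for all $i$. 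Sorting the $|K|^e$ lifts of $\mathbf h$ by the subgroup they generate -- the image of such a subgroup $U$ in $\bar G$ is $\langle h_1,\dots,h_e\rangle$, so $UK=G$ whenever $\mathbf h$ generates $\bar G$ -- gives
\[
  |K|^e=\sum_{\substack{U\le G\\ UK=G}} n_U(\mathbf h).
\]
Now one proves by induction on $|G|$ that $n_G(\mathbf h)$ is independent of the choice of generating tuple $\mathbf h$ of $\bar G$. The case $|G|=1$ is trivial; for $|G|>1$, every proper $U\subsetneq G$ in the sum satisfies $U/(U\cap K)\cong UK/K=\bar G$, so a generating tuple of $\bar G$ is the same thing as a generating tuple of $U/(U\cap K)$, and the inductive hypothesis applied to $(U,U\cap K)$ makes $n_U(\mathbf h)$ depend on $U$ only; as $|K|^e$ is also independent of $\mathbf h$, so is the remaining term $n_G(\mathbf h)$. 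Finally, rank $\le e$ gives $G$ a generating $e$-tuple $(g_i)$, whose image $(g_iK)$ generates $\bar G$, so $n_G(g_1K,\dots,g_eK)\ge1$; by the independence just shown, $n_G(\mathbf h)\ge1$ for every generating tuple $\mathbf h$, which is the finite statement.

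The one genuinely substantial step is the finite case, and within it the observation that lets the induction close: a proper subgroup $U$ with $UK=G$ surjects onto $\bar G$ with kernel $U\cap K$ strictly smaller than $K$, so $n_U(\mathbf h)$ is already known to be $\mathbf h$-independent, whence $n_G(\mathbf h)$ -- the ``defect'' in the counting identity -- inherits this independence. The profinite reduction is then routine; the only mild subtlety there is to keep the liftings exact ($\pi(g_i)=h_i$, not merely modulo $M$), which is why it pays to fix the $g_i^*$ at the start and hunt only for the correction terms $k_i\in K$.
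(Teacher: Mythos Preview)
The paper does not prove this statement; it is recorded as a \emph{Fact} with a citation to Lemma~17.7.2 of Fried--Jarden, so there is no in-paper proof to compare against. Your argument is correct and is precisely the classical Gasch\"utz proof one finds in that reference: first the counting identity $|K|^e=\sum_{U\le G,\,UK=G} n_U(\mathbf h)$ together with the induction on $|G|$ (using $U/(U\cap K)\cong \bar G$ for proper $U$) to show $n_G(\mathbf h)$ is independent of the generating tuple $\mathbf h$, then the rank hypothesis to exhibit one $\mathbf h$ with $n_G(\mathbf h)\ge 1$; the lift to profinite groups via the compact sets $\Gamma_M\subseteq K^e$ and the finite intersection property is the standard inverse-limit reduction. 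One small stylistic remark: your induction statement is cleanest if you drop the rank hypothesis from it entirely (the independence of $n_G(\mathbf h)$ holds for arbitrary finite $G$ and normal $K$), reserving $\operatorname{rank}(G)\le e$ only for the final positivity step---you effectively do this, but it is worth saying explicitly so that the inductive hypothesis applies to every proper $U$ without needing to check $\operatorname{rank}(U)\le e$.
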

\begin{corollary}
	[Proposition 17.7.3 of \cite{MR2445111}]
	\label{corollary-of-Gaschutzs-lemma}
	Let $e$ be a positive integer, $\varphi : \hat{F}_e \to H$ and $\alpha:
	G\to H$ a pair of epimorphisms of profinite groups where
	the rank of $G$ is not more than $e$. Then there exists an epimorphism
	$\gamma : \hat{F}_e\to G$ such that the following diagram commutes.
	$$
	\xymatrix{
		 & \hat{F}_e \ar[dl]_{\gamma} \ar[d]^{\varphi} \\
		G \ar[r]_{\alpha} & H 
		}
	$$

\end{corollary}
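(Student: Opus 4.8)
The plan is to derive this corollary directly from Gasch\"utz's Lemma (Fact~\ref{Gaschutzs-lemma}) together with the universal property of the free profinite group $\hat{F}_e$. First I would fix a free generating set $x_1, \ldots, x_e$ of $\hat{F}_e$. Since $\varphi : \hat{F}_e \to H$ is surjective and continuous, and the $x_i$ generate $\hat{F}_e$ topologically, the elements $h_i := \varphi(x_i)$, $i = 1, \ldots, e$, form a system of (topological) generators of $H$.

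Now the hypothesis that the rank of $G$ is not more than $e$ is precisely what is needed to feed $\alpha : G \to H$ and the generating system $h_1, \ldots, h_e$ of $H$ into Fact~\ref{Gaschutzs-lemma}: it produces a system of generators $g_1, \ldots, g_e$ of $G$ with $\alpha(g_i) = h_i = \varphi(x_i)$ for every $i$. By the universal property of the free profinite group, there is a unique continuous homomorphism $\gamma : \hat{F}_e \to G$ with $\gamma(x_i) = g_i$ for all $i$. Because the $g_i$ topologically generate $G$, the image $\gamma(\hat{F}_e)$ is dense in $G$; it is also closed, being the continuous image of a compact group in a Hausdorff group, so $\gamma$ is an epimorphism. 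Finally, $\alpha \circ \gamma$ and $\varphi$ are continuous homomorphisms out of $\hat{F}_e$ that agree on the topologically generating set $\{x_1, \ldots, x_e\}$, hence $\alpha \circ \gamma = \varphi$, which is exactly the commutativity of the displayed triangle.

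There is no real obstacle here once Gasch\"utz's Lemma is granted; the proof is essentially a repackaging of that lemma. The only points deserving a word of care are the two standard facts used implicitly: that a continuous epimorphism carries a topological generating system onto a topological generating system, and that a continuous homomorphism of profinite groups whose image is dense is automatically surjective by compactness.
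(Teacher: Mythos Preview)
Your proof is correct and is precisely the standard derivation of this corollary from Gasch\"utz's Lemma. The paper does not supply its own proof of this statement; it simply cites it as Proposition~17.7.3 of \cite{MR2445111}, where the argument given is essentially the one you wrote.
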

\subsection{Pseudo-algebraically closed fields and Hilbertian fields}
\begin{definition}
	A field $K$ is {\bf pseudo-algebraically closed} or {\bf PAC} if 
	every (non-empty)
	absolutely irreducible affine algebraic set defined over $K$ has a
	$K$-rational point. In scheme theoretic terms: $K$ is PAC if every 
	geometrically integral $K$-variety has a $K$-point.

	For a positive integer $e$, we say $K$ is {\bf $e$-free} if the absolute 
	Galois group $\Gal(K)$ of $K$ 
	is isomorphic to the free-profinite group 
	$\hat{F}_e$ of rank $e$.

	A field $K$ is {\bf pseudo-finite}, if $K$ is PAC, perfect and $1$-free.
\end{definition}
\begin{fact}
	[Corollary 11.2.5 of \cite{MR2445111}]
	\label{algebraic-extensions-of-PAC}
	Every algebraic extension of a PAC field is PAC.
\end{fact}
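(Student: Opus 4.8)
The plan is to reduce the statement to a finite extension and then to use Weil restriction of scalars together with the generic smoothness of a geometrically integral variety.

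First I would carry out a direct-limit reduction. Let $L/K$ be algebraic with $K$ PAC, fix an algebraic closure $K^\alg\supseteq L$, and let $V\subseteq\mathbb{A}^n_L$ be a nonempty geometrically integral affine variety defined by $f_1,\dots,f_r\in L[X_1,\dots,X_n]$. The subfield $L_1\subseteq L$ generated over $K$ by the coefficients of the $f_i$ is a finite extension of $K$, and the variety $V_1\subseteq\mathbb{A}^n_{L_1}$ cut out by the same equations satisfies $V=V_1\times_{L_1}L$; since $L_1$ and $L$ have the same algebraic closure $K^\alg$, the variety $V_1$ is geometrically integral over $L_1$. Granting the finite case, $L_1$ is PAC, so $V(L)\supseteq V_1(L_1)\neq\varnothing$. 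It remains to treat $[L:K]<\infty$.

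For the finite case, let $V$ be a nonempty geometrically integral affine $L$-variety. Being geometrically integral, $V$ is geometrically reduced, so its $L$-smooth locus is a dense open subvariety; after replacing $V$ by a nonempty basic open inside it, I may assume $V$ is affine, geometrically integral, and smooth over $L$. Now form the Weil restriction $W:=R_{L/K}(V)$, an affine $K$-scheme with the functorial identity $W(K)=V(L)$. Since $\operatorname{Spec} L\to\operatorname{Spec} K$ is finite free and $V\to\operatorname{Spec} L$ is smooth, $W\to\operatorname{Spec} K$ is smooth, so $W$ is geometrically reduced. For geometric irreducibility, write $L\otimes_KK^\alg\cong\prod_jA_j$ with each $A_j$ local Artinian with residue field $K^\alg$; then, compatibly, $W\times_KK^\alg\cong\prod_jR_{A_j/K^\alg}\!\big(V\times_LA_j\big)$, and each factor is smooth over $K^\alg$ and fibres, via $A_j\twoheadrightarrow K^\alg$, over a copy of $V\times_LK^\alg$ with affine spaces as fibres; as $V\times_LK^\alg$ is irreducible, each factor is connected, hence irreducible, and a finite product of irreducible $K^\alg$-schemes is irreducible. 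So $W$ is geometrically integral over $K$ (in particular nonempty), hence $W(K)\neq\varnothing$ because $K$ is PAC, and therefore $V(L)=W(K)\neq\varnothing$. Thus $L$ is PAC.

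The hard part is precisely the geometric irreducibility and reducedness of $W$ in the inseparable case: Weil restriction along an inseparable extension does not preserve reducedness, and it is exactly to sidestep the resulting possibly reducible and non-reduced jet schemes that one must first pass to the smooth locus of $V$, which makes the geometric fibre of $W$ an affine-space bundle over $V\times_LK^\alg$. The remaining ingredients---the direct-limit reduction, the identity $W(K)=V(L)$, and the stability of smoothness under Weil restriction along finite free morphisms---are routine. (One could instead split $L/K$ into its separable part, where the decomposition $W\times_KK^\alg\cong\prod_{\sigma}{}^{\sigma}V$ makes irreducibility immediate, and a purely inseparable part handled by the jet-bundle argument; but doing both at once is cleaner.)
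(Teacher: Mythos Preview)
The paper does not supply its own proof of this statement; it is recorded as a Fact with a citation to Fried--Jarden, Corollary~11.2.5, so there is nothing in the paper to compare your argument against.

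Your proof is correct. The reduction to finite extensions via the finitely many coefficients of the defining equations is routine, and the Weil-restriction step is the standard device for the finite case. You correctly isolate the only genuine difficulty, namely the geometric integrality of $R_{L/K}(V)$ when $L/K$ has an inseparable part, and your remedy---passing first to the smooth locus of $V$ and then decomposing $L\otimes_K K^{\alg}$ as a product of local Artinian $K^{\alg}$-algebras $A_j$, so that each factor $R_{A_j/K^{\alg}}(V\times_L A_j)$ becomes an affine-space bundle over a geometrically irreducible base---is exactly right. One small point of precision: the base of that bundle is $V\times_{L,\sigma_j}K^{\alg}$ for the embedding $\sigma_j\colon L\hookrightarrow K^{\alg}$ attached to $A_j$, not literally $V\times_L K^{\alg}$; this does not affect irreducibility but is worth stating accurately. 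For comparison, the argument in Fried--Jarden treats the separable and purely inseparable parts of a finite extension separately (the former via the product-of-conjugates description of Weil restriction, the latter via Frobenius), whereas your Artinian decomposition handles both at once at the cost of invoking the affine-bundle structure of the Greenberg functor for smooth schemes.
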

\begin{fact}
	[Corollary 11.5.5 of \cite{MR2445111}]
	\label{PAC-residue-field-and-value-group}
	Suppose that $K$ is a PAC field with a non-trivial valuation. 
	Then $K^\sep$ is an immediate extension of $K$. Thus by Fact
	\ref{residue-field-and-value-group-of-SCVF}, 
	the residue field of $K$ is algebraically closed and the value group 
	$K$ is divisible. 
\end{fact}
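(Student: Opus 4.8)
The plan is to first show that $K^\sep/K$ is immediate and then read off the two assertions about $Kv$ and $vK^\times$ from Fact~\ref{residue-field-and-value-group-of-SCVF}. I would begin by fixing, via Fact~\ref{extension-of-valuation-exists}, an extension of $v$ to a valuation $w$ on $K^\alg$; since $w$ restricts to the non-trivial valuation $v$ on $K$, the valuation $w$ and each of its restrictions to an intermediate field stays non-trivial. The essential input is the density theorem quoted in the introduction from \cite{MR2350157}: applied to the geometrically integral $K$-variety $\mathbb{A}^1_K$, it yields that $K$ is dense in $K^\alg$ for the topology induced by $w$; concretely, for every $a\in K^\alg$ and every $\gamma$ in the value group there is $c\in K$ with $w(c-a)>\gamma$.

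The immediateness is then checked element by element by routine ultrametric bookkeeping. For the value group, given $a\in(K^\sep)^\times\subseteq(K^\alg)^\times$ I would pick $c\in K$ with $w(c-a)>w(a)$; the strict triangle inequality forces $w(a)=w(c)\in vK^\times$, so $w(K^\sep)^\times=vK^\times$. For the residue field, given $a$ in the valuation ring of $(K^\sep,w)$ I would pick $c\in K$ with $w(c-a)>0$; then $c$ lies in the valuation ring of $(K,v)$ and has the same residue as $a$, so the embedding $Kv\hookrightarrow K^\sep w$ is onto. Hence $K^\sep/K$ is immediate. Since $(K^\sep,w)$ is separably closed and non-trivially valued, Fact~\ref{residue-field-and-value-group-of-SCVF} gives that $K^\sep w$ is algebraically closed and $w(K^\sep)^\times$ is divisible, and by immediateness these coincide with $Kv$ and $vK^\times$, which completes the argument.

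The one genuinely substantial ingredient is the density theorem \cite{MR2350157} --- it is where all of the PAC hypothesis is used, and I would treat it as a black box here (it is precisely the ``critical tool'' advertised in the introduction). Granting it, the remaining steps are elementary valuation theory; the only points that need a little care are checking that non-triviality survives the extension to $w$ and choosing the approximating neighborhoods correctly in the two density applications.
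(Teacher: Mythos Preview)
The paper does not actually give a proof of this statement: it is recorded as a Fact with a citation to Fried--Jarden, and the only argument in the text is the one-line deduction of the second sentence from the first via Fact~\ref{residue-field-and-value-group-of-SCVF}. Your proposal supplies a complete proof of the first sentence as well, and it is correct: applying Fact~\ref{Kollar-density} to the affine line gives density of $K$ in $K^\alg$ for the $w$-topology, and the two ultrametric computations you sketch then show that $K^\sep/K$ has neither value-group nor residue-field extension.

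This is a different route from the one in the cited source. The result in Fried--Jarden predates Koll\'ar's density theorem and is obtained there by more direct means (showing, roughly, that a PAC field admits no proper Henselian separable algebraic extension). Your approach trades that argument for an appeal to Fact~\ref{Kollar-density}, which the present paper already imports as a black box and uses elsewhere; so within this paper's framework your derivation is economical and internally consistent, at the cost of invoking a theorem that is logically heavier than what is strictly needed for the statement.
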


The following is called {\it the PAC Nullstellensatz}  op. cit.
\begin{fact}
	[Theorem 18.6.1 of \cite{MR2445111}]
	\label{PAC-Nullstellensatz}
	Let $K$ be a countable Hilbertian field and $e$ a positive integer. 
	Then the fixed field of a tuple $\pmb{\sigma}\in \Gal(K)^e$, 
	$K^\sep(\pmb{\sigma})$, is a 
	PAC field for almost all $\pmb{\sigma} \in \Gal(K)^e$, that is, 
	there exists a subset $A\subseteq \Gal(K)^e$ with Haar measure $1$ 
	such that for all $\pmb{\sigma}\in A$, $K^\sep(\pmb{\sigma})$ is 
	PAC.
\end{fact}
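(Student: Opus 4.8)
The plan is to reduce the statement to a single measure-theoretic lemma about specializations, and then to prove that lemma by an ``$S_n$-valued random Galois'' computation fed by a strong form of Hilbert's irreducibility theorem. Two classical facts from \cite{MR2445111} will be used as black boxes: a field $E$ is PAC if and only if every non-constant absolutely irreducible $f \in E[X,Y]$ has a zero in $E^2$; and the function field of an absolutely irreducible affine curve over $E$ is a regular, hence separably generated, extension of $E$ (needed only in characteristic $p$, to pass to a birational plane model in which $f$ is separable in $Y$). It then suffices to prove the following \emph{Core Lemma}: if $K$ is a countable Hilbertian field, $e \ge 1$, and $f \in K[X,Y]$ is absolutely irreducible and separable in $Y$ of degree $n = \deg_Y f \ge 1$, then the set of $\pmb{\sigma} \in \Gal(K)^e$ for which $f$ has no zero in $K^\sep(\pmb{\sigma})^2$ is Haar-null. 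Granting this, the theorem follows by a countability argument. Since $K$ is countable, $K^\sep$ is countable, so there are only countably many pairs $(L,g)$ with $K \subseteq L \subseteq K^\sep$ finite and $g \in L[X,Y]$ absolutely irreducible; each $L$ is again countable and Hilbertian, the set $\Gal(L)^e \subseteq \Gal(K)^e$ is clopen of Haar measure $[L:K]^{-e}$ and carries, after rescaling, the Haar measure of $\Gal(L)^e$, and $K^\sep(\pmb{\sigma}) = L^\sep(\pmb{\sigma})$ for $\pmb{\sigma} \in \Gal(L)^e$. So the Core Lemma applied over $L$ --- through a separable plane model when $g$ is not separable in $Y$ --- makes each set $\{\pmb{\sigma} \in \Gal(L)^e : g \text{ has no zero in } K^\sep(\pmb{\sigma})^2\}$ null. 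Let $A$ be the complement of the union of these countably many null sets; then $A$ has Haar measure $1$, and for $\pmb{\sigma} \in A$ every non-constant absolutely irreducible polynomial over $K^\sep(\pmb{\sigma})$ has its finitely many coefficients in some finite $L$ with $\pmb{\sigma} \in \Gal(L)^e$, hence has a zero in $K^\sep(\pmb{\sigma})^2$, so $K^\sep(\pmb{\sigma})$ is PAC.

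The elementary mechanism behind the Core Lemma is the following. Let $g \in K[Y]$ be separable irreducible of degree $m$, let $M$ be its splitting field and $G = \Gal(M/K) \hookrightarrow S_m$ the Galois group acting transitively on the $m$ roots, and let $k_0$ be a finite Galois subextension of $M/K$ over which $\Gal(M/k_0)$ is still transitive on the roots. Then $g$ has a root in $K^\sep(\pmb{\sigma})$ exactly when $(\res_M\sigma_1, \ldots, \res_M\sigma_e)$ lies in $\bigcup_\beta \mathrm{Stab}_G(\beta)^e$, the union over the roots $\beta$; and transitivity of $\Gal(M/k_0)$ forces $\mathrm{Stab}_G(\beta)\cdot\Gal(M/k_0) = G$, so every point stabilizer surjects onto $\Gal(k_0/K)$. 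Since the restriction maps push the Haar measure of $\Gal(K)$ forward to the uniform measures on the finite quotients, it follows that, conditioned on the restriction $\res_{k_0}\pmb{\sigma}$, the probability that $g$ has \emph{no} root in $K^\sep(\pmb{\sigma})$ is at most $1 - |\Gal(M/k_0)|^{-e} \le 1 - (m!)^{-e}$, uniformly in the conditioning. Consequently, if $g_1, \ldots, g_r \in K[Y]$ are separable irreducible of degree $n$ whose splitting fields $M_1, \ldots, M_r$ all contain a common finite Galois $k_0/K$, are linearly disjoint over $k_0$, and satisfy that each $\Gal(M_i/k_0)$ acts transitively on the $n$ roots, then conditioning on $\res_{k_0}\pmb{\sigma}$ makes the events ``$g_i$ has no root in $K^\sep(\pmb{\sigma})$'' mutually independent, and hence their conjunction has probability at most $(1 - (n!)^{-e})^r$.

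Now take $g_i = f(a_i, Y)$, and for $k_0$ take the algebraic closure $\tilde K$ of $K$ in the splitting field of $f(X,Y)$ over $K(X)$ --- a fixed finite extension of $K$. The key input is a strengthening of Hilbert's irreducibility theorem: \emph{for every $r$ there exist $a_1, \ldots, a_r \in K$ (indeed an infinite sequence) such that each $f(a_i,Y)$ is separable and irreducible over $\tilde K$ of degree $n$ and the corresponding splitting fields $M_1, \ldots, M_r$ (which then contain $\tilde K$) are linearly disjoint over $\tilde K$.} Combining this with the previous paragraph, and using that if $f$ has no zero in $K^\sep(\pmb{\sigma})^2$ then (since $a_i \in K \subseteq K^\sep(\pmb{\sigma})$) $f(a_i,Y)$ has no root in $K^\sep(\pmb{\sigma})$ for every $i$, the set in the Core Lemma is contained in the conjunction of these events and so has Haar measure at most $(1 - (n!)^{-e})^r$ for every $r$; it is therefore null. (With the infinite disjoint sequence, the Borel--Cantelli lemma in the independent direction gives, for almost all $\pmb{\sigma}$, that $f(a_i,Y)$ has a root in $K^\sep(\pmb{\sigma})$ for infinitely many $i$; this is convenient for passing back from a separable plane model to the original $g$, through the finiteness of the exceptional locus of the birational correspondence.)

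The measure computation and the bookkeeping above are soft; I expect the main obstacle to be the strengthened Hilbert irreducibility theorem. The natural approach is induction on $k$: having chosen $a_1, \ldots, a_k$ with compositum $M_{\le k}$, which is a Hilbertian field (a finite extension of $K$), one uses that $f$ remains absolutely irreducible --- hence irreducible over $M_{\le k}(X)$ --- and applies Hilbert irreducibility over $M_{\le k}$ to produce $a_{k+1}$. The delicate point is to arrange simultaneously that $a_{k+1}$ lies in $K$ itself, not merely in the growing Hilbertian compositum, and that the new splitting field is linearly disjoint from $M_{\le k}$ over $\tilde K$; doing this requires the full Hilbert-subset formalism --- that finite extensions of Hilbertian fields are Hilbertian, together with the stability of Hilbert subsets under intersection and base change. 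A milder, characteristic-$p$-only obstacle is the opening reduction to an $f$ separable in $Y$, which rests on the regularity of the function field of an absolutely irreducible curve.
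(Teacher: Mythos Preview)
The paper does not prove this statement; it is quoted as a known result (Theorem 18.6.1 of Fried--Jarden's \emph{Field Arithmetic}) and used as a black box, so there is no in-paper proof to compare your proposal against.

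That said, your outline is essentially the standard proof found in that reference: the reduction to absolutely irreducible plane curves separable in $Y$, the countability argument ranging over finite subextensions $L/K$ and polynomials with coefficients in $L$, the independence/Borel--Cantelli estimate via point-stabilizers in the Galois groups of specializations, and the identification of the strengthened Hilbert irreducibility theorem (producing infinitely many $K$-rational specializations with splitting fields linearly disjoint over the field of constants $\tilde K$) as the main technical input are exactly the ingredients of the Fried--Jarden argument. Your self-diagnosis of the delicate points---arranging $a_{k+1}\in K$ while working over the growing compositum $M_{\le k}$, and the characteristic-$p$ passage to a separable plane model---is also accurate; both are handled in \cite{MR2445111} by the stability properties of Hilbert sets under finite extension and intersection. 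No substantive gap is visible in your sketch.
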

\begin{fact}
	[Theorem 18.5.6 of \cite{MR2445111}]
	\label{so-called-The-Free-Generator-Theorem}
	Let $K$ be a Hilbertian field and $e$ a positive integer. Then
	$\left<\sigma_1,\ldots, \sigma_e\right>\cong \hat{F}_e$ 
	for almost all $(\sigma_1, \ldots, \sigma_e)\in \Gal(K)^e$. 
\end{fact}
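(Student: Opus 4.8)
This is the classical Free Generator Theorem, quoted here; the following is the approach I would take to prove it. Write $\bar G_{\pmb\sigma}:=\overline{\langle\sigma_1,\ldots,\sigma_e\rangle}\le\Gal(K)$ and $M_{\pmb\sigma}:=K^\sep(\pmb\sigma)$ for its fixed field, so that $\bar G_{\pmb\sigma}=\Gal(M_{\pmb\sigma})$; being topologically generated by $e$ elements, $\bar G_{\pmb\sigma}$ has rank $\le e$, and the assignment $\hat x_i\mapsto\sigma_i$ gives a canonical epimorphism $\hat F_e\twoheadrightarrow\bar G_{\pmb\sigma}$ which is an isomorphism precisely when $\bar G_{\pmb\sigma}\cong\hat F_e$. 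The plan is to reduce the latter to the solvability of embedding problems via the Iwasawa-type criterion of finite rank: a profinite group $F$ with $\mathrm{rank}(F)\le e$ is isomorphic to $\hat F_e$ if and only if every finite embedding problem $(\varphi\colon F\twoheadrightarrow A,\ \alpha\colon B\twoheadrightarrow A)$ with $\mathrm{rank}(B)\le e$ admits a proper solution $\gamma\colon F\twoheadrightarrow B$ with $\alpha\gamma=\varphi$. The ``only if'' direction here is exactly Corollary~\ref{corollary-of-Gaschutzs-lemma} applied to $F=\hat F_e$; the ``if'' direction is an inverse-limit construction of an isomorphism $\hat F_e\to F$ along cofinal towers of finite quotients, refining on the $\hat F_e$-side by Gasch\"utz's Lemma~\ref{Gaschutzs-lemma} and on the $F$-side by the hypothesis. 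So it suffices to show that for Haar-almost-all $\pmb\sigma$, every finite embedding problem of rank $\le e$ for $\bar G_{\pmb\sigma}$ is properly solvable.

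Every such embedding problem is defined over $K$. If $\varphi\colon\bar G_{\pmb\sigma}\twoheadrightarrow A$, then $A=\Gal(L/M_{\pmb\sigma})$ for a finite Galois $L\subseteq K^\sep$, and choosing a finite Galois $N/K$ with $L\subseteq NM_{\pmb\sigma}$ identifies $A$ with the subgroup $\langle\sigma_1|_N,\ldots,\sigma_e|_N\rangle$ of $G:=\Gal(N/K)$; the problem is thus encoded by data $(N;\,B\xrightarrow{\alpha}A\hookrightarrow G)$ living over $K$, with only the embedding $A\hookrightarrow G$ depending on $\pmb\sigma$, the relevant rank bounds on the groups appearing over $M_{\pmb\sigma}$ being controlled by the Nielsen--Schreier formula in Fact~\ref{open-subgroups-of-free-profinite-groups}. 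For a countable Hilbertian $K$ there is then only a countable family $\mathcal F$ of such problems up to isomorphism; writing $X_\Pi$ for the set of $\pmb\sigma$ for which $\Pi\in\mathcal F$ is induced (i.e.\ the $\sigma_i|_N$ generate the prescribed $A$) but not properly solvable for $\bar G_{\pmb\sigma}$, one has $\{\pmb\sigma:\bar G_{\pmb\sigma}\not\cong\hat F_e\}\subseteq\bigcup_{\Pi\in\mathcal F}X_\Pi$, and it is enough that $\mu(X_\Pi)=0$ for each $\Pi$. (For uncountable Hilbertian $K$ one first descends the finite data of $\Pi$ to a finitely generated, hence Hilbertian, subfield; I omit this routine reduction.)

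The heart of the matter --- and the step I expect to be the main obstacle --- is the estimate $\mu(X_\Pi)=0$: solving the induced embedding problem over the ``generic'' field $M_{\pmb\sigma}$, which is precisely where Hilbertianity of $K$ enters. Writing $H:=\ker\alpha$, one forms over $K$ an auxiliary \emph{split} embedding problem whose kernel is a suitable twisted power of $H$ and whose quotient is $G=\Gal(N/K)$; a reduction --- absorbing the non-split twisting of $\alpha$ into the projectivity of $\bar G_{\pmb\sigma}$, which holds for almost all $\pmb\sigma$ over a countable Hilbertian $K$ since $M_{\pmb\sigma}$ is then PAC by Fact~\ref{PAC-Nullstellensatz} --- lets one assume the embedding problem to be solved is this split one. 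Hilbertianity is now used to realize it over $K$: one produces a regular realization over $N(t)$ (here $N$, being a finite extension of the Hilbertian field $K$, is again Hilbertian) and specializes $t$ to a suitable element of $K$ by Hilbert's irreducibility property, obtaining a finite Galois $\hat N/K$ with $N\subseteq\hat N$ carrying the prescribed Galois structure. Finally, for $\pmb\sigma$ with $\langle\sigma_i|_N\rangle=A$ the subgroup $\langle\sigma_i|_{\hat N}\rangle$ surjects onto $A$, and Gasch\"utz's Lemma~\ref{Gaschutzs-lemma} shows that it solves the original problem for $\bar G_{\pmb\sigma}$ unless the $\sigma_i|_{\hat N}$ fail to generate a certain finite group --- a non-generation condition which, after conditioning on the (positive-measure) value of $\langle\sigma_i|_N\rangle$, carves out a Haar-null set. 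The delicate technical points are the production and simultaneous specialization of the regular realizations, and the measure-theoretic bookkeeping that turns ``proper subgroup'' into ``measure zero''; granting these, summing over $\mathcal F$ (and descending in the uncountable case) completes the proof.
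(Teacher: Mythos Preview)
The paper does not prove this statement: it is recorded as a \textbf{Fact} with a citation to Theorem~18.5.6 of Fried--Jarden and is used only as a black box (in the proof that $\PACVF_{p,d}$ fails quantifier elimination). There is therefore no proof in the paper to compare your proposal against.

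As for the proposal itself, the overall architecture---Iwasawa's rank-$e$ characterization of $\hat F_e$, enumeration of finitely many embedding-problem types over a countable base, Hilbertian realization of each, and a Haar-measure/Borel--Cantelli count---matches the standard Fried--Jarden argument. One step is muddled, however: your reduction from non-split to split embedding problems by ``absorbing the non-split twisting of $\alpha$ into the projectivity of $\bar G_{\pmb\sigma}$'' via Fact~\ref{PAC-Nullstellensatz}. Projectivity only yields a \emph{weak} lift $\bar G_{\pmb\sigma}\to B$, not a surjective one, so it does not by itself reduce proper solvability of a non-split problem to that of a split one; moreover, importing the PAC Nullstellensatz here makes the argument unnecessarily heavy and logically awkward (in Fried--Jarden the two theorems are independent, and one does not want the Free Generator Theorem to rest on PAC). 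The textbook route---which your phrase ``twisted power of $H$'' seems to gesture at---is to \emph{dominate} the given problem by a split one over $K$ using a twisted wreath-product construction, solve that via Hilbertianity, and then project down; this needs no hypothesis on $\bar G_{\pmb\sigma}$ at all. If you replace the projectivity appeal by that domination step, your outline becomes the standard proof.
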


\begin{fact}
	[Theorem 18.10.2 of \cite{MR2445111}]	
	\label{density-theorem-for-smallest-Galois-closure-PAC}
	Let $K$ be a countable Hilbertian field and $e$ a positive integer. Then
	the maximal Galois extension of $K$ in $K^\sep(\pmb{\sigma})$, 
	denoted by $K^\sep[\pmb{\sigma}]$,
	is PAC for almost all $\pmb{\sigma}\in \Gal(K)^e$.
\end{fact}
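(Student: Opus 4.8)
The plan is to exhibit the set of ``bad'' tuples --- those $\pmb{\sigma}$ for which $K^{\sep}[\pmb{\sigma}]$ fails to be PAC --- as a countable union of Haar-null sets. I first fix the Galois dictionary. Let $N_{\pmb{\sigma}}$ be the smallest closed normal subgroup of $\Gal(K)$ containing $\sigma_{1},\dots,\sigma_{e}$; then $K^{\sep}[\pmb{\sigma}]=(K^{\sep})^{N_{\pmb{\sigma}}}$, a Galois extension of $K$. Hence for a finite subextension $F/K$ of $K^{\sep}/K$ with Galois closure $\widetilde{F}$ over $K$ --- so that $\Gal(K^{\sep}/\widetilde{F})$ is normal in $\Gal(K)$ --- the conditions $F\subseteq K^{\sep}[\pmb{\sigma}]$, $\widetilde{F}\subseteq K^{\sep}[\pmb{\sigma}]$, $N_{\pmb{\sigma}}\subseteq\Gal(K^{\sep}/\widetilde{F})$, and $\pmb{\sigma}\in\Gal(\widetilde{F})^{e}$ (i.e.\ every $\sigma_{i}$ fixes $\widetilde{F}$) are all equivalent. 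In particular, if $C$ is a geometrically integral affine curve over a finite subextension $L/K$ of $K^{\sep}[\pmb{\sigma}]/K$ and $x$ is a closed point of $C$, then, after fixing an $L$-embedding of the residue field $L(x)$ into $K^{\sep}$, the point $x$ lies in $C(K^{\sep}[\pmb{\sigma}])$ exactly when $\pmb{\sigma}$ fixes the Galois closure $\widetilde{L(x)}$ of $L(x)/K$.

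Now I reduce to a single curve. As $K$ is countable there are only countably many pairs $(L,C)$ with $L/K$ finite Galois inside $K^{\sep}$ and $C$ a geometrically integral affine curve over $L$; and since $K^{\sep}[\pmb{\sigma}]/K$ is algebraic, every geometrically integral curve over $K^{\sep}[\pmb{\sigma}]$ is defined over some such $L$ and remains geometrically integral there (its base change to $L^{\alg}=K^{\alg}$ is unchanged). By the standard reduction of the PAC property to curves and countable additivity of Haar measure, it suffices to fix one pair $(L,C)$ and prove that
$$
B_{L,C}:=\{\pmb{\sigma}\in\Gal(L)^{e}:C(K^{\sep}[\pmb{\sigma}])=\varnothing\}
$$
is Haar-null --- note that $\Gal(L)^{e}$ is precisely the set of $\pmb{\sigma}$ with $L\subseteq K^{\sep}[\pmb{\sigma}]$.

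The curve is handled with Hilbertianity. Since $L/K$ is finite and $K$ is countable Hilbertian, so is $L$; as $C$ is geometrically integral, $L(C)/L$ is regular and thus has a separating transcendence element $t$, of degree $d:=[L(C):L(t)]$. By Hilbert's irreducibility theorem over $L$ there is a Hilbert set $H\subseteq L$ such that for every $a\in H$ the curve $C$ has a closed point $x_{a}$ lying over $a$ with $L(x_{a})/L$ separable of degree $d$; consequently $[\widetilde{L(x_{a})}:K]\le(d\,[L:K])!=:D$, a bound independent of $a$. Using that a Hilbert set of $L$ is not thin and is not depleted by removing a thin subset, I would choose $a_{1},a_{2},\dots\in H$ recursively so that, for each $n$, the field $\widetilde{L(x_{a_{n}})}$ is linearly disjoint over $K$ from the compositum $\widetilde{L(x_{a_{1}})}\cdots\widetilde{L(x_{a_{n-1}})}$ --- at stage $n$ the values of $a$ violating this form a thin set, being the $a$ over which a certain auxiliary cover specializes reducibly. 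Then the sets $E_{n}:=\Gal(\widetilde{L(x_{a_{n}})})^{e}$ are jointly independent with $\mu(E_{n})=[\widetilde{L(x_{a_{n}})}:K]^{-e}\ge D^{-e}$, and by the first paragraph $\pmb{\sigma}\in E_{n}$ forces $x_{a_{n}}\in C(K^{\sep}[\pmb{\sigma}])$; hence $B_{L,C}\subseteq\bigcap_{n\ge1}\bigl(\Gal(K)^{e}\setminus E_{n}\bigr)$ and
$$
\mu(B_{L,C})\le\prod_{n=1}^{N}\bigl(1-\mu(E_{n})\bigr)\le\bigl(1-D^{-e}\bigr)^{N}\xrightarrow[\;N\to\infty\;]{}0 .
$$
So $\mu(B_{L,C})=0$; intersecting over the countably many pairs $(L,C)$ gives a set of $\pmb{\sigma}$ of Haar measure one for which every geometrically integral curve over $K^{\sep}[\pmb{\sigma}]$ --- and hence $K^{\sep}[\pmb{\sigma}]$ itself --- is PAC.

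Two remarks. Since $\langle\pmb{\sigma}\rangle\subseteq N_{\pmb{\sigma}}$ and $K^{\sep}/K$ is algebraic, $K^{\sep}(\pmb{\sigma})/K^{\sep}[\pmb{\sigma}]$ is an algebraic extension, so by Fact~\ref{algebraic-extensions-of-PAC} the present statement already implies Fact~\ref{PAC-Nullstellensatz}; the argument above refines the one behind that fact. The real difficulty, as I see it, is concentrated in the third paragraph: a random $\pmb{\sigma}$ cannot be forced to realize a prescribed Galois group, so the argument must instead produce, inside one Hilbert set, infinitely many point-fields $\widetilde{L(x_{a_{n}})}$ that are at the same time of \emph{bounded} degree over $K$ (so each $\mu(E_{n})$ stays bounded below and the product above tends to $0$) and \emph{mutually} linearly disjoint over $K$ (so the $E_{n}$ are independent) --- arranging both at once is the crux.
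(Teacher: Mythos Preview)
The paper does not prove this statement; it is recorded as a Fact with a citation to Theorem~18.10.2 of Fried--Jarden, and no argument is supplied in the paper itself. Your proposal follows the standard textbook line behind that citation: reduce PAC to curves, descend each curve to a finite Galois subextension $L/K$, use Hilbert irreducibility to produce infinitely many closed points of bounded residue degree whose Galois closures are mutually ``independent,'' and finish with a Borel--Cantelli estimate.

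There is, however, one genuine slip in your third paragraph. You ask that $\widetilde{L(x_{a_n})}$ be linearly disjoint from $\widetilde{L(x_{a_1})}\cdots\widetilde{L(x_{a_{n-1}})}$ \emph{over $K$}, and then claim the events $E_n=\Gal\bigl(\widetilde{L(x_{a_n})}\bigr)^e$ are jointly independent in $\Gal(K)^e$. But every $\widetilde{L(x_{a_i})}$ contains $L$, so as soon as $L\neq K$ these fields share $L$ and can never be linearly disjoint over $K$; correspondingly the $E_n$ all lie inside the proper subset $\Gal(L)^e$ of $\Gal(K)^e$ and cannot be independent there (one computes $\mu(E_1\cap E_2)=\mu(E_1)\mu(E_2)/\mu(\Gal(L)^e)>\mu(E_1)\mu(E_2)$ whenever the closures are linearly disjoint over $L$). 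The product bound $\mu(B_{L,C})\le\prod_n(1-\mu(E_n))$ is therefore not justified as written.

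The repair is straightforward. Arrange linear disjointness \emph{over $L$} instead---this is what Hilbert irreducibility, applied over the successive finite extensions $F_{n-1}$ of $L$, actually delivers---and run the independence argument inside $\Gal(L)^e$ with its own normalised Haar measure $\nu$. Then $\nu(E_n)=[\widetilde{L(x_{a_n})}:L]^{-e}$ is still bounded below by a fixed positive constant, the $E_n$ are $\nu$-independent, and since $B_{L,C}\subseteq\Gal(L)^e$ you obtain $\nu(B_{L,C})\le\prod_{n}(1-\nu(E_n))\to 0$, whence $\mu(B_{L,C})=[L:K]^{-e}\,\nu(B_{L,C})=0$. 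With this correction your argument is essentially the Fried--Jarden proof that the paper cites.
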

\begin{fact}
	[Theorem 27.4.8 of \cite{MR2445111}]
	\label{density-theorem-for-smallest-Galois-closure-PAC-and-omega-free}
	Let $K$ be a countable Hilbertian field. Then, for almost all 
	$\pmb{\sigma}\in \Gal(K)^e$, the field $K^\sep[\pmb{\sigma}]$
	is $\omega$-free and PAC. 
\end{fact}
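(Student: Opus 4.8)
The PAC half of the statement is exactly Fact~\ref{density-theorem-for-smallest-Galois-closure-PAC}, so the plan is to establish $\omega$-freeness, and the first move is a purely Galois-theoretic translation. Write $\langle\pmb{\sigma}\rangle:=\overline{\langle\sigma_1,\ldots,\sigma_e\rangle}$ for the closed subgroup of $\Gal(K)$ topologically generated by $\sigma_1,\ldots,\sigma_e$. By definition $K^\sep(\pmb{\sigma})=(K^\sep)^{\langle\pmb{\sigma}\rangle}$, and its largest Galois-over-$K$ subextension $K^\sep[\pmb{\sigma}]$ corresponds, under the Galois correspondence for $K^\sep/K$, to the smallest closed normal subgroup of $\Gal(K)$ containing $\langle\pmb{\sigma}\rangle$ — that is, to the normal closure $\M(\pmb{\sigma}):=\langle\pmb{\sigma}\rangle^{\Gal(K)}$ — so that $\Gal(K^\sep[\pmb{\sigma}])\cong\M(\pmb{\sigma})$. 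The entire content is therefore that $\M(\pmb{\sigma})\cong\hat{F}_\omega$ for almost all $\pmb{\sigma}\in\Gal(K)^e$.

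To prove this I would verify Iwasawa's characterization of $\hat{F}_\omega$: a profinite group with a countable basis of open neighbourhoods of the identity is isomorphic to $\hat{F}_\omega$ if and only if every finite embedding problem for it admits a solution that is an epimorphism. Two of the needed inputs are cheap. Since $K$ is countable, so is $K^\sep$, hence $\Gal(K)$ has only countably many open subgroups, and this passes to its closed subgroup $\M(\pmb{\sigma})$; thus $\M(\pmb{\sigma})$ is second countable for every $\pmb{\sigma}$. And since $K^\sep[\pmb{\sigma}]$ is PAC for almost all $\pmb{\sigma}$, its absolute Galois group $\M(\pmb{\sigma})$ is projective for almost all $\pmb{\sigma}$ (absolute Galois groups of PAC fields are projective), so every finite embedding problem for $\M(\pmb{\sigma})$ has at least a weak solution. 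It then remains to prove the decisive point: for almost all $\pmb{\sigma}$, every finite embedding problem for $\M(\pmb{\sigma})$ has a \emph{surjective} solution.

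For that I would argue by a countable-intersection (Borel--Cantelli type) estimate over the Haar measure on $\Gal(K)^e$, using that $K$ is Hilbertian. Countability of $K$ again guarantees that, up to isomorphism, only countably many finite embedding problems can occur and that each of them is governed by finitely many finite Galois extensions of $K$; Hilbert's irreducibility theorem then realizes over $K$ every finite group that a prescribed embedding problem demands, at a large (Hilbertian) set of specialization points. The structural feature that makes the measure estimate work is that $\M(\pmb{\sigma})$ is the \emph{normal closure} of the generic $e$-generated group $\langle\pmb{\sigma}\rangle$: for a Haar-random tuple the conjugates $\tau\sigma_i\tau^{-1}$ are, with probability one, spread uniformly enough across each finite quotient of $\Gal(K)$ to meet the cosets dictated by any given surjection $B\twoheadrightarrow A$, which converts the weak solution furnished by projectivity into an epimorphism. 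Deleting the countably many exceptional null sets (one per embedding problem) leaves a single conull set of tuples $\pmb{\sigma}$ over which $\M(\pmb{\sigma})$ solves every finite embedding problem by an epimorphism; Iwasawa's criterion then gives $\M(\pmb{\sigma})\cong\hat{F}_\omega$ there, and together with Fact~\ref{density-theorem-for-smallest-Galois-closure-PAC} this shows $K^\sep[\pmb{\sigma}]$ is $\omega$-free and PAC for almost all $\pmb{\sigma}$.

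The hard part is precisely this last upgrade — turning the weak (projectivity) solutions into epimorphisms, uniformly in $\pmb{\sigma}$. Projectivity together with PAC-ness does not suffice on its own: a pseudo-finite field is PAC with projective (indeed rank-one free) absolute Galois group yet is not $\omega$-free, since the embedding problem $S_3\twoheadrightarrow\mathbb{Z}/2\mathbb{Z}$ over $\hat{\mathbb{Z}}\twoheadrightarrow\mathbb{Z}/2\mathbb{Z}$ has no surjective solution. What supplies the missing surjectivity is the Hilbertianity of $K$, exploited via Hilbert sets and the twisted-wreath-product solution of split embedding problems over Hilbertian fields; this is the technically substantial portion of the argument, developed at length in \cite{MR2445111}, which here I would invoke rather than reproduce.
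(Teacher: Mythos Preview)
The paper does not give a proof of this statement: it is recorded as a \emph{Fact} with a citation to Theorem~27.4.8 of \cite{MR2445111} and is used as a black box, so there is no in-paper argument to compare against. Your sketch is a reasonable outline of how the cited theorem is actually proved in \cite{MR2445111}: the Galois-theoretic translation $\Gal(K^\sep[\pmb{\sigma}])\cong\langle\pmb{\sigma}\rangle^{\Gal(K)}$, the reduction via Iwasawa's criterion, the projectivity coming from PAC, and the use of Hilbertianity to force surjectivity of solutions are all the right ingredients. The one place your write-up is genuinely soft is the measure-theoretic step: the claim that ``up to isomorphism only countably many finite embedding problems can occur'' needs to be made precise by showing that it suffices to solve the embedding problems induced from finite Galois extensions of $K$ (of which there are countably many), not arbitrary finite quotients of $\M(\pmb{\sigma})$, and that each such induced problem is solvable off a null set. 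This reduction is exactly where the technical work in \cite{MR2445111} lies; you correctly flag it as the hard part and explicitly invoke the reference rather than reproduce it, which is consistent with how the present paper treats the result.
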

\begin{fact}
	[Theorem 2 of \cite{MR2350157}]
	\label{Kollar-density}
	Suppose that $K$ is a PAC field, $V$ a non-trivial valuation on $K^\alg$. 
	Then for any absolutely irreducible
	affine algebraic set $V$ defined over $K$, $V(K)$ is 
	$V$-dense in $V(K^\alg)$.
\end{fact}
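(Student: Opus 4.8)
The plan is to recast this density statement --- which, in the weaker form ``$X(K)$ is Zariski-dense in $X$'', is immediate from the definition of a PAC field --- as an assertion about $K$-rational points of a geometrically integral $K$-variety, and then to invoke the PAC hypothesis. Fix an affine embedding $X\subseteq\mathbb{A}^n$. By definition of the valuation topology it suffices to show: for every $\bar a=(\bar a_1,\dots,\bar a_n)\in X(K^\alg)$ and every $\gamma$ in the value group of $V$ there is $b=(b_1,\dots,b_n)\in X(K)$ with $v(b_i-\bar a_i)\ge\gamma$ for all $i$.

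First, two reductions. By Fact~\ref{PAC-residue-field-and-value-group}, $K^\sep/K$ is immediate and $vK^\times$ is divisible; hence by Fact~\ref{algebraic-extension-of-valuations} the value group and residue field of $(K^\alg,V)$ already coincide with those of $(K^\sep,V\cap K^\sep)$, so $(K^\alg,V)$ is an immediate extension of $(K^\sep,V\cap K^\sep)$ and $K$ is $V$-dense in $K^\alg$. Moreover, since $X$ is geometrically integral it is geometrically reduced, so the smooth locus $X_{\mathrm{sm}}$ is a nonempty Zariski-open subset; using that $V$ is non-trivial and that $(K^\alg,V)$ is Henselian ($K^\alg$ being algebraically closed), one sees that $X_{\mathrm{sm}}(K^\sep)$ is $V$-dense in $X(K^\alg)$: any point of $X(K^\alg)$ lies on an irreducible curve meeting $X_{\mathrm{sm}}$ and is a $V$-limit of points of that curve, generically smooth on $X$; passing to a $K$-defined étale chart $\pi\colon U\to\mathbb{A}^d$ ($d=\dim X$) near such a point, approximating its $\pi$-image by a point of $\mathbb{A}^d(K^\sep)$, and lifting by Hensel's lemma produces approximants in $X(K^\sep)$ --- the lift being $K^\sep$-rational because, $K^\sep$ being separably closed, finite étale $K^\sep$-schemes are completely split. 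Since approximation is transitive, we may assume $\bar a\in X_{\mathrm{sm}}(L)$ for some finite separable $L/K$; put $w=V\cap L$, and note that Fact~\ref{algebraic-extension-of-valuations} and divisibility of $vK^\times$ give $wL^\times=vK^\times$, $Lw=Kv$, so $(L,w)/(K,v)$ is immediate and $K$ is $w$-dense in $L$.

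The heart of the argument is to manufacture a valued field $(F,u)$ with $F/K$ a regular field extension, $u$ an extension of $v$, a point $\beta\in X(F)$, and an extension of $u$ to $F^\alg\supseteq K^\alg$ under which $u(\beta_i-\bar a_i)\ge\gamma$ for all $i$: over such a \emph{regular} extension $X$ does have a point in the prescribed $V$-ball around $\bar a$. The natural choice is $F=K(X)$, regular over $K$ because $X$ is geometrically integral, with $\beta=\eta$ the generic point of $X$; the valuation $u$ is built from a local model of $X$ at the smooth point $\bar a$ --- a regular system of parameters $x_1,\dots,x_d$ at $\bar a$ together with a uniformiser for $w$ --- by taking a monomial (Gauss-type) extension of $w$ along the $x_j$, scaled so that $u(x_j)\ge\gamma$; since each $\eta_i-\bar a_i$ vanishes at $\bar a$ and hence lies in the ideal generated by the $x_j$, we get $u(\eta_i-\bar a_i)\ge\gamma$. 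Now the relation ``$y\in X$ and $v(y_i-\bar a_i)\ge\gamma$ for all $i$'', read off on $K^\alg$-points of valued extensions of $(K,v)$, is defined by finitely many polynomial equations and divisibility conditions with parameters in $K$ (the equations of $X$, the minimal polynomials of the $\bar a_i$ over $K$, and some $c_0\in K^\times$ with $v(c_0)=\gamma$), and it is witnessed in $(K(X),u)$ by $\eta$, hence by a Zariski-dense family of points of $X$. One then repackages this data as a single \emph{geometrically integral} $K$-variety $Z$ --- assembled from $X$, the auxiliary coordinates $x_j/c_0$, and the $\bar a_i$ (through their minimal polynomials) --- carrying a nonempty Zariski-open $Z^\circ$ whose $K$-points correspond precisely to the desired $b\in X(K)$; here the smoothness reduction and the Henselianity of $(K^\alg,V)$ convert the $V$-open ($\mathrm{div}$) conditions into Zariski-open ones, while $V$-density of $K$ in $K^\alg$ shows $Z^\circ(K^\alg)\ne\varnothing$. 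Being a nonempty Zariski-open subset of a geometrically integral $K$-variety, $Z^\circ$ is itself geometrically integral, so by the defining property of PAC fields it has a $K$-point, which unwinds to $b$.

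The main obstacle is the construction and repackaging in the previous paragraph. On the one hand, one must produce $u$ with the precise estimate $u(\eta_i-\bar a_i)\ge\gamma$; the subtlety is that the local ring of $\bar a$ on the $L$-variety $X\times_K L$ contains all of $L$, so it cannot be dominated by any extension of $w$, and one must instead work with a flat model of $X$ over $\mathcal{O}_w$ and a point of its special fibre specialising $\bar a$ (whose local ring has residue field $Kv$). On the other hand, one must turn the divisibility side-conditions --- which a priori carve out a merely $V$-open, and hence PAC-invisible, subset of $X(K^\alg)$ --- into membership in a nonempty Zariski-open subset of a geometrically integral $K$-variety. It is precisely in this geometric packaging that all three hypotheses ($V$ non-trivial, $X$ geometrically integral, $K$ PAC) enter essentially; it is the technical core of the original argument (see \cite{MR2350157}).
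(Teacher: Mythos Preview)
The paper does not contain a proof of this statement. It is recorded as a \emph{Fact} in the preliminaries section, attributed to Theorem~2 of \cite{MR2350157}, and no argument is supplied; the paper merely invokes it later (in Lemma~\ref{lemma-downward-morphism-from-a-regular-extension}). There is therefore nothing in the paper against which to compare your proposal.

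As to the proposal itself: you give a reasonable high-level outline of how one might approach the result --- reduce to a smooth $K^\sep$-point, manufacture a Gauss-type valuation on the function field $K(X)$ witnessing the approximation condition at the generic point, and then package everything as a $K$-rational-point problem on an auxiliary geometrically integral variety --- but you explicitly decline to carry out the decisive step. Your final paragraph concedes that turning the $V$-open divisibility conditions into a Zariski-open condition on a geometrically integral $K$-variety is ``the technical core of the original argument'' and refers the reader back to \cite{MR2350157}. That is precisely the part that distinguishes this density theorem from the trivial Zariski-density statement, and without it the proposal is an annotated table of contents rather than a proof. If your intention is merely to match what the paper does, you have already exceeded it; if your intention is to supply a proof the paper omits, the repackaging step must actually be written out.
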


\subsection{Languages and theories}
\begin{definition}
	We use the symbol $\Lr$ to denote the language of rings $\{+, - , \times,
	0, 1\}$,
	the symbol $\Ldiv$ to denote the (one-sorted)
	language of valued fields (see Subsection 
	\ref{subsection-valued-fields} for valued fields),
	namely the set $\Lr\cup\{\mid\}$, where the 
	vertical bar `$\mid$' is meant to be interpreted as the division predicate
	associated to the valuation, that is, 
	`$x \mid y$' if and only if the valuation of $x$ is not more than the 
	valuation of $y$. However, we usually use write $v(x)\leq v(y)$ 
	or $v(y)\geq v(x)$
	instead of $x \mid y$.
	We use the symbol $\ACVF$ to denote the theory of algebraically closed 
	non-trivially valued fields, in the language of $\Ldiv$.
\end{definition}
\begin{definition}
	If $p$ is a prime number, 
	we use $\Lp$ to denote the language $\Lr\cup\{\lambda_{n,i}(x;y_1, \ldots, 
	y_n)\}_{n\in \omega, i\in p^n}$, where the 
	function symbols are meant to be interpreted as the relative $p$-coordinate
	functions as defined in Definition 
	\ref{definition-relative-p-coordinate-funtions}.
	If $p$ is $0$, we use $\Lp$ to denote $\Lr$.
\end{definition}
\begin{definition}
	$\Lpdiv$ stands for $\Lp\cup \Ldiv$. 
\end{definition}
\begin{fact}[Prop.~4.3.28 of \cite{MR1924282}]\label{use-embedding-to-get-QE}
	If $\Lang$ is a language containing a constant symbol and $T$ 
	a theory in $\Lang$, then the following 
	are equivalent:
	\begin{enumerate}
		\item $T$ has quantifier elimination;
		\item for every $\M\models T$, $A\subset M$, 
			$\N\models T$ being $|M|^+$-saturated,
	and $f:A\to N$ a partial $\Lang$-embedding, $f$ extends to an 
	$\Lang$-embedding of $\M$
	into $\N$.
	\end{enumerate}
\end{fact}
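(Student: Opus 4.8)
The plan is to establish the two implications by standard model-theoretic techniques: for $(1)\Rightarrow(2)$ a transfinite recursion that extends the given partial embedding one element at a time using saturation, and for $(2)\Rightarrow(1)$ a ``sandwich'' of elementary extensions that upgrades an arbitrary embedding into an elementary one, after which the usual substructure criterion for quantifier elimination applies. The hypothesis that $\Lang$ contains a constant symbol will be used only to guarantee that substructures are non-empty and that quantifier-free sentences exist, so that the degenerate cases (a formula $T$-equivalent to $\top$ or to $\bot$) are covered.

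For $(1)\Rightarrow(2)$: assuming quantifier elimination, every $\Lang$-formula is modulo $T$ a Boolean combination of atomic formulas, so any partial $\Lang$-embedding between models of $T$ preserves \emph{every} $\Lang$-formula with parameters in its domain; i.e.\ it is partial elementary. Given $\M\models T$, $A\subseteq M$, a $|M|^{+}$-saturated $\N\models T$, and a partial $\Lang$-embedding $f:A\to N$, I would enumerate $M=\{m_\xi:\xi<\kappa\}$ with $\kappa=|M|$ and recursively build partial $\Lang$-embeddings $f=f_0\subseteq f_1\subseteq\cdots$ whose domains are the substructures generated by $A\cup\{m_\eta:\eta<\xi\}$ (each of size $\le\kappa$). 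To pass from $f_\xi$ to $f_{\xi+1}$, let $q$ be the quantifier-free type of $m_\xi$ over $D:=\mathrm{dom}(f_\xi)$: any finite conjunction $\psi(x,\bar b)$ of members of $q$ is realized by $m_\xi$ in $\M$, so $\M\models\exists x\,\psi(x,\bar b)$; since $\exists x\,\psi(x,\bar y)$ is $T$-equivalent to a quantifier-free formula that $f_\xi$ preserves, $\N\models\exists x\,\psi(x,f_\xi(\bar b))$. Thus the pushforward $f_\xi(q)$ is finitely satisfiable over the size-$\le\kappa$ set $f_\xi(D)$, hence realized in $\N$ by saturation; extending $f_\xi$ by $t(\bar d,m_\xi)\mapsto t^{\N}(f_\xi(\bar d),n)$ on the generated substructure yields the next partial embedding, its well-definedness and the embedding property being precisely the atomic and negated-atomic facts contained in $f_\xi(q)$. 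Taking unions at limits, $\bigcup_\xi f_\xi:\M\to\N$ is the desired embedding.

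For $(2)\Rightarrow(1)$: the first step is to show that (2) forces every partial $\Lang$-embedding $f:A\to N$ from a substructure $A$ of a model $\M\models T$ into a model $\N\models T$ to be partial elementary. I would prove this by a sandwich: choose a $|M|^{+}$-saturated $\N'\succeq\N$ and extend $f$, via (2), to an embedding $g_0:\M\to\N'$; then choose a $|N'|^{+}$-saturated $\M_1\succeq\M$ and apply (2) to the partial embedding $g_0^{-1}:g_0(M)\to M\subseteq M_1$ to obtain $h_0:\N'\to\M_1$ with $h_0\circ g_0=\mathrm{id}_M$; then iterate, alternating sides. The colimit $\U$ of $\M\hookrightarrow\N'\hookrightarrow\M_1\hookrightarrow\N'_1\hookrightarrow\cdots$ contains $\M$ and $\N$ as elementary substructures (each side is a cofinal elementary chain), and the image of $a\in A$ coming from $\M$ agrees in $\U$ with the image of $f(a)$ coming from $\N$ because $h_0(f(a))=h_0(g_0(a))=a$; hence $\M\models\varphi(\bar a)\iff\U\models\varphi(\bar a)\iff\U\models\varphi(f\bar a)\iff\N\models\varphi(f\bar a)$. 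With that in hand, quantifier elimination follows from the classical test: to show a formula $\varphi(\bar x)$ is $T$-equivalent to a quantifier-free one, let $\Gamma(\bar x)$ be its set of quantifier-free consequences and argue, via compactness, that $T\cup\Gamma(\bar c)\models\varphi(\bar c)$ — for any $\M\models T\cup\Gamma(\bar a)$ one produces (again by compactness) a model $\N\models T$ with $\bar b\in N$ realizing $\tp^{\M}_{\mathrm{qf}}(\bar a)$ and $\N\models\varphi(\bar b)$, and then the partial embedding $\bar b\mapsto\bar a$ is elementary by the first step, so $\M\models\varphi(\bar a)$ — after which a finite subset of $\Gamma$ suffices, the trivial-$\Gamma$ case being handled by the constant symbol.

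I expect the sandwich construction in $(2)\Rightarrow(1)$ to be the main technical point: one must keep the saturation cardinal at each stage large enough relative to the model built at the previous stage so that (2) is legitimately applicable with that model playing the role of ``$\M$'', and one must check that the two interleaved chains are mutually cofinal so that both $\M$ and $\N$ really do embed elementarily into the common colimit. The remaining ingredients — the transfinite recursion and the two compactness arguments — are routine once the quantifier-free-type transfer step is correctly phrased.
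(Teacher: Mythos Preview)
The paper does not prove this statement; it is recorded as a \emph{Fact} with a citation to Marker's textbook (Prop.~4.3.28) and no argument is given. There is therefore nothing in the paper to compare your proof against.

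Your proof is correct and is essentially the standard textbook argument. The $(1)\Rightarrow(2)$ direction via transfinite extension using saturation is routine and well presented. For $(2)\Rightarrow(1)$, your sandwich/back-and-forth construction to upgrade a partial embedding to a partial elementary map is the usual device, and the subsequent compactness argument extracting a quantifier-free equivalent is the classical Lemma~on~Constants/separation test. One small point worth making explicit: when you invoke (2) to get $h_0:\N'\to\M_1$ extending $g_0^{-1}$, you are applying (2) with $\N'$ in the role of $\M$ and $\M_1$ in the role of $\N$, which is why you need $\M_1$ to be $|N'|^{+}$-saturated rather than merely $|M|^{+}$-saturated; you say this, but it is the place where a careless reader would trip. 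Otherwise the argument is complete.
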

\begin{fact}
	[Theorem 13.1 of \cite{MR0398817}]
	\label{substructure-complete} A theory $T$ has quantifier elimination 
	if and only $T$ is substructure complete, i.e.~for any substructure 
	$\mathcal{A}$ of a model $\M\models T$, the 
	new theory obtained by adjoining to $T$ the atomic diagram of $\mathcal{A}$,
	$T\cup\mathrm{Diag}(A)$, is complete.
\end{fact}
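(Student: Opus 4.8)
The statement is the classical Shoenfield--Sacks criterion, and I would prove the two implications by direct model-theoretic arguments using only compactness and the absoluteness of quantifier-free formulas under substructure embeddings.

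For the forward direction, assume $T$ has quantifier elimination and let $\mathcal{A}$ be a substructure of some $\M\models T$. Given two models $\mathcal{N}_1,\mathcal{N}_2$ of $T\cup\mathrm{Diag}(\mathcal{A})$, each carries an embedded copy of $\mathcal{A}$, so any sentence of the expanded language can be written $\psi(\bar a)$ with $\psi$ an $\Lang$-formula and $\bar a$ a tuple from $A$. Replacing $\psi$ by a $T$-equivalent quantifier-free $\theta$, I get $\mathcal{N}_j\models\psi(\bar a)\iff\mathcal{N}_j\models\theta(\bar a)\iff\mathcal{A}\models\theta(\bar a)$, the last step because quantifier-free formulas pass freely between a substructure and an overstructure. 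The right-hand side is independent of $j$, so $T\cup\mathrm{Diag}(\mathcal{A})$ is complete.

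For the converse, assume $T$ is substructure complete and fix an $\Lang$-formula $\varphi(\bar x)$; I want a $T$-equivalent quantifier-free formula. Let $\Gamma(\bar x)$ be the set of quantifier-free $\theta(\bar x)$ with $T\vdash\varphi(\bar x)\to\theta(\bar x)$. The key claim is $T\cup\Gamma(\bar c)\vdash\varphi(\bar c)$ for fresh constants $\bar c$; compactness then yields finitely many $\theta_1,\dots,\theta_n\in\Gamma$ with $T\vdash\varphi\leftrightarrow\bigwedge_i\theta_i$. To prove the claim, suppose some $\M\models T$ has $\bar a\in M$ with $\M\models\Gamma(\bar a)\wedge\neg\varphi(\bar a)$, and put $\mathcal{A}=\langle\bar a\rangle_\M$. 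I first check $T\cup\mathrm{Diag}(\mathcal{A})\cup\{\varphi(\bar a)\}$ is consistent: otherwise some finite conjunction $\delta$ of sentences from $\mathrm{Diag}(\mathcal{A})$ satisfies $T\vdash\delta\to\neg\varphi(\bar a)$, and since every element of $\mathcal{A}$ is an $\Lang$-term in $\bar a$ one can write $\delta$ as $\theta(\bar a)$ for a quantifier-free $\Lang$-formula $\theta$, whence $T\vdash\varphi(\bar x)\to\neg\theta(\bar x)$, i.e.~$\neg\theta\in\Gamma$; but $\M\models\theta(\bar a)$ (as $\mathcal{A}\models\theta(\bar a)$ and $\theta$ is quantifier-free), contradicting $\M\models\Gamma(\bar a)$. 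Thus there is $\mathcal{N}\models T$ with $\mathcal{A}\hookrightarrow\mathcal{N}$ and $\mathcal{N}\models\varphi(\bar a)$; now $\mathcal{A}\hookrightarrow\M$ as well with $\M\models\neg\varphi(\bar a)$, so substructure completeness applied to $\mathcal{A}$ gives $(\M,a)_{a\in A}\equiv(\mathcal{N},a)_{a\in A}$, a contradiction.

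The only delicate point is this last argument's reduction from the diagram language back to $\Lang$: it rests on the observation that a substructure generated by a finite tuple has every element named by a term in that tuple, so the extra constant symbols of $\mathrm{Diag}(\mathcal{A})$ can always be traded for $\Lang$-terms in $\bar a$. Granting that, the rest is routine compactness together with the transfer of quantifier-free truth across embeddings.
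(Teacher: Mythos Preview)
The paper does not give its own proof of this statement: it is recorded as a Fact with a citation to Sacks' \emph{Saturated Model Theory} and no argument is supplied. Your proof is correct and is essentially the standard textbook argument for this criterion (compactness plus absoluteness of quantifier-free formulas across embeddings), so there is nothing to compare.
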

\begin{fact}\label{ACVF-QE}
	The theory $\ACVF$ admits quantifier elimination in $\Ldiv$.  Robinson 
	proved that $\ACVF$ is model-complete in \cite{MR0472504}. For a complete
	proof of the quantifier elimination result, consult for example 
	\cite{vandenDries:Valued}.
\end{fact}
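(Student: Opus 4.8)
The plan is to prove quantifier elimination through the embedding criterion of Fact~\ref{use-embedding-to-get-QE}. Fix $\M\models\ACVF$, a substructure $A\subseteq M$, an $|M|^+$-saturated $\N\models\ACVF$, and a partial $\Ldiv$-embedding $f\colon A\to N$; the task is to extend $f$ to an $\Ldiv$-embedding $\M\to\N$. A substructure of a valued field in $\Ldiv$ is a subring carrying the restriction of the division predicate, hence an integral domain on which $f$ preserves and reflects the induced valuation. I would enlarge the domain of $f$ in three stages — first to the fraction field, then to the relative algebraic closure inside $M$, and finally across transcendence degree one element at a time — invoking saturation of $\N$ only at the transcendental steps.

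\emph{Fraction field.} The division predicate on $A$ picks out the valuation ring $V\cap\Frac(A)$, and a valuation is determined on a localization by its restriction, so $f$ extends uniquely to a valued-field embedding $\Frac(A)\to N$; thus I may assume $A=K$ is a valued subfield of $M$. \emph{Algebraic closure.} Let $L$ be the relative algebraic closure of $K$ in $M$. Since $M$ is algebraically closed, $L$ is algebraically closed and $L/K$ is normal, and — whether or not the restriction of the valuation to $L$ is trivial — the residue field of $L$ is algebraically closed and its value group divisible (by Fact~\ref{residue-field-and-value-group-of-SCVF} in the non-trivial case, trivially otherwise). As $\N$ is algebraically closed, the field embedding $f$ extends to a field embedding $\tilde f\colon L\to N$; now $v|_L$ and the pullback of the valuation of $N$ along $\tilde f$ are two extensions of $v|_K$ to the normal extension $L/K$, so by Fact~\ref{conjugate-theorem-normal-extensions} they differ by some $\sigma\in\Aut(L/K)$, and after replacing $\tilde f$ by $\tilde f\circ\sigma$ it is a valued-field embedding. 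Hence I may assume $K$ is an algebraically closed valued subfield of $M$ with algebraically closed residue field and divisible value group.

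\emph{Transcendental step.} It remains to treat the case where $K\subseteq M$ is such a subfield, $f\colon K\to N$ is a valued-field embedding, and $t\in M\setminus K$; iterating this, interleaved with re-taking relative algebraic closures, together with transfinite recursion will exhaust $M$. Here $t$ is transcendental over $K$, and since $K$ is algebraically closed every polynomial over $K$ splits into linear factors, so the valued-field structure of $K(t)$ is determined by the function $a\mapsto v(t-a)$ on $K$. I would split according to $S:=\{v(t-a):a\in K\}$. If $S$ has a maximum attained at $a_0$ with $v(t-a_0)\notin vK^\times$, then using divisibility of $wN^\times$ and surjectivity of $w$, I realize by saturation of $\N$ an element of $wN^\times$ inducing over $wf(K)^\times$ the same cut that $v(t-a_0)$ induces over $vK^\times$, pick $s_0\in N$ of that value, and set $s=f(a_0)+s_0$. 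If $S$ has a maximum attained at $a_0$ with $v(t-a_0)\in vK^\times$, choose $c\in K$ with $v(c)=v(t-a_0)$; maximality forces the residue of $(t-a_0)/c$ to be transcendental over $Kv$ (a residue algebraic over the algebraically closed $Kv$ would lie in $Kv$, and subtracting a lift would strictly raise the value), so by saturation I realize $s_0\in N$ with $w(s_0)=w(f(c))$ and $\overline{s_0/f(c)}$ transcendental over the residue field of $f(K)$, and set $s=f(a_0)+s_0$. If $S$ has no maximum, a cofinal choice of parameters forms a pseudo-Cauchy sequence with pseudo-limit $t$ and no pseudo-limit in $K$ — necessarily of transcendental type — and by saturation of $\N$ the image sequence has a pseudo-limit $s\in N$. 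In each case $t\mapsto s$ induces a valued-field embedding $K(t)\to N$ extending $f$: in the immediate case this is Kaplansky's lemma on pseudo-Cauchy sequences of transcendental type, and otherwise a direct check which, through the splitting of polynomials over $K$, reduces to the values $v(t-a)$ that were arranged to match.

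The heart of the argument, and the step most likely to need care, is this transcendental step: showing the three cases are exhaustive — which is precisely where the algebraic closedness of the residue field and divisibility of the value group from Fact~\ref{residue-field-and-value-group-of-SCVF} are used essentially — and checking that the element $s$ produced in each case respects the valuation on \emph{all} of $K(t)$, not merely on the linear polynomials used to choose it. The first two stages are routine once one has the uniqueness of valuation extensions along localizations and the conjugacy of valuation extensions over normal extensions (Fact~\ref{conjugate-theorem-normal-extensions}).
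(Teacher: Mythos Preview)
The paper does not supply a proof of this fact; it is stated with citations to Robinson (for model-completeness) and to van den Dries (for full quantifier elimination). Your sketch is correct and is essentially the standard argument one finds in the van den Dries notes the paper cites: pass to the fraction field, extend over the relative algebraic closure using conjugacy of valuation extensions on normal extensions (Fact~\ref{conjugate-theorem-normal-extensions}), and then treat a single transcendental element via the value-transcendental / residue-transcendental / immediate trichotomy, invoking saturation of $\N$ to produce the target in each case.
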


\begin{remark}
	More languages will be introduced in Section \ref{section-lack-of-QE}.
\end{remark}

\section{Some lemmas}

\begin{lemma}
	\label{number-of-extensions-equals-degree}
	Suppose that $(K,V)$ is a valued field where $K$ is PAC and $V$ is 
	non-trivial. Then for any finite separable algebraic extension $L/K$, there 
	are exactly $[L:K]$ distinct extensions of $V$ to $L$.
\end{lemma}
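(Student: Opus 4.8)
The plan is to reduce the statement to the case of the Galois closure of $L/K$, where Fact~\ref{Galois-extension-of-valuations} applies, and then to transport the count back down to $L$ by a group-action argument. Write $\mathcal{W}_F$ for the set of valuation rings on a field $F$ extending $V$. The one input from the PAC hypothesis that I would isolate first is this: since $K^\sep/K$ is an immediate extension (Fact~\ref{PAC-residue-field-and-value-group}), for \emph{every} finite subextension $K\subseteq F\subseteq K^\sep$ and every $W\in\mathcal{W}_F$ one has $e(W/V)=f(W/V)=1$. Indeed, extending $W$ to a valuation ring $W'$ on $K^\sep$ by Fact~\ref{extension-of-valuation-exists}, one gets $Kv\subseteq Fw\subseteq K^\sep w'=Kv$ and $vK^\times\subseteq wF^\times\subseteq w'(K^\sep)^\times=vK^\times$, so both inclusions are equalities.

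Next, let $M\subseteq K^\sep$ be the Galois closure of $L$ over $K$. It is finite Galois over $K$; moreover, being a splitting field over $K$ of polynomials that already have their coefficients in $L$, it is Galois over $L$ as well, say with group $H:=\Gal(M/L)$ of order $[M:L]$. Applying Fact~\ref{Galois-extension-of-valuations} to $M/K$: if $r:=|\mathcal{W}_M|$ and $e,f$ denote the common ramification index and residue degree, then $[M:K]=ref$; by the previous paragraph $e=f=1$, hence $|\mathcal{W}_M|=[M:K]$. By Fact~\ref{conjugate-theorem-normal-extensions} the group $\Gal(M/K)$ acts transitively on $\mathcal{W}_M$, and as $|\mathcal{W}_M|=|\Gal(M/K)|$ this action is simply transitive; in particular it is free.

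Finally I would descend to $L$ via the restriction map $\mathcal{W}_M\to\mathcal{W}_L$, $W\mapsto W\cap L$. It is surjective: any $W_L\in\mathcal{W}_L$ extends to some $W\in\mathcal{W}_M$ by Fact~\ref{extension-of-valuation-exists}. Its fibres are exactly the $H$-orbits on $\mathcal{W}_M$: elements of $H$ fix $L$ pointwise, so they preserve fibres, while conversely two members of $\mathcal{W}_M$ restricting to the same valuation on $L$ are conjugate over $L$ by Fact~\ref{conjugate-theorem-normal-extensions}, since $M/L$ is Galois. Because $H\leq\Gal(M/K)$ acts freely on $\mathcal{W}_M$, each orbit has size $[M:L]$, whence $|\mathcal{W}_L|=|\mathcal{W}_M|/[M:L]=[M:K]/[M:L]=[L:K]$.

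I expect the only genuinely delicate point to be the step $e=f=1\Rightarrow|\mathcal{W}_M|=[M:K]$, i.e.\ recognising that Fact~\ref{Galois-extension-of-valuations} combined with the immediacy of $K^\sep/K$ already forces the a priori possible defect to be trivial; once $\Gal(M/K)$ is seen to act simply transitively on $\mathcal{W}_M$, the descent to $L$ is purely formal and could equivalently be phrased through the triviality of the decomposition group and a count of cosets $H\backslash\Gal(M/K)$.
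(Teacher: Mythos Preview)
Your proof is correct and follows the same overall strategy as the paper's: pass to the Galois closure $\tilde L$ (your $M$), use that $e=f=1$ together with Fact~\ref{Galois-extension-of-valuations} to get $[\tilde L:K]$ extensions upstairs, then divide by $[\tilde L:L]$. The only difference is in how the descent is justified: the paper observes that $L$ is again PAC (Fact~\ref{algebraic-extensions-of-PAC}) and re-applies Fact~\ref{Galois-extension-of-valuations} to the Galois extension $\tilde L/L$, whereas you argue directly that $\Gal(\tilde L/K)$ acts simply transitively on $\mathcal W_{\tilde L}$ and identify the fibres of $\mathcal W_{\tilde L}\to\mathcal W_L$ with $H$-orbits. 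Your route has the small advantage of not invoking Fact~\ref{algebraic-extensions-of-PAC}; the paper's has the advantage of being a one-line reuse of the same fact. Both are equally valid and amount to the same computation.
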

\begin{proof}
	By Fact \ref{PAC-residue-field-and-value-group}, $Kv$ is algebraically 
	closed and $vK^\times$ is divisible. By Fact
	\ref{algebraic-extension-of-valuations}, for any 
	 extension $W$ of $V$ to $L/K$, the ramification index $e(W/V)$ and 
	 the residue degree $f(W/V)$ are both always equal to $1$. Let $\tilde{L}$
	 be the Galois closure of $L/W$, then by Fact
	 \ref{Galois-extension-of-valuations}, there are exactly $[\tilde{L}:K]$ 
	 extensions of $V$ to $\tilde{L}$.  By Fact \ref{algebraic-extensions-of-PAC},
	 $L$ is also PAC, since $\tilde{L}$ is also Galois over $L$, there are 
	 exactly $[\tilde{L}:L]$ extension of a given non-trivial valuation on $L$
	 to $\tilde{L}$. This means that there are exactly 
	 $$ \frac{[\tilde{L}:K]}{[\tilde{L}:L]} = [L:K]$$
	 extensions of $V$ to $L$.
\end{proof}
\begin{lemma}
	Suppose that $(K,V)$ is a valued field where $K$ is PAC and $V$ is 
	non-trivial. Suppose that $L/K$ is a
	finite separable algebraic extension. 
	If $L/K$ is not Galois, then
	there exist two valuations $V_1$ and $V_2$
	on $L$
	extending $V$ such that for any $\sigma \in \mathrm{Aut}(L/K)$, $\sigma(V_1)
	\neq V_2$.
\end{lemma}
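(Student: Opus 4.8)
The plan is to reformulate the statement as an orbit‐counting assertion. By Lemma~\ref{number-of-extensions-equals-degree}, the set $S$ of all valuation rings on $L$ extending $V$ has exactly $[L:K]$ elements, and $\mathrm{Aut}(L/K)$ acts on $S$ by $\sigma\cdot W:=\sigma(W)$; this is well defined since $\sigma$ fixes $K$ pointwise, so $\sigma(W)\cap K=\sigma(W\cap K)=V$. To find $V_1,V_2\in S$ with $\sigma(V_1)\neq V_2$ for all $\sigma\in\mathrm{Aut}(L/K)$ is then exactly to exhibit two distinct $\mathrm{Aut}(L/K)$-orbits in $S$ and take $V_1,V_2$ to be representatives of them.

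Thus everything reduces to showing that $\mathrm{Aut}(L/K)$ does not act transitively on $S$. Here I would invoke the standard fact that for a finite extension one has $|\mathrm{Aut}(L/K)|\leq[L:K]$, with equality precisely when $L/K$ is Galois (one way to see the strict inequality in our case: since $L/K$ is separable but not normal there is a $K$-embedding $L\hookrightarrow K^\sep$ with image $\neq L$, so $|\mathrm{Aut}(L/K)|<[L:K]_s=[L:K]$). Hence every $\mathrm{Aut}(L/K)$-orbit in $S$ has size at most $|\mathrm{Aut}(L/K)|<|S|$, so $S$ is not a single orbit; it breaks into at least two orbits, and picking $V_1,V_2$ in two different ones finishes the proof.

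If one prefers an explicit pair, pass to the Galois closure $\tilde L$ of $L/K$ and put $G=\mathrm{Aut}(\tilde L/K)$, $H=\mathrm{Aut}(\tilde L/L)$. By Fact~\ref{conjugate-theorem-normal-extensions} the $G$-action on the extensions of $V$ to $\tilde L$ is transitive, and by Lemma~\ref{number-of-extensions-equals-degree} there are $[\tilde L:K]=|G|$ of them, so the action is simply transitive; fixing one such $W_0$ and writing $\tau W_0\leftrightarrow\tau$ identifies the extensions of $V$ to $L$ with the right cosets $H\backslash G$, on which $\mathrm{Aut}(L/K)\cong N_G(H)/H$ acts by left translation. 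One checks that $H\tau$ and $H\tau'$ lie in the same orbit iff $N_G(H)\tau=N_G(H)\tau'$, so the orbits are indexed by $N_G(H)\backslash G$; since $L/K$ is not Galois, $H$ is not normal in $G$ and $[G:N_G(H)]\geq 2$, and one may take $V_1=W_0\cap L$ and $V_2=(\rho W_0)\cap L$ for any $\rho\in G\setminus N_G(H)$.

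I do not anticipate a real obstacle: the substance is already in Lemma~\ref{number-of-extensions-equals-degree}, which is where the PAC hypothesis does its work, forcing the number of extensions to an extension field to be the full degree rather than a proper divisor of it. The only point needing care is the bookkeeping in the explicit version — in particular, that the action of $\mathrm{Aut}(L/K)$ on $H\backslash G$ is well defined forces one to use $N_G(H)$ rather than $G$ — but the counting argument of the second paragraph avoids that issue completely.
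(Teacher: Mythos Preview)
Your proposal is correct and follows essentially the same approach as the paper: the paper's proof is precisely your counting argument from the second paragraph, observing that by Lemma~\ref{number-of-extensions-equals-degree} there are $[L:K]$ extensions of $V$ to $L$, while $|\mathrm{Aut}(L/K)|<[L:K]$ since $L/K$ is not Galois, so two extensions must lie in different $\mathrm{Aut}(L/K)$-orbits. Your explicit Galois-closure description is additional detail not present in the paper's (two-sentence) proof.
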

\begin{proof}
	By Lemma \ref{number-of-extensions-equals-degree}, 
	there are $[L:K]$ distinct extensions of $V$ to $L$, but since 
	$L/K$ is not Galois, the number of elements in $\mathrm{Aut}(L/K)$ 
	is strictly less than $[L:K]$. Thus there are two valuations $V_1$ and 
	$V_2$ on $L$ extending $V$ such that they are not conjugated over $K$. 
\end{proof}
\begin{lemma}
	\label{distinguished-primitive-element-wrt-valuation}
	Suppose that $(K,V)$ is a valued field where $K$ is PAC and $V$ is
	non-trivial. Suppose that $L/K$ is a finite separable algebraic extension
	and that $V_1, \ldots, V_n$ are all the distinct extensions of $V$ to $L$. 
	Then there exists a primitive element $a$ for the field extension $L/K$ 
	such that $a\in V_1\backslash \left(\cup_{i=2}^n V_i\right)$.
\end{lemma}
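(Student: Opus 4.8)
The plan is to obtain $a$ as a ``generic'' point of a small ball around a suitable starting point. Write $m=[L:K]$ and let $S:=V_1\setminus\bigcup_{i=2}^{n}V_i$ be the set of elements of $L$ we are aiming at. Two facts will be combined: $S$ is nonempty and, in fact, open in the topology on $L$ generated by $w_1,\dots,w_n$; and the non-primitive elements of $L/K$ form the zero set of a single nonzero polynomial on $L\cong K^m$. The mechanism making these compatible is that a basic open neighbourhood for this valuation topology is, in suitable coordinates, large enough to contain a product of $m$ infinite subsets of $K$, hence cannot lie inside that hypersurface; so $S$ meets the primitive locus.

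The first fact is the only really delicate point. The valuations $V_1,\dots,V_n$ are pairwise incomparable: they all restrict to $V$ on $K$ and $L/K$ is algebraic, so by Fact~\ref{two-extensions-of-valuation-equal} none of them contains another. Hence, by the Approximation Theorem for finitely many pairwise incomparable valuations --- using that $V$ is non-trivial, so each value group $w_iL^\times\supseteq vK^\times$ has arbitrarily negative elements --- there is $a_0\in L$ with $w_1(a_0)\ge 0$ and $w_i(a_0)<0$ for $i=2,\dots,n$, i.e.\ $a_0\in S$. (One can also see $S\ne\emptyset$ by commutative algebra: for $n\ge2$, prime avoidance in the integral closure $O:=\bigcap_i V_i$ of $V$ in $L$ gives $b\in O$ lying in the maximal ideal of every $V_i$, $i\ge2$, but not in that of $V_1$, and then $a_0:=1/b\in S$.)

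Next I would thicken $a_0$ to a ball inside $S$. Put $J:=\{x\in L:\ w_1(x)>0\text{ and }w_i(x)>w_i(a_0)\text{ for }2\le i\le n\}$; this is an additive subgroup of $L$ stable under multiplication by $V$ (recall $V=\{x\in K:v(x)\ge0\}$), and one checks directly from the ultrametric inequality that $a_0+J\subseteq S$. Since $v$ is non-trivial, fix $t\in K$ with $v(t)>0$; then for any $K$-basis $e_1,\dots,e_m$ of $L$ one has $t^Ne_1,\dots,t^Ne_m\in J$ once $N$ is large, so $J$ contains a $K$-basis $d_1,\dots,d_m$ of $L$, whence $a_0+\sum_{j=1}^{m}Vd_j\subseteq a_0+J\subseteq S$. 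Note $V$ is infinite: a non-trivially valued field is infinite, and a finite integral domain is a field.

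Finally I would use separability to describe the primitive elements. Writing a general element as $a=\sum_j x_jd_j$ with $x_j\in K$, the condition that $a$ be primitive over $K$ is precisely that $1,a,a^2,\dots,a^{m-1}$ be $K$-linearly independent, i.e.\ that $g(x_1,\dots,x_m)\ne0$, where $g\in K[X_1,\dots,X_m]$ is the determinant of the matrix whose columns are the coordinate vectors of $1,a,a^2,\dots,a^{m-1}$ relative to $d_1,\dots,d_m$ (this is a polynomial in the $X_j$). By the primitive element theorem a primitive element exists, so $g$ is not the zero polynomial. Writing $a_0=\sum_j\alpha_jd_j$, the $m$ sets $\alpha_j+V$ are infinite subsets of $K$, so $g$ cannot vanish on all of their product; picking $(c_1,\dots,c_m)\in V^m$ with $g(\alpha_1+c_1,\dots,\alpha_m+c_m)\ne0$, the element $a:=\sum_j(\alpha_j+c_j)d_j$ is a primitive element of $L/K$ lying in $a_0+\sum_jVd_j\subseteq S$, as required.
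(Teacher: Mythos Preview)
Your argument is correct, modulo one small slip: when you write ``fix $t\in K$ with $v(t)>0$; then $t^Ne_j\in J$ once $N$ is large'', you are implicitly assuming an Archimedean property of the value group, which need not hold. The fix is immediate: the finitely many bounds you need to exceed all lie in $w_iL^\times=vK^\times$ (using divisibility of $vK^\times$ for PAC fields, Fact~\ref{PAC-residue-field-and-value-group}), so simply choose $c\in K$ with $v(c)$ larger than all of them and set $d_j:=ce_j$.

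Your route is genuinely different from the paper's. You treat $S=V_1\setminus\bigcup_{i\ge2}V_i$ as an open set, thicken a point of $S$ to a coset $a_0+\sum_jVd_j$, and then use a polynomial (the determinant detecting primitivity) to find a primitive element inside that coset. The paper instead observes that \emph{every} element of $S$ is already primitive: if $a\in S$, then any extension of $V_1\cap K(a)$ to $L$ must restrict to $V$ on $K$, hence be some $V_i$, and must contain $a$, hence be $V_1$; so there is exactly one such extension, and since $K(a)$ is PAC, Lemma~\ref{number-of-extensions-equals-degree} forces $[L:K(a)]=1$. The paper also proves $S\ne\emptyset$ by a direct avoidance argument using the infinite residue field rather than by invoking the Approximation Theorem. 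The paper's argument is shorter and exploits the PAC hypothesis in an essential way via the extension-counting lemma; your argument is more classical and in fact barely uses PAC (only to identify $w_iL^\times$ with $vK^\times$, and even that can be avoided), so it would establish the lemma in greater generality.
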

\begin{proof}
	By \ref{PAC-residue-field-and-value-group}, the residue field $Kv$ is 
	infinite. Let
	$\Lambda:=\{\lambda_i\}_{i\in I}\subseteq V$ be a set of representatives
	of elements in $Kv$. Then $\Lambda \subseteq \cap_{i=1}^n V_i$.

	If $n=1$, then the conclusion is trivially true.  Thus we may assume that 
	$n\geq 2$. 

	First, we show that $V_1\backslash \left(\cup_{i=2}^n V_i\right)$ is not
	empty.  
	If $n=2$, then by the assumption that $V_1\neq V_2$, it follows from
	Fact \ref{two-extensions-of-valuation-equal} that $V_1\not\subseteq V_2$. 
	Thus $V_1\backslash V_2$ is not empty.
	If $n\geq 3$, then 
	suppose toward a contradiction that $V_1\subseteq \cup_{i=2}^n V_i$. Without
	loss of generality, we  may assume that the minimal number of $V_i$ 
	whose union contains $V_1$ is $k$ and $V_1\subseteq \cup_{i=2}^k V_i$. 
	Since $V_1, \ldots, V_n$ are distinct, $k\geq 3$.
	This means that there exists $s\in V_2\cap V_1$ and $t\in V_3\cap V_1$
	such that 
	$s \not\in \cup_{i=3}^k V_3$ and $t\not\in V_2\cup\left(\cup_{i=4}^k V_i
	\right)$. It follows that for all $\lambda \in \Lambda$, 
	$s+\lambda (t-s) \in V_1 
	\subseteq \cup_{i=2}^k V_i$. Note that $\Lambda$ is infinite.
	Therefore there exists $\lambda_1\neq \lambda_2$ and $2\leq i\leq k$
	such that $\lambda_1, 
	\lambda_2\in \Lambda$
	and both $p:=s+\lambda_1 (t-s)$ and $q:=s+\lambda_2 (t-s)$ are in 
	a common
	$V_i$. Then for $\mu = \frac{-\lambda_1}{\lambda_2-\lambda_1}\in V_i$, 
	$s=p+\mu(q-p)\in V_i$, and for 
	$\mu = \frac{1-\lambda_1}{\lambda_2-\lambda_1}\in V_i$, 
	$t=p+\mu(q-p)\in V_i$. Thus both $s$ and $t$
	are in $V_i$, contradicting the choices of $s$ and $t$.  Thus 
	$V_1\backslash \left(\cup_{i=2}^n V_i\right)$ is not empty.

	Now choose an element $a \in V_1\backslash\left(\cup_{i=2}^n V_i\right)$.
	Then any extension of the valuation ring $K(a)\cap V_1$ on $K(a)$
	to $L$ has to be the same as $V_1$. By
	\ref{number-of-extensions-equals-degree}, this means that $[L:K(a)]=1$,
	namely $L=K(a)$. Thus $a$ is a primitive element of $L/K$. 
\end{proof}
\begin{lemma}
	\label{distinguishing-nonconjugated-valuations}
	Suppose that $(K,V)$ is a valued field where $K$ is PAC, $V$ is 
	non-trivial, and $L/K$ is a finite separable algebraic extension. 
	Suppose that $V_1$ and $V_2$ are two extensions of $V$
	such that for all $\sigma \in \mathrm{Aut}(L/K)$, $\sigma(V_1)\neq V_2$.
	Then there exists a primitive element $a$ for the extension $L/K$ with 
	minimal polynomial $f(X)\in K[X]$ satisfying the following properties:
	\begin{enumerate}
		\item $a \in V_1 \backslash V_2$;
		\item for all $r\in L$ such that $f(r)=0$, $r\not\in V_2$.
	\end{enumerate}
\end{lemma}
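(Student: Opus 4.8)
The plan is to reduce this to Lemma \ref{distinguished-primitive-element-wrt-valuation}. First I would record the routine but crucial observation that, for any primitive element $a$ of $L/K$ with minimal polynomial $f\in K[X]$, the roots of $f$ lying in $L$ are exactly the elements $\sigma(a)$ with $\sigma\in\mathrm{Aut}(L/K)$: a root $r\in L$ gives a $K$-embedding $K(a)=L\to L$ with $a\mapsto r$, and since $[K(r):K]=\deg f=[L:K]$ this embedding is surjective, hence an automorphism of $L/K$; conversely each $\sigma(a)$ is a root of $f$. Consequently, condition (2) is equivalent to the statement that $a\notin\sigma^{-1}(V_2)$ for every $\sigma\in\mathrm{Aut}(L/K)$, that is, $a\notin\cup_{\sigma\in\mathrm{Aut}(L/K)}\sigma(V_2)$.

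Next I would note that $\mathrm{Aut}(L/K)$ permutes the finite set of extensions of $V$ to $L$: for $\sigma\in\mathrm{Aut}(L/K)$ the image $\sigma(V_2)$ is again a valuation ring on $L$ with $\sigma(V_2)\cap K=\sigma(V)=V$. The hypothesis that $\sigma(V_1)\neq V_2$ for all $\sigma$ becomes, upon replacing $\sigma$ by $\sigma^{-1}$, the statement that $\sigma(V_2)\neq V_1$ for all $\sigma$; hence $V_1$ does not belong to the orbit $\{\sigma(V_2):\sigma\in\mathrm{Aut}(L/K)\}$.

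Now enumerate all distinct extensions of $V$ to $L$ as $W_1,\ldots,W_n$ (there are exactly $[L:K]$ of them by Lemma \ref{number-of-extensions-equals-degree}, though the count is not needed), arranged so that $W_1=V_1$; by the previous step the whole orbit of $V_2$ --- in particular $V_2$ itself, as its image under the identity --- lies in $\{W_2,\ldots,W_n\}$. Applying Lemma \ref{distinguished-primitive-element-wrt-valuation} produces a primitive element $a$ of $L/K$ with $a\in W_1\backslash\left(\cup_{i=2}^n W_i\right)=V_1\backslash\left(\cup_{i=2}^n W_i\right)$; let $f$ be its minimal polynomial over $K$. Then $a\in V_1$ and $a\notin V_2$, which is (1); and for any root $r\in L$ of $f$, writing $r=\sigma(a)$ with $\sigma\in\mathrm{Aut}(L/K)$ we have $r\in V_2$ if and only if $a\in\sigma^{-1}(V_2)$, which is impossible because $\sigma^{-1}(V_2)$ is one of $W_2,\ldots,W_n$ --- this is (2).

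I do not expect a genuine obstacle here: all the real valuation-theoretic work, namely producing a primitive element that avoids a prescribed finite family of extensions of $V$ while lying inside $V_1$, has already been carried out in Lemma \ref{distinguished-primitive-element-wrt-valuation}, and the present argument merely repackages it. The only points requiring care are the identification of the roots of $f$ in $L$ with the $\mathrm{Aut}(L/K)$-orbit of $a$ and the bookkeeping between $\sigma$ and $\sigma^{-1}$ when translating the non-conjugacy hypothesis into a statement about which valuations $a$ must avoid.
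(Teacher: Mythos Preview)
Your argument is correct, and it is noticeably more direct than the route taken in the paper. Both proofs begin identically, invoking Lemma \ref{distinguished-primitive-element-wrt-valuation} to obtain a primitive element $a$ lying in $V_1$ but outside every other extension of $V$ to $L$. The divergence is in establishing property (2). The paper passes to the Galois closure $\tilde{L}$ of $L/K$, sets up the full correspondence between right cosets of $H=\Gal(\tilde{L}/L)$ in $G=\Gal(\tilde{L}/K)$ and extensions $V_1,\ldots,V_n$, and then argues via coset manipulations that a root $\tau(a)\in L\cap V_2$ would force $\tau\in H\sigma_2$ and hence $\tau(V_1)=V_2$. You bypass all of this by observing directly that the roots of $f$ lying in $L$ are precisely the $\mathrm{Aut}(L/K)$-conjugates of $a$, so that $r=\sigma(a)\in V_2$ is equivalent to $a\in\sigma^{-1}(V_2)$, and $\sigma^{-1}(V_2)$ is one of the forbidden $W_i$ by the non-conjugacy hypothesis. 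Your approach buys simplicity and avoids the bookkeeping in $\tilde{L}$; the paper's approach, while heavier, makes the Galois-theoretic structure behind the phenomenon more explicit, which may be what the author wanted to display. Either way, the mathematical content is the same and your proof is complete.
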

\begin{proof}
	Denote by $V_3, \ldots, V_n$ the rest of the all the distinct extensions
	of $V$ to $L$, where $n=[L:K]$ by Lemma
	\ref{number-of-extensions-equals-degree}.
	Then by Lemma \ref{distinguished-primitive-element-wrt-valuation}, 
	there exists a primitive element $a$ for the extension $L$ such that 
	$a \in V_1 \backslash \left(\cup_{i=2}^n V_i\right)$. Denote the 
	minimal polynomial of $a$ over $K$ by $f(X)\in K[X]$, the Galois 
	closure of $L/K$ by $\tilde{L}$, the Galois group of $\tilde{L}/K$
	by $G$, the Galois group of $\tilde{L}/L$ by $H$, and the order of $H$ by 
	$d$. Then the $n$ roots of $f(X)$ over $K$ are all elements of $\tilde{L}$. 
	By Fact \ref{conjugate-theorem-normal-extensions} and by Lemma 
	\ref{number-of-extensions-equals-degree}, for any valuation $W$ on $\tilde{L}$
	extending $V$ and any $\sigma \in G$, if $\sigma \neq 1$, then $\sigma(W)
	\neq W$. For each $i=1, \ldots, n$, let the 
	valuation rings $W_{i1}, \ldots, W_{id}$ be all of 
	the distinct extensions of $V_i$ on $L$ to $\tilde{L}$.  
	
	It then follows that for all $\sigma \in G$,  $\sigma \in H$ 
	if and only if $\sigma(W_{11})\cap L= V_1$, if and only if $\sigma(W_{11})
	\in\{W_{11}, \ldots, W_{1d}\}$. Indeed, if $\sigma \in H$, then $\sigma$ 
	fixes $L$ element-wise. So $V_1= \sigma(V_1) = \sigma(W_{11}\cap L) =
	\sigma(W_{11})\cap L$. Meanwhile, there are exactly $d$ valuations on
	$\tilde{L}$ extending $V_1$, so there are exactly $d$ elements in $G$ 
	each of whose action on $W_{11}$ extends $V_1$. We now have found all 
	the $d$ elements of $H$ satisfies the later property, so these must be 
	all such elements.  

	Similarly, for each $i=1, \ldots, n$, it is true that
	$\sigma\in H$ if and only if $\sigma(W_{i1})\cap L = V_i$.
	For each $i=1,\ldots, n$, let $\sigma_i \in G$ be the unique element that 
	maps $W_{11}$ to $W_{i1}$. Then for all
	$\sigma \in G$, $\sigma \in  H\sigma_i$ if and only if 
	$\sigma(W_{11})\cap L = V_i$, if and only if $\sigma(W_{11})
	\in \{ W_{i1}, \ldots, W_{id}\}$.  Because $V_1, \ldots, V_n$ are 
	distinct, it follows that $H\sigma_1, \ldots, H\sigma_n$ are exactly 
	all the distinct right cosets of $H$ in $G$. 

	For $\sigma\in G$, $\sigma \in H$ if and only if $\sigma(W_{11})
	\in \{W_{11}, \ldots, W_{1d}\}$, if and only if $a \in \sigma(W_{11})$. 
	Thus, for $\sigma, \tau \in G$, $\tau(a) \in \sigma(W_{11})$ 
	if and only if $\tau^{-1}\sigma \in H$ if and only if $\sigma H = \tau H$.
	So for any root $\tau(a)\in L$ of $f(X)$, where $\tau\in G$,
	we have $\tau(L)=L$, so $\tau H \tau^{-1} = H$, 
	and $\tau H = H\tau$. 
	If $\tau(a)$ were in $V_2$, 
	then $\tau(a)\in \sigma_2(W_{11})$, so $\tau H = \sigma_2H$.
	This means that $\sigma_2\in \sigma_2H = \tau H = H\tau$. Thus 
	$\tau \in H\sigma_2$, whence $\tau(V_1)=
	\tau(W_{11}\cap L) = \tau(W_{11})\cap L = V_2$, contradicting the 
	assumption that for all $\sigma\in \mathrm{Aut}(L/K)$, $\sigma(V_1)\neq V_2$.
	Thus, none of the roots of $f(X)$ in $L$ are contained in $V_2$.
\end{proof}

It is known (see for example Lemma 20.6.3 of \cite{MR2445111}) that if $E$ and $F$
and field extensions of a common subfield $K$ with the property that
every irreducible separable 
polynomial over $K$ has a root in $E$ if and only if it has a root in $F$, 
then $E$ and $F$ are $K$-isomorphic. In the case of valued fields, this is not true
in general. However, under a more stringent condition, there is still a similar result.

\begin{lemma}
	\label{extending-substructure-to-relative-separable-closure}
	Suppose that $(E_1, V_1)$ and $(E_2, V_2)$ are two valued fields 
	extending the valued field $(K,V)$. If every irreducible 
	separable polynomial over $K$
	that has a root in $E_1$ splits both in $E_1$ and $E_2$, 
	and every irreducible separable
	polynomial that has a root in $E_2$ splits both in $E_1$ and
	and $E_2$. Then 
	there is an $\Ldiv$ isomorphism $\sigma: E_1\cap K^\sep 
	\to E_2\cap K^\sep$ (with their induced valuations respectively)
	which restricts to $\mathrm{id}_K$ on $(K,V)$.
\end{lemma}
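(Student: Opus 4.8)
The plan is to reduce to the field-theoretic statement recalled just above, and then to correct the $K$-isomorphism it produces so that it also matches the valuations, using the fact that all extensions of a valuation to a normal extension are conjugate.

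First I would let $N_i$ denote the relative separable algebraic closure of $K$ in $E_i$, that is, the set of elements of $E_i$ separably algebraic over $K$; up to the usual identifications this is $E_i\cap K^\sep$, and it carries the induced valuation $W_i:=V_i\cap N_i$, a valuation ring on $N_i$ with $W_i\cap K=V$. The crucial observation is that the hypothesis forces $N_i/K$ to be normal, hence Galois. Indeed, if $f\in K[X]$ is irreducible with a root $a\in N_i$, then $f$ is separable (since $a$ is separable over $K$) and has a root in $E_i$, so by hypothesis $f$ splits in $E_i$; as every root of $f$ is separable over $K$, all of them lie in $N_i$, so $f$ splits in $N_i$. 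The same hypothesis also shows that, for $f\in K[X]$ irreducible and separable, the conditions ``$f$ has a root in $E_1$'', ``$f$ has a root in $E_2$'', ``$f$ has a root in $N_1$'' and ``$f$ has a root in $N_2$'' are all equivalent. Hence, by the field-theoretic fact above (Lemma 20.6.3 of \cite{MR2445111}), there is a $K$-isomorphism $\sigma_0:N_1\to N_2$.

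Next I would transport $W_1$ along $\sigma_0$, obtaining a valuation ring $\sigma_0(W_1)$ on $N_2$; since $\sigma_0$ fixes $K$ pointwise, $\sigma_0(W_1)\cap K=V$, so both $\sigma_0(W_1)$ and $W_2$ extend $V$ to $N_2$. Because $N_2/K$ is normal, Fact \ref{conjugate-theorem-normal-extensions} yields $\tau\in\mathrm{Aut}(N_2/K)$ with $\tau(\sigma_0(W_1))=W_2$. Then $\sigma:=\tau\circ\sigma_0:N_1\to N_2$ is a $K$-isomorphism of fields carrying $W_1$ onto $W_2$, equivalently $w_1(x)\le w_1(y)\iff w_2(\sigma x)\le w_2(\sigma y)$; thus $\sigma$ is the desired $\Ldiv$-isomorphism $E_1\cap K^\sep\to E_2\cap K^\sep$ restricting to $\mathrm{id}_K$ on $(K,V)$.

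Given the cited results this is largely a matter of assembly, so there is no serious obstacle; the one point that really has to be noticed — and is the crux of the argument — is that the strengthened hypothesis (splitting, rather than merely having a root) makes the relative separable closures $N_i$ normal over $K$, for it is exactly this normality that permits Fact \ref{conjugate-theorem-normal-extensions} to be invoked to straighten out the valuation once the field isomorphism is in hand. (One should also note, in connection with the notation $E_i\cap K^\sep$, that since $N_i/K$ is Galois its image in a fixed separable closure of $K$ does not depend on the chosen $K$-embedding, so this notation is unambiguous.)
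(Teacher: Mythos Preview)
Your argument is correct, and rests on the same two observations as the paper's proof: the splitting hypothesis makes each $N_i=E_i\cap K^\sep$ normal over $K$, and valuations on a normal extension are conjugate (Fact~\ref{conjugate-theorem-normal-extensions}). The organization, however, differs. The paper proceeds level by level: for each finite Galois $L/K$ it shows $E_1\cap L$ and $E_2\cap L$ are $K$-isomorphic and normal, applies Fact~\ref{conjugate-theorem-normal-extensions} at that finite level to obtain an $\Ldiv$-isomorphism $\sigma_L$, and then takes an inverse limit over all such $L$. You instead work globally: you invoke the field-theoretic Lemma~20.6.3 of \cite{MR2445111} once to obtain a $K$-isomorphism $\sigma_0:N_1\to N_2$, and then apply Fact~\ref{conjugate-theorem-normal-extensions} once to the (possibly infinite) normal extension $N_2/K$ to conjugate $\sigma_0(W_1)$ onto $W_2$. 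Your route is shorter and avoids the compactness/inverse-limit step, at the cost of citing the field-theoretic isomorphism as a black box; the paper's route is more self-contained, effectively reproving that isomorphism at each finite stage. Both are valid, and your remark that Fact~\ref{conjugate-theorem-normal-extensions} is stated without a finiteness hypothesis is what makes the global application legitimate.
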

\begin{proof}
	Since normal extensions are self conjugated, it is harmless
	to slightly abuse the notation. 
	Let $L$ be a finite Galois extension of $K$. 
	
	Suppose that 
	$E_1\cap L=K(a)$. Then the minimal polynomial $f(X)$ 
	of $a$ splits in $E_1$
	and $E_2$.  Thus $E_1\cap L$ is normal over $K$. Suppose $b\in E_2$
	is a root of $f(X)$, then $K(b)$ is also normal over $K$ and 
	$E_1\cap L\cong_K K(b)\subseteq E_2\cap L$.

	By the same reasoning, $E_2\cap L$ is $K$-isomorphic to a subfield of 
	$E_1\cap L$. This means that $[K(b):K]\leq [E_2\cap L:K]\leq [K(b):K]$. 
	Thus $K(b)=E_2\cap L$.  

	Therefore $E_1\cap L$ is $K$-isomorphic to $E_2\cap L$, both of which are 
	normal over $K$. By Fact \ref{conjugate-theorem-normal-extensions}, 
	there is an $\Ldiv$ isomorphism $\sigma_L: E_1\cap L
	\to E_2\cap L$ (with their induced valuations respectively)
	which restricts to $\mathrm{id}_K$ on $(K,V)$.

	Thus, so far we have show that for each finite Galois extension $L$, 
	the set 
	$$A_L:=\{\sigma_L:E_1\cap L\to E_2\cap L 
	\text{ an $\Ldiv$-isomorphism that restricts to $\mathrm{id}_K$ }\}$$
	is not empty. Thus the inverse limit 
	$$ H:=\lim_{\longleftarrow \atop L } A_L,$$
	taken over all $L$ finite Galois over $K$, is not empty. Any element 
	$\sigma \in H$ is 
	an $\Ldiv$ isomorphism $\sigma: E_1\cap K^\sep 
	\to E_2\cap K^\sep$ (with their induced valuations respectively)
	which restricts to $\mathrm{id}_K$ on $(K,V)$.
\end{proof}

\begin{lemma}\label{algebraically-independent-amlgmation}
	Suppose that $(E_1, V_1)$, $(E_2, V_2)$ and $(F, W)$ are three valued-field
	extension of a valued field 
	$(K,V)$, where $(F,W)$ is a $(|E_1|\times|E_2|)^+$-saturated 
	model of $\ACVF$. Then there exist $\Ldiv$-embeddings $\sigma_1:E_1\to F$ 
	and $\sigma_2: E_2\to F$ such that:
	\begin{enumerate}
		\item the restriction of $\sigma_1$ and $\sigma_2$ on $K$ are 
			the identity map of $K$;
		\item $\sigma_1(E_1)$ and $\sigma_2(E_2)$ are algebraically 
			independent over $K$.
	\end{enumerate}
\end{lemma}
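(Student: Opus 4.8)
\emph{Strategy.} The plan is to build $\sigma_1$ and $\sigma_2$ in turn. The embedding $\sigma_1$ is obtained by brute force: extend $(E_1,V_1)$ to an algebraically closed valued field and embed the latter into the very saturated model $(F,W)$ via quantifier elimination for $\ACVF$. For $\sigma_2$ one first constructs, by purely algebraic means, a model of $\ACVF$ into which both $(A,W|_A)$, where $A:=\sigma_1(E_1^{\alg})$, and an algebraically closed valued extension of $(E_2,V_2)$ embed over $K$ with algebraically independent images, and then transfers this configuration into $(F,W)$ by saturation. Throughout I assume $V$ is non-trivial (the remaining cases need only cosmetic changes), so that the algebraically closed valued fields arising below are all models of $\ACVF$.

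\emph{The map $\sigma_1$.} Extend $V_1$ to a valuation $W_1$ on $E_1^{\alg}$ by Fact~\ref{extension-of-valuation-exists}, so $(E_1^{\alg},W_1)\models\ACVF$. Since $(K,V)$ is a substructure of $(E_1^{\alg},W_1)$, the map $\mathrm{id}_K$ is a partial $\Ldiv$-embedding of it into $(F,W)$, and $F$ is $|E_1^{\alg}|^{+}$-saturated; so Facts~\ref{ACVF-QE} and~\ref{use-embedding-to-get-QE} give an $\Ldiv$-embedding $\tau_1\colon(E_1^{\alg},W_1)\to(F,W)$ extending $\mathrm{id}_K$. Put $\sigma_1:=\tau_1|_{E_1}$ and $A:=\tau_1(E_1^{\alg})$, so that $(A,W|_A)\models\ACVF$.

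\emph{Reduction of $\sigma_2$ to an independent amalgam.} Extend $V_2$ to a valuation $\tilde V_2$ on $E_2^{\alg}$, so $(E_2^{\alg},\tilde V_2)\models\ACVF$. Let $K_0\subseteq A$ be the relative algebraic closure of $K$ in $A$ (a copy of $K^{\alg}$) and $V_0:=W|_{K_0}$. As $K^{\alg}/K$ is normal, Fact~\ref{conjugate-theorem-normal-extensions} lets us choose the $K$-embedding $K^{\alg}\hookrightarrow E_2^{\alg}$ so as to identify $(K_0,V_0)$ with a valued subfield of $(E_2^{\alg},\tilde V_2)$; thus $(K_0,V_0)$ is a common valued subfield of $(A,W|_A)$ and $(E_2^{\alg},\tilde V_2)$. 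Suppose we have produced a model $(G,V_G)\models\ACVF$ with $\Ldiv$-embeddings $\iota_A\colon(A,W|_A)\hookrightarrow(G,V_G)$ and $\iota_2\colon(E_2^{\alg},\tilde V_2)\hookrightarrow(G,V_G)$ agreeing on $K_0$ and whose images are algebraically independent over $K_0$, hence (as $K_0/K$ is algebraic) over $K$. Since $|G|\le|E_1|\times|E_2|$, the field $F$ is $|G|^{+}$-saturated, so Fact~\ref{use-embedding-to-get-QE} applied to the partial $\Ldiv$-embedding $\iota_A^{-1}\colon\iota_A(A)\to A\subseteq F$ gives an $\Ldiv$-embedding $g\colon(G,V_G)\to(F,W)$ with $g\circ\iota_A=\mathrm{id}_A$. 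Setting $\sigma_2:=(g\circ\iota_2)|_{E_2}$, we get an $\Ldiv$-embedding with $\sigma_2|_K=\mathrm{id}_K$ (because $\iota_2$ and $\iota_A$ agree on $K$ and $g\circ\iota_A=\mathrm{id}$), and, since field embeddings preserve algebraic independence, $\sigma_2(E_2)\subseteq g(\iota_2(E_2^{\alg}))$ is algebraically independent over $K$ from $A\supseteq\sigma_1(E_1)$; this yields (1) and (2).

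\emph{Construction of $(G,V_G)$, which is the main obstacle.} As $K_0$ is algebraically closed, $A/K_0$ is regular, so $A\otimes_{K_0}E_2^{\alg}$ is an integral domain; let $D$ be its fraction field, inside which $A$ and $E_2^{\alg}$ are algebraically independent over $K_0$. It remains to equip $D$ with a valuation restricting to $W|_A$ on $A$ and to $\tilde V_2$ on $E_2^{\alg}$ (then $(G,V_G)$ is obtained by extending it to $D^{\alg}$ via Fact~\ref{extension-of-valuation-exists}). For this I would invoke Chevalley's extension theorem: writing $O_A,O_2,O_0$ for the valuation rings of $W|_A,\tilde V_2,V_0$, one has $O_0=O_A\cap K_0=O_2\cap K_0$, and, using that a valuation ring is flat over any valuation subring, one checks that $R:=O_A\otimes_{O_0}O_2$ embeds into $D$ and contains $O_A$ and $O_2$, while $R/(\mathfrak m_{O_A}R+\mathfrak m_{O_2}R)$ surjects onto the nonzero ring $k_{O_A}\otimes_{k_{O_0}}k_{O_2}$. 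Hence $\mathfrak m_{O_A}R+\mathfrak m_{O_2}R$ lies in a prime $\mathfrak q$ of $R$, which necessarily contracts to $\mathfrak m_{O_A}$ on $O_A$ and to $\mathfrak m_{O_2}$ on $O_2$; Chevalley's theorem then provides a valuation ring $O_D$ of $D$ dominating $(R,\mathfrak q)$, and a routine comparison among the valuation rings of $A$ (resp.\ $E_2^{\alg}$) lying over $O_A$ (resp.\ $O_2$) gives $O_D\cap A=O_A$ and $O_D\cap E_2^{\alg}=O_2$. I expect this amalgamation of valuations over the algebraically closed base $K_0$ to be the only genuinely delicate point; the rest is routine bookkeeping with Facts~\ref{extension-of-valuation-exists}, \ref{conjugate-theorem-normal-extensions}, \ref{ACVF-QE} and~\ref{use-embedding-to-get-QE}.
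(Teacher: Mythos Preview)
Your argument is correct, but the paper proceeds quite differently. The paper works entirely inside $F$: it first uses QE and saturation to obtain \emph{any} $\Ldiv$-embeddings $f_1\colon E_1\to F$ and $f_2\colon E_2\to F$ over $K$, without regard to independence; then, choosing a transcendence basis $\{e_i\}_{i\in I}$ of $E_1$ over $K$, it uses saturation to find $\{\alpha_i\}_{i\in I}\subseteq F$ algebraically independent over $L:=f_1(E_1)\cdot f_2(E_2)$ with each $w(\alpha_i)$ above every element of $w(L^\times)$. The ultrametric inequality then forces $w\bigl(h(f_1(e_i)+\alpha_i)_i\bigr)=w\bigl(h(f_1(e_i))_i\bigr)$ for every $h\in K[\vec X]$, so $f_1(e_i)\mapsto f_1(e_i)+\alpha_i$ defines an $\Ldiv$-embedding of $K(f_1(e_i))_i$ into $F$; one more application of QE extends this to all of $f_1(E_1)$, and the shifted image is automatically algebraically independent from $f_2(E_2)$ over $K$. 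Your approach instead builds the independent amalgam \emph{externally}: you pass to algebraic closures, tensor over $K^{\alg}$, and put a common extending valuation on the fraction field via the Chevalley extension theorem and a flatness argument, then push the whole configuration into $F$ over $A$ by one final use of QE and saturation. The paper's route is shorter and needs nothing beyond the ultrametric trick; yours is more modular (the valuation-amalgamation over an algebraically closed base is a stand-alone statement) and would generalise more readily to settings where the internal shifting trick is unavailable. Your explicit assumption that $V$ is non-trivial is harmless here, since the only use of the lemma in the paper is inside the Valuation Theoretic Embedding Lemma, where the base valuation is non-trivial anyway.
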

\begin{proof}
	By Fact \ref{ACVF-QE}, $\ACVF$ admits quantifier elimination, therefore, 
	by Fact \ref{use-embedding-to-get-QE}, we can find two $\Ldiv$-embeddings
	$f_1: E_1\to F$ and $f_2: E_2\to F$, such that both $f_1$ and $f_2$ 
	are $\mathrm{id}_K$ when restricted to the subfield $K$. Take 
	$\{e_i\}_{i\in I}$ to be a transcendence basis of $E_1$ over $K$. 
	Since $F$ is 
	$(|E_1|\times|E_2|)^+$-saturated, there exists a set
	of elements $\{\alpha_i\}_{i\in I}$ in $F$ 
	algebraically independent over the compositum $L:=f_1(E_1).f_2(E_2)$
	with each of the $w(\alpha_i)$ larger than any element in $w(L^\times)$.
	It then follows that $w(\alpha_i + f_1(e_i))= w(f_1(e_i))$ for all $i\in I$ 
	and 
	the set  $\{\alpha_i + f_1(e_i)\}_{i\in I}$ is algebraically 
	independent over $L$. This in turn 
	implies that for any multivariate polynomial $h$ with 
	coefficients in $f_1(E_1)$, $w(h(\alpha+f_1(e_i))_{i\in I})= 
	w(h(f_1(e_i))_{i\in I})$.
	Thus mapping $f_1(e_i)$ to $\alpha_i + f_1(e_i)$, 
	we obtain an $\Ldiv$-embedding of $K(f_1(e_i))_{i\in I}$ to 
	$F$, which extends to an $\Ldiv$-embedding (by quantifier elimination again)
	$g: f_1(E_1) \to F$ which necessarily has the 
	property that $g(f_1(E_1))$ is algebraically independent 
	from $f_2(E_2)$ over $K$. Thus, $\sigma_1:=g\circ f_1$ and $\sigma_2:=f_2$
	satisfy the desired properties. 
\end{proof}
\begin{lemma}
	\label{lemma-downward-morphism-from-a-regular-extension}
	Suppose that $K$ is a PAC field with a non-trivial valuation $V$, 
	$(L,W)$ is a valued field extension of $(K,V)$ where $L/K$ is regular.
	Suppose that $E$ is a subfield of $L$ contained in a $K$-subalgebra 
	$R$ of $L$, where $R$ is generated by not more that $|E|$-elements over $K$.
	Suppose that $(K,V)$ is $|E|^+$-saturated in $\Ldiv$. Then 
	there is a ring homomorphism $\phi: R\to K$ satisfying the following 
	conditions simultaneously:
	\begin{enumerate}
		\item $\phi$ restricts to the identity map on $K$;
		\item $\phi$ restricts to an $\Ldiv$-embedding of $(E,W\cap E)$
			into $(K,V)$.
	\end{enumerate}
\end{lemma}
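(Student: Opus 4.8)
The plan is to deduce the statement from the $|E|^{+}$-saturation of $(K,V)$ by realizing a suitable partial quantifier-free $\Ldiv$-type. Write $R=K[r_i\mid i\in I]$ with $|I|\le|E|$, let $\mathfrak a\subseteq K[X_i\mid i\in I]$ be the ideal of all polynomial relations over $K$ satisfied by $(r_i)_{i\in I}$, so $R\cong K[X_i\mid i\in I]/\mathfrak a$, and for each $e\in E$ fix a lift $g_e\in K[X_i\mid i\in I]$ with $g_e\big((r_i)_i\big)=e$. Since $K[X_i\mid i\in I_0]$ is Noetherian for each finite $I_0\subseteq I$ and $\mathfrak a$ is the union of the ideals $\mathfrak a\cap K[X_i\mid i\in I_0]$, the ideal $\mathfrak a$ has a generating family $\{f_j\mid j\in J\}$ with $|J|\le|E|$. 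I would then consider the partial $\Ldiv$-type $\pi\big((X_i)_{i\in I}\big)$, over the set $A$ of coefficients occurring in the $f_j$ and the $g_e$ (so $|A|\le|E|$), asserting: $f_j(\bar X)=0$ for $j\in J$; $g_e(\bar X)\ne 0$ for $0\ne e\in E$; $1\mid g_e(\bar X)$ for $e\in W\cap E$; and $\lnot\,(1\mid g_e(\bar X))$ for $e\in E\setminus W$. A realization $(x_i)_{i\in I}$ of $\pi$ in $(K,V)$ defines a $K$-algebra homomorphism $\phi\colon R\to K$ with $r_i\mapsto x_i$ (as the $f_j$ generate $\mathfrak a$), so $\phi$ is the identity on $K$; moreover $E\setminus\{0\}\subseteq R^{\times}$ (if $0\ne e\in E$ then $e^{-1}\in E\subseteq R$), so $\phi$ is injective on $E$, and the valuation clauses force $e\in W\cap E\iff\phi(e)\in V$ for every $e\in E$, i.e.\ $\phi$ restricts to an $\Ldiv$-embedding of $(E,W\cap E)$ into $(K,V)$. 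So it suffices to show $\pi$ is finitely satisfiable in $(K,V)$.

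For this, fix a finite $\pi_0\subseteq\pi$ and choose a finite $I_0\subseteq I$ that contains all variables occurring in $\pi_0$ and is large enough that every $e$ occurring in $\pi_0$ lies in $R_0:=K[r_i\mid i\in I_0]$; list these as $e_1,\dots,e_l$ (all nonzero). Then $R_0$ is a finitely generated $K$-subalgebra of $L$, hence a domain, and $\Frac(R_0)$, being a subfield of $L$, is linearly disjoint from $K^\alg$ over $K$; thus $X_0:=\mathrm{Spec}(R_0)$ is a non-empty absolutely irreducible affine $K$-variety and each $e_k$ is a regular function on it. It suffices to find a point $\mathbf b\in X_0(K)$ with $e_k(\mathbf b)\ne 0$ and $v\big(e_k(\mathbf b)\big)$ of the same sign as $w(e_k)$ for every $k$: evaluation at $\mathbf b$ is then a $K$-algebra homomorphism $R_0\to K$ whose coordinates at the variables of $\pi_0$ realize $\pi_0$.

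Producing $\mathbf b$ is the crux, and here the plan is to combine model-completeness of $\ACVF$ with Kollár's density theorem. Using Fact \ref{extension-of-valuation-exists}, extend $W$ to a valuation $u$ on an algebraically closed field $\Omega\supseteq L$ and fix an algebraic closure $K^\alg$ of $K$ inside $\Omega$. As $u$ extends $V$, the restriction $u|_{K^\alg}$ is a non-trivial valuation, so $(K^\alg,u|_{K^\alg})$ and $(\Omega,u)$ both model $\ACVF$, the former being a substructure of the latter; by model-completeness of $\ACVF$ (Fact \ref{ACVF-QE}), $(K^\alg,u|_{K^\alg})\preceq(\Omega,u)$. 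Now $\mathbf r:=(r_i)_{i\in I_0}\in X_0(L)\subseteq X_0(\Omega)$ satisfies $e_k(\mathbf r)=e_k\ne 0$ and $u\big(e_k(\mathbf r)\big)=w(e_k)$ for all $k$, so the existential $\Ldiv$-sentence (with parameters in $K$) expressing ``$X_0$ has a point on which every $e_k$ is nonzero with valuation of the prescribed sign'' holds in $(\Omega,u)$, hence in $(K^\alg,u|_{K^\alg})$; this yields $\mathbf a\in X_0(K^\alg)$ with $e_k(\mathbf a)\ne 0$ and $u\big(e_k(\mathbf a)\big)$ of the required sign. Since $K$ is PAC and $u|_{K^\alg}$ is non-trivial, Fact \ref{Kollar-density} says $X_0(K)$ is dense in $X_0(K^\alg)$ for the $u|_{K^\alg}$-topology; picking $\mathbf b\in X_0(K)$ close enough to $\mathbf a$ that $u\big(e_k(\mathbf b)-e_k(\mathbf a)\big)>u\big(e_k(\mathbf a)\big)$ for all $k$, the ultrametric inequality gives $u\big(e_k(\mathbf b)\big)=u\big(e_k(\mathbf a)\big)$, so $\mathbf b$ works. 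This establishes finite satisfiability of $\pi$, and $|E|^{+}$-saturation then produces the desired $\phi$.

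The two points that need care are the cardinality bookkeeping guaranteeing that $\pi$ lives over a parameter set of size at most $|E|$ --- which rests on the Noetherianity of $K[X_i\mid i\in I_0]$ for finite $I_0$ --- and the descent of the valuation-theoretic condition from the ambient algebraically closed valued field down to $K$, where model-completeness of $\ACVF$ together with Kollár's density theorem (Fact \ref{Kollar-density}) do the work; I expect the latter to be the main conceptual obstacle, the remainder being routine.
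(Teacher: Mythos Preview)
Your proof is correct and follows essentially the same strategy as the paper's: build a partial quantifier-free $\Ldiv$-type of size $\le|E|$ encoding the algebraic relations among generators of $R$ together with the valuation data on $E$, and prove finite satisfiability by descending from $(\Omega,u)$ to $(K^\alg,u|_{K^\alg})$ via model-completeness of $\ACVF$ and then to $K$ via Kollár's density theorem (Fact~\ref{Kollar-density}). The only cosmetic difference is that the paper records all valuation comparisons $v(p(\vec X))\ge v(q(\vec X))$ among polynomial images, whereas you more parsimoniously record just membership in the valuation ring for each $e\in E$ --- which suffices because $E$ is a field, so $e_1\mid e_2$ reduces to $e_2e_1^{-1}\in W\cap E$.
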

\begin{proof}
	We may assume that $E\backslash K$ is 
	not empty. Let $k$ be the prime field of $K$.
	By assumption, $R=K[x_i]_{i\in J}$, where $J$ is an index set of 
	cardinality $|E|$ and $\{x_i\}_{i\in J} \subseteq E\backslash K$. 
	Then for any finite subset 
	$S\subseteq J$, $K(x_i)_{i\in S}$ is a regular extension of $K$. This 
	means that $(x_i)_{i\in S}$ is a generic point of
	an absolutely affine algebraic set $V_S$ defined over $K$. Let $k_S$ be 
	a finitely generated (over $k$) subfield (of $K$) of definition of $V_S$
	and
	$(\gamma_j)_{j\in J_S}$ be a finite set of generators for $k_S/k$. 
	Let $C$ be the compositum of all the fields of definition $k_S$. Then 
	$|C|\leq |E|$.
	Let $I_S$ be the vanishing ideal of $(x_i)_{i\in S}$ in $K[X_i]_{i\in S}$. 
	Then $I_S$ is finitely generated by elements in $C[X_i]_{i\in S}$, 
	let $(f_l)_{l\in L_S}$ be a finite set of generators.
	
	In the following, $\vec{x}$ denotes a tuple from the set 
	$\{x_i\}_{i\in J}$. 
	Let $\Sigma$ be the following set of formulas in free variables 
	$\{X_j\}_{j\in J}$
	\begin{align*}
		&\left(\cup_{S\subset J \atop \text{finite}}
		\{f_l(\vec{X}')=0\}_{l\in L_S} 
	\right)
		\bigcup \{ v(p(\vec{X}))\geq v(q(\vec{X})) \mid p,q\in C[\vec{X}], 
		\\
		& \hskip12em v(p(\vec{x}))\geq v(q(\vec{x})),
	\vec{x} \text{ a tuple in } E\backslash K\},
	\end{align*}
	where $\vec{X}'$ and $\vec{X}$ are not necessarily the same tuple. 
	Then $\Sigma$ is a set of formulas in $|E|$ variables over a set 
	of cardinality not more than $|E|$.
	
	We show that $\Sigma$ is realizable in $(K, V)$. It is enough to show that 
	$\Sigma$ is finitely realizable in $(K, V)$, because $(K,V)$ is 
	$|E|^+$-saturated. Let 
	\begin{equation}
		\label{eq-1}
		f_{l_1}, \ldots, f_{l_t}, v(q_1)\geq v(p_1), \ldots, v(p_s)\geq v(q_s)
	\end{equation}
	be finitely many elements arbitrarily chosen from $\Sigma$. 
	Then there exist a tuple $\vec{x}$ realizing all the formulas in 
	Equation \eqref{eq-1}. 
	The indexes of $\vec{x}$ corresponds to a finite subset 
	$S_0\subseteq J$. Then the absolutely irreducible affine algebraic set 
	$V_{S_0}$ is defined over $C$, hence over $K$. We may also assume that 
	$$p_1(\vec{x}), \ldots, p_s(\vec{x}), q_1(\vec{x}), \ldots, q_s(\vec{x})$$
	are all non-zero, otherwise say $p_i(\vec{x})=0$,
	then $v(p_i)\geq v(q_i)$ is equivalent to 
	an $\Lr$-formula and could be treated as one of $f_{l_1}, \ldots, f_{l_t}$.
	
	Consider the formula
	\begin{equation}
		\label{eq-2}
		\Phi:=\left[\wedge_{l\in l_{S_0}} f_l(\vec{X})=0\right]
	\bigwedge \left[\wedge_{i=1}^s v(p_i(\vec{X}))\geq v(q_i(\vec{X}))\right].
	\end{equation}
	$\vec{x}$ realizes $\Phi$ in $(L^\alg, W)$. Since $\ACVF$ is model complete
	in $\Ldiv$, there is also a realization of $\Phi$ in $(K^\alg, V)$, denoted
	by $\vec{a}$. By Fact \ref{Kollar-density}, there is a point $\vec{b}$ 
	with coordinates in $K$ such that $\vec{b}$ is sufficiently close
	to $\vec{a}$ with respect to the topology induced by $W$.  This 
	means that $\vec{b}$ also realizes $\Phi$ in $K$. Since $\vec{b}$ is 
	a point in $V_{S_0}$, it satisfies any algebraic relation satisfied by 
	the generic point $\vec{x}$ over $C$. Thus $\vec{b}$ also satisfies 
	all the formulas in Equation \eqref{eq-1}. 
	This proves that $\Sigma$ is realizable
	in $K$.
	
	Let $\{\vec{y}_i\}_{i\in J}$ be a realization of $\Sigma$ in $K$. 
	Then mapping $x_j \to y_j$, with $\mathrm{id}_K:K\to K$, we get a 
	natural map from $R$ to $K$, denoted by $\phi:R\to K$. Since $\phi$
	preserves all algebraic relations among the $x_j$, it is a ring homomorphism
	which restricts to the identity map on $K$. Meanwhile, $E$ is a field, 
	so the restriction of $\phi$ to $E$ is an isomorphism, which also 
	preserves the valuation on $E$; therefore $\phi$ is an $\Ldiv$-embedding 
	of $(E,W\cap E)$ into $(K,V)$.
\end{proof}

The following lemma is similar to its field theoretic version, as in Proposition 
11.4.1 of \cite{MR2445111}. The proof is also similar. Thus we will not present 
the full version of the proof, but refer the reader to original proof 
of said proposition. 
\begin{lemma}
	\label{lemma-downward-morphism-from-a-regular-extension-preserving-p-indep}
	Suppose that $K$ is a PAC field with a non-trivial valuation $V$, 
	$(L,W)$ is a valued field extension of $(K,V)$ where $L/K$ is regular.
	Suppose that $E$ is a subfield of $L$ contained in a $K$-subalgebra 
	$R$ of $L$, where $R$ is finitely generated over $K$. 
	Suppose furthermore that the exponent 
	of imperfection of $K$ is $d\in \omega\cup\{\infty\}$, that
	$y_1, \ldots, y_m\in R$ are $p$-independent in $L$,  $m\leq d$,
	and that $(K,V)$ is $|E|^+$-saturated in $\Ldiv$.
	Then 
	there is a ring homomorphism $\phi: R\to K$ satisfying the following 
	conditions simultaneously:
	\begin{enumerate}
		\item $\phi$ restricts to the identity map on $K$;
		\item $\phi$ restricts to an $\Ldiv$-embedding of $(E,W\cap E)$
			into $(K,V)$;
		\item $\phi(y_1), \ldots, \phi(y_m)$ are $p$-independent
			in $K$.
	\end{enumerate}
\end{lemma}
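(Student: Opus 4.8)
The plan is to mimic the strategy of Lemma~\ref{lemma-downward-morphism-from-a-regular-extension}, but now also incorporating the $p$-independence data, exactly as in the passage from Proposition~11.4.1 of \cite{MR2445111} to its refined version that keeps track of a $p$-basis. Since $R$ is finitely generated over $K$, write $R=K[x_1,\ldots,x_r]$; we may assume $y_1,\ldots,y_m$ are among the generators (or adjoin them, enlarging $r$). As in the previous lemma, a generic point of an absolutely irreducible affine $K$-variety $V_0$ is obtained, defined over a finitely generated subfield of $K$. The new ingredient is to encode ``$\phi(y_1),\ldots,\phi(y_m)$ are $p$-independent in $K$'' into the type $\Sigma$ that we want to realize in $(K,V)$.

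First I would recall how $p$-independence gets expressed by a \emph{first-order} condition relative to a fixed finite tuple. Because $m\le d$ and $K$ has exponent of imperfection $d$, $p$-independence of $m$ elements $a_1,\ldots,a_m$ in $K$ is equivalent to their $p$-monomials being $K^p$-linearly independent; over a finitely generated subfield $C$ of $K$ this can be captured by the $\Lr(C)$-formula asserting that no nontrivial $C$-linear combination of the $p^m$ monomials lies in $K^p$ — but ``lying in $K^p$'' is not quantifier-free, so instead I would use the fact (standard in this setting, cf.\ \cite{MR853851}, \cite{MR2445111}) that $p$-independence of a tuple extending a given $p$-independent set of $K$ is an elementary property, and more concretely that on the variety $V_0$ the locus where the image tuple is $p$-independent is nonempty and open; equivalently, since $y_1,\ldots,y_m$ are $p$-independent in $L$, the ``$p$-independence'' condition is already forced by a Zariski-open condition on $V_0$ cut out over $C$, which the generic point $\vec x$ satisfies. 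Concretely, I would add to the defining set $\Sigma$ of the previous lemma a finite collection of inequations (non-vanishing of certain polynomials over $C$) witnessing that the relevant ``Wronskian-type'' / $p$-linear-independence determinants are nonzero at $\vec x$.

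Then I would run the same density argument: take finitely many formulas from $\Sigma$, including the $p$-independence inequations; they are realized by the generic point $\vec x$ in $(L^{\alg},W)$, hence by model-completeness of $\ACVF$ in $(K^{\alg},V)$, and then by Fact~\ref{Kollar-density} (Koll\'ar density) a $K$-rational point $\vec b$ close enough in the $W$-topology realizes the quantifier-free part, while being a point of $V_{S_0}$ it also satisfies the algebraic relations of the generic point over $C$ and keeps the inequations (non-vanishing is an open condition). By $|E|^+$-saturation of $(K,V)$, $\Sigma$ is realized in $K$, and the resulting substitution $x_j\mapsto y_j$ gives a ring homomorphism $\phi\colon R\to K$ restricting to $\mathrm{id}_K$ and to an $\Ldiv$-embedding of $(E,W\cap E)$, exactly as before. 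The extra inequations guarantee condition~(3): $\phi(y_1),\ldots,\phi(y_m)$ are $p$-independent in $K$.

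The main obstacle I anticipate is making precise, and first-order, the statement ``$\phi(y_1),\ldots,\phi(y_m)$ are $p$-independent in $K$'': $p$-independence refers to linear independence of $p$-monomials over $K^p$, and $K^p$ is not a quantifier-free definable set, so I cannot naively put that condition into $\Sigma$. The resolution — and the step requiring care — is to use that $y_1,\ldots,y_m$ being $p$-independent in $L$ means the field $K^p(y_1,\ldots,y_m)$ has degree $p^m$ over $K^p$, a condition that pulls back to non-vanishing of explicit polynomial expressions (determinants of coordinate matrices in a suitable $p$-basis of $K$, or the relevant separating-transcendence data), so that the locus of ``bad'' specializations is a proper closed subset of $V_0$ defined over $C$; Koll\'ar density then lets us avoid it. This is precisely the point where the proof of Proposition~11.4.1 of \cite{MR2445111} does the analogous bookkeeping, and I would refer the reader there for the combinatorial details rather than reproduce them.
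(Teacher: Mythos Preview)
Your proposal has a genuine gap at the central step. You claim that ``$\phi(y_1),\ldots,\phi(y_m)$ are $p$-independent in $K$'' can be enforced by finitely many polynomial inequations over $C$, i.e.\ that the locus of $K$-points of $V_0$ with $p$-independent image is Zariski-open. This is false in general. Take $K$ of exponent of imperfection $1$, $R=K[x]$, $y_1=x$: then $\phi(y_1)$ is $p$-independent in $K$ iff $\phi(y_1)\notin K^p$, and $K^p$ is an infinite proper subfield of $K$, not a Zariski-closed subset of $\mathbb{A}^1_K$. No finite list of inequations over $C$ will cut out its complement. Your ``Wronskian-type determinants'' do not exist as polynomials in the coordinates; $p$-coordinates with respect to a $p$-basis of $K$ involve $p$-th roots, not polynomial expressions.

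The paper's proof (and Proposition~11.4.1 of \cite{MR2445111}, which you cite but mischaracterize) does \emph{not} express $p$-independence as an open condition. Instead it algebraically rigs the situation so that any $K$-specialization is forced to be $p$-independent. One first chooses $a_1,\ldots,a_n\in K$ $p$-independent over $K^p$ and adjoins to $R$ the elements $\eta_i=(a_{n-m+i}y_i)^{1/p}$, the inverses $y_i^{-1}$, and witnesses $v_{ij}$ for certain $p$-power relations; the fraction field $L'$ of this enlarged ring $S$ remains regular over $K$. One then applies Lemma~\ref{lemma-downward-morphism-from-a-regular-extension} to $S$, with no $p$-independence requirement whatsoever. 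The point is that for any $K$-algebra homomorphism $\phi\colon S\to K$ over $K$, the relations $\phi(\eta_i)^p=a_{n-m+i}\phi(y_i)$ and the specialized $v_{ij}$-identities force $a_1,\ldots,a_n\in K^p(a_1,\ldots,a_{n-m},\phi(y_1),\ldots,\phi(y_m))$, and a degree count then gives that $\phi(y_1),\ldots,\phi(y_m)$ are $p$-independent over $K^p$. Your compactness-and-density idea, adding a first-order $p$-independence clause to $\Sigma$, is in fact how the \emph{next} lemma in the paper is proved---but its finite satisfiability step appeals to the present lemma, whose content is precisely this algebraic trick.
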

\begin{proof}
	By the proof of Proposition 11.4.1 of \cite{MR2445111}, 
	we may assume that $M:=L^pK(y_1, \ldots, y_m)=L^pK(y_1, \ldots, y_k)$
	where $y_1, \ldots, y_k$ are $p$-independent over $L^pK$; there 
	exist $a_1, \ldots, a_n\in K$ with $m\leq n$ such that they are
	$p$-independent over $K^p$ and 
	$$ a_{n-m+k+1}, \ldots, a_n \in 
	L^p(a_1, \ldots, a_{n-m+k}, y_1, \ldots, y_m).$$
	Let  $\eta_i:=(a_{n-m+i}y_i)^{1/p}$, $i=1, \ldots, k$ and $L'=L(\eta_1, 
	\ldots, \eta_k)$. Then by the proof op.~cit., $L'/K$ is a regular 
	field extension.

	Therefore, there exists elements $v_{ij}\in L$ such that for 
	$i=n-m+k+1, \ldots, n$, 
	\begin{equation}
		\label{eq-4}
		a_i=\sum_{j_1, \ldots, j_{n+k}} 
	v_{ij}^p a_1^{j_1}\cdots a_{n-m+k}^{j_{n-m+k}}
	y_1^{j_{n-m+k+1}}\cdots y_m^{j_{n+k}}.
	\end{equation}
	Let 
	$$ S:=R[\eta_1, \ldots, \eta_k, y_1^{-1}, \ldots, y_m^{-1}, v_{ij}]_{ij}.$$
	Then the quotient field of $S$ is $L'$, which is regular over $K$.
	Thus by Lemma \ref{lemma-downward-morphism-from-a-regular-extension}, 
	there exists a ring homomorphism $\phi:S\to K$ such that $\phi\mid_{K}=
	\mathrm{id}_K$ and $\phi\mid_{E}$ is an $\Ldiv$-embedding of $(E, W\cap E)$
	into $(K,V)$. Then $\phi$ satisfies
	$$
		\phi(\eta_i)^p = a_{n-m+i}\phi(y_i), \quad i=1, \ldots, k.
	$$
	Thus 
	\begin{equation}
		\label{eq-5}
		a_{n-m+i}=\phi(\eta_i)^p\phi(y_i)^{-p}\phi(y_i)^{p-1},
		\quad i=1, \ldots, k.
	\end{equation}
	Apply $\phi$ to Equation \eqref{eq-4}, we get
	\begin{equation}
		\label{eq-6}
		\begin{aligned}
			&a_i=\sum_{j_1,\ldots, j_{n+k}}
			\phi(v_{ij})^p a_1^{j_1}\cdots a_{n-m+k}^{j_{n-m+k}}
			\phi(y_1)^{j_{n-m+k+1}}\cdots \phi(y_m)^{j_{n+k}}, \\
			&\hskip14em i=n-m+k+1, \ldots, n.
		\end{aligned}
	\end{equation}
	Thus by Equation \eqref{eq-5} and Equation \eqref{eq-6}, 
	we get 
	$$ a_1, \ldots, a_n\in K^p(a_1, \ldots, a_{n-m}, \phi(y_1), \ldots, 
	\phi(y_m)). $$
	But $a_1, \ldots, a_n$ are $p$-independent over $K^p$, so 
	$$ a_1, \ldots, a_{n-m}, \phi(y_1), \ldots, \phi(y_m)$$
	are also $p$-independent over $K^p$.  Thus $\phi$ satisfies all the 
	desired properties.
\end{proof}

Combining Lemma 
\ref{lemma-downward-morphism-from-a-regular-extension-preserving-p-indep}
and the proof of Lemma 
\ref{lemma-downward-morphism-from-a-regular-extension}, we can improve the later to 
preserve the $p$-independence relation.
\begin{lemma}
	\label{lemma-downward-morphism-from-a-regular-extension-preserving-p-indep-saturated}
	Suppose that $K$ is a PAC field with a non-trivial valuation $V$, 
	$(L,W)$ is a valued field extension of $(K,V)$ where $L/K$ is regular.
	Suppose that $E$ is a subfield of $L$ contained in a $K$-subalgebra 
	$R$ of $L$, where $R$ is generated by not more than $|E|$ elements over $K$. 
	Suppose furthermore that the exponent of imperfection 
	of $K$ is $d\in \omega\cup\{\infty\}$,
	$\{y_m\}_{m\in M}\subseteq R$ are $p$-independent in $L$, where 
	$|M|\leq \min\{|E|, d\}$.
	Suppose that $(K,V)$ is $|E|^+$-saturated in $\Ldiv$.
	Then 
	there is a ring homomorphism $\phi: R\to K$ satisfying the following 
	conditions simultaneously:
	\begin{enumerate}
		\item $\phi$ restricts to the identity map on $K$;
		\item $\phi$ restricts to an $\Ldiv$-embedding of $(E,W\cap E)$
			into $(K,V)$;
		\item $\{\phi(y_j)\}_{j\in J}$ are $p$-independent
			in $K$.
	\end{enumerate}
\end{lemma}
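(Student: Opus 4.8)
The plan is to replay the compactness argument in the proof of Lemma~\ref{lemma-downward-morphism-from-a-regular-extension}, but to enlarge the type it realizes so that the realization is also forced to make every finite subfamily of $\{y_m\}_{m\in M}$ be $p$-independent in $K$, and to strengthen the finite-satisfiability step by grafting onto it the $p$-th-root-tower construction from the proof of Lemma~\ref{lemma-downward-morphism-from-a-regular-extension-preserving-p-indep}. As a preliminary remark, if $|E|$ is finite then $R$ is finitely generated over $K$ and $M$ is finite, so the assertion is already a special case of Lemma~\ref{lemma-downward-morphism-from-a-regular-extension-preserving-p-indep}; so I may assume $E$ is infinite. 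Following the cited proof, I write $R=K[x_i]_{i\in J}$ with $|J|\le|E|$, and since $|M|\le|E|$ I may adjoin the $y_m$ to the generating system, so I arrange $\{y_m\}_{m\in M}\subseteq\{x_i\}_{i\in J}$.

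Next I would form, exactly as in the proof of Lemma~\ref{lemma-downward-morphism-from-a-regular-extension}, a parameter set $C\subseteq K$ of cardinality at most $|E|$ containing: for each finite $S\subseteq J$, a field of definition of the absolutely irreducible variety $V_S$ carrying the generic point $(x_i)_{i\in S}$ together with generators of its vanishing ideal $I_S$; and, harmlessly enlarging $C$, enough coefficients that every element of $E$ is a $C$-polynomial in the $x_i$. Let $\Sigma(\{X_j\}_{j\in J})$ be the set of formulas consisting of the equations coming from the $I_S$, all valuation inequalities $v(p(\vec X))\ge v(q(\vec X))$ with $p,q\in C[\vec X]$ that hold at the generators, and, for each finite $M_0\subseteq M$, the formula $\psi_{M_0}(\{X_m\}_{m\in M_0})$ asserting that no nontrivial combination $\sum_I c_I^{\,p}\prod_m X_m^{I(m)}$ vanishes. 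The latter is a genuine first-order $\Lr$-formula, with only $p^{|M_0|}$ coefficient quantifiers, and it holds at the generators in $L$ precisely because $\{y_m\}_{m\in M}$ is $p$-independent in $L$. The hypothesis $|M|\le\min\{|E|,d\}$ guarantees both that $\Sigma$ is a set of at most $|E|$ formulas in at most $|E|$ variables over a parameter set of size at most $|E|$, and that every finite subfamily of the $y_m$ has size at most $d$.

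The crux is to show that $\Sigma$ is finitely satisfiable in $(K,V)$. A finite fragment $\Sigma_0\subseteq\Sigma$ involves generators indexed by a finite $S_0\subseteq J$, which I enlarge so that it contains the finite $M_0\subseteq M$ dominating the $\psi$'s occurring in $\Sigma_0$; it consists of finitely many equations, finitely many valuation inequalities (which, as in the cited proof, may be taken to involve only nonzero values $p_i(\vec x),q_i(\vec x)$), and the single formula $\psi_{M_0}$, where $|M_0|\le d$. The $y_m$ with $m\in M_0$ remain $p$-independent in the regular extension $L_0:=K(x_i)_{i\in S_0}$ of $K$ (linear independence over $L^p$ a fortiori holds over the smaller field $L_0^p$), so I may run the reduction in the proof of Lemma~\ref{lemma-downward-morphism-from-a-regular-extension-preserving-p-indep}: choosing $a_1,\dots,a_n\in K$ that are $p$-independent over $K^p$ with the required membership property (possible because the imperfection exponent of $K$ equals $d\ge|M_0|$), and adjoining $p$-th roots $\eta_i$ of suitable products $a_j y_i$, I obtain a regular extension $L_0'\subseteq L^{\alg}$ of $K$ whose finitely generated generating ring (the $x_i$ with $i\in S_0$, the $\eta_i$, the $y_m^{-1}$, and the auxiliary elements $v_{ij}$) is the coordinate ring of an absolutely irreducible affine $K$-variety. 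The quantifier-free $\Ldiv$-formula recording the chosen equations, the relations $\eta_i^{\,p}=a_j y_i$, and the valuation inequalities is satisfied by this generic point in $(L^{\alg},W)$, hence, by model-completeness of $\ACVF$ (Fact~\ref{ACVF-QE}), in $(K^{\alg},V)$; the density theorem Fact~\ref{Kollar-density} then produces a $K$-rational point of the variety arbitrarily $V$-close to that realization. Such a point satisfies the chosen equations and inequalities, satisfies every algebraic relation the generic point satisfies over $C$, and, by the computation reproduced from the proof of Lemma~\ref{lemma-downward-morphism-from-a-regular-extension-preserving-p-indep}, has its $(y_m)_{m\in M_0}$-coordinates $p$-independent in $K$, i.e.\ it satisfies $\psi_{M_0}$. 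Hence $\Sigma_0$ is satisfiable in $(K,V)$.

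Finally, $|E|^+$-saturation of $(K,V)$ yields a realization $\{b_j\}_{j\in J}\subseteq K$ of $\Sigma$, and $x_j\mapsto b_j$ together with $\mathrm{id}_K$ defines a ring homomorphism $\phi\colon R\to K$ (it preserves all algebraic relations among the $x_i$) restricting to $\mathrm{id}_K$; since $E$ is a field $\phi|_E$ is injective, and, because $E\subseteq C[\vec x]$ and the valuation inequalities are in $\Sigma$, it is an $\Ldiv$-embedding of $(E,W\cap E)$ into $(K,V)$ exactly as in the cited proof; and the formulas $\psi_{M_0}$ force $\{\phi(y_m)\}_{m\in M}$ to be $p$-independent in $K$. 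I expect the main difficulty to be the finite-satisfiability step: correctly transplanting the $p$-th-root tower of Lemma~\ref{lemma-downward-morphism-from-a-regular-extension-preserving-p-indep} into the density argument in the proof of Lemma~\ref{lemma-downward-morphism-from-a-regular-extension} and verifying that a sufficiently $V$-close $K$-rational point of the resulting variety still witnesses $p$-independence; everything else is the bookkeeping that $|M|\le\min\{|E|,d\}$ is exactly what makes both the saturation and the choice of the $a_j$ go through, and that $p$-independence descends from $L$ to the finitely generated intermediate fields.
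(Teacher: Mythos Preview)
Your proposal is correct and follows essentially the same strategy as the paper: enlarge the type $\Sigma$ from the proof of Lemma~\ref{lemma-downward-morphism-from-a-regular-extension} by the $p$-independence conditions on finite subfamilies of $\{y_m\}_{m\in M}$, and then use the machinery behind Lemma~\ref{lemma-downward-morphism-from-a-regular-extension-preserving-p-indep} to establish finite satisfiability. The only difference is one of packaging: the paper's proof is a two-line sketch that simply adds the $p$-independence formulas to $\Sigma$ and asserts that finite realizability ``is guaranteed by Lemma~\ref{lemma-downward-morphism-from-a-regular-extension-preserving-p-indep}'', whereas you unpack that citation by reproducing the $p$-th-root tower construction inline and verifying that the density argument still goes through on the enlarged variety. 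Your version is more explicit about why the $K$-rational point witnesses $p$-independence (via the computation with the $a_j$ and $\eta_i$), while the paper's version treats Lemma~\ref{lemma-downward-morphism-from-a-regular-extension-preserving-p-indep} as a black box applied to the finitely generated data coming from a finite fragment; both routes arrive at the same place.
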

\begin{proof}
	The strategy of the proof is very much the same as the one for Lemma
	\ref{lemma-downward-morphism-from-a-regular-extension}. 
	Using the notation from that proof, assume that $y_m=g_m(\vec{x})$. 
	Instead of 
	showing the set of formulas $\Sigma$ is finitely realizable in $K$, 
	we show that 
	$$ \Sigma\cup\{\text{$g_m(\vec{X})$ are $p$-independent}\}_{m\in M}$$
	is finitely realizable in $K$, which is guaranteed by Lemma 
	\ref{lemma-downward-morphism-from-a-regular-extension-preserving-p-indep}.
\end{proof}

\section{Existence of various pseudo-algebraically closed fields}

In the manuscript \cite{CherlinvddMacintyre:ThRCfields}, 
the elementary theory of regularly closed fields were studied, where 
a field is called a regularly closed field exactly when it is 
pseudo-algebraically closed. The following fact
was proved in \cite{CherlinvddMacintyre:ThRCfields}.

\begin{fact}
	\label{existence-of-e-free-PAC-with-exponent-of-imperfection-d}
	[Proposition 38 of \cite{CherlinvddMacintyre:ThRCfields}]
	Let $d$ be any element in $\omega\cup\{\infty\}$ denoting a chosen 
	exponent of imperfection.
	Let $F$ be a field algebraic over its prime field, $G$ a projective
	profinite group and $\pi: G\to \Gal(F)$ an epimorphism. Then there 
	exists a field extension $K/F$ satisfying all the following conditions:
	\begin{enumerate}
		\item $K$ is pseudo-algebraically closed;
		\item the relative algebraically closure of the prime field 
			in $F$ is exactly $K$;
		\item there exists an isomorphism $\sigma : G\to \Gal(K)$ 
			such that the following diagram commutes;
			$$
			\xymatrix{
				G \ar[rr]^\sigma \ar[dr]_{\pi}
				& & \Gal(K) \ar[dl]^{\mathrm{res}}\\
				& \Gal(F) & \\
				}
			$$
		\item the exponent of imperfection of $K$ is $d$.
	\end{enumerate}
\end{fact}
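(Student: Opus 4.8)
The statement is a realization theorem: given $F$ algebraic over its prime field $\mathbb{F}$ and a projective $G$ presented over $\Gal(F)$ via $\pi$, one must manufacture a PAC field $K\supseteq F$ with $\Gal(K)\cong G$ over $\Gal(F)$, with $F$ relatively algebraically closed in $K$, and with prescribed exponent of imperfection $d$. It is the field-theoretic model on which the valued-field analogue of this paper is patterned, and it can be proved by the scheme used in \cite{MR2445111} for realizing projective groups as absolute Galois groups of PAC fields, supplemented with bookkeeping for the exponent of imperfection.

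\emph{The group-theoretic skeleton.} Let $e$ be the rank of $G$ (for simplicity assume $e\le\aleph_0$; the general case is similar). The plan is first to produce, over $F$, a PAC field $\Omega\supseteq F$ with $\Gal(\Omega)\cong\hat F_e$, with $F$ relatively algebraically closed in $\Omega$ (so $\mathrm{res}\colon\Gal(\Omega)\to\Gal(F)$ is onto), and with exponent of imperfection $d$; this is the substantial part, discussed below. Granting it, apply Corollary~\ref{corollary-of-Gaschutzs-lemma} to the pair of epimorphisms $\mathrm{res}\colon\hat F_e=\Gal(\Omega)\to\Gal(F)$ and $\pi\colon G\to\Gal(F)$ (legitimate since $\mathrm{rank}(G)\le e$): it yields an epimorphism $\alpha\colon\Gal(\Omega)\to G$ with $\pi\alpha=\mathrm{res}$. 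Projectivity of $G$, applied to the pair $(\mathrm{id}_G,\alpha)$, gives a section $\gamma\colon G\to\Gal(\Omega)$ of $\alpha$. Put $K:=$ the fixed field of $\gamma(G)$ in $\Omega^{\mathrm{sep}}$, so $\Gal(K)=\gamma(G)$ and $\sigma:=\gamma\colon G\xrightarrow{\sim}\Gal(K)$. Then $K/\Omega$ is separable algebraic, so $K$ is PAC by Fact~\ref{algebraic-extensions-of-PAC} and has exponent of imperfection $d$ by Fact~\ref{separable-algebraic-extension-imperfection-degree}; the restriction $\Gal(K)\to\Gal(F)$ equals $\mathrm{res}|_{\gamma(G)}=\pi\circ\alpha|_{\gamma(G)}=\pi\circ\gamma^{-1}$, hence $\mathrm{res}\circ\sigma=\pi$ and the triangle commutes; and because $\pi$ is onto, the image of $\gamma(G)$ in $\Gal(\mathbb F)$ coincides with that of $\Gal(F)$, so $K\cap\mathbb F^{\mathrm{sep}}=F$, i.e.\ $F$ is the relative algebraic closure of the prime field in $K$.

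\emph{Controlling the exponent of imperfection and choosing the base.} In characteristic $0$, $d=0$ is forced; take the Hilbertian base $F(t)$. In characteristic $p$ with $1\le d<\infty$, take $F(t_1,\dots,t_d)$, which is Hilbertian, regular over $F$, and of exponent of imperfection $d$ since $t_1,\dots,t_d$ form a $p$-basis; for $d=\infty$ adjoin countably many $t_i$. The case $p>0$, $d=0$ reduces to $d=1$: after building the field for $d=1$, pass to its perfect closure, which is algebraic (so PAC is preserved by Fact~\ref{algebraic-extensions-of-PAC}), has the same absolute Galois group compatibly with $\mathrm{res}$, sends the exponent of imperfection to $0$, and --- because $F$, being algebraic over a finite field, is perfect --- keeps $F$ relatively algebraically closed. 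Over the chosen countable Hilbertian base $F_1$, the PAC Nullstellensatz (Fact~\ref{PAC-Nullstellensatz}) together with the Free Generator Theorem (Fact~\ref{so-called-The-Free-Generator-Theorem}) --- or, for $e=\aleph_0$, the $\omega$-free density theorem (Fact~\ref{density-theorem-for-smallest-Galois-closure-PAC-and-omega-free}) --- provide the raw material: PAC fields, separable algebraic over $F_1$ (hence of exponent of imperfection $d$), with free absolute Galois groups.

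\emph{The main obstacle.} These density theorems deliver a PAC field $F_1^{\mathrm{sep}}(\pmb\tau)$ with free absolute Galois group, but its restriction to $\Gal(F)$ is only the subgroup topologically generated by $\mathrm{res}(\pmb\tau)$, which need not exhaust $\Gal(F)$ when $\Gal(F)$ is infinite; then $F$ fails to be relatively algebraically closed and the one-shot construction breaks. Repairing this is the technical heart: one builds $\Omega$ as the union of a tower $F\subseteq F_1\subseteq F_2\subseteq\cdots$ (transfinite if $e$ is large), at each stage solving a finite embedding problem over the current field and ensuring, by a density argument of the kind underlying Fact~\ref{PAC-Nullstellensatz}, that in the limit $\Omega$ is PAC and $\Gal(\Omega)\cong\hat F_e$, while keeping $\Omega/F$ regular (so $F$ stays relatively algebraically closed) and preserving the exponent of imperfection $d$ at every step. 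The group-theoretic inputs that make the tower run are the Gasch\"utz rigidity of epimorphisms onto $\Gal(F)$ (Corollary~\ref{corollary-of-Gaschutzs-lemma}), the fact that open subgroups of free profinite groups are free (Fact~\ref{open-subgroups-of-free-profinite-groups}), and the behaviour of free profinite products (Fact~\ref{selfnormalizing-component-of-free-product}). I expect essentially all of the work --- in particular the uniform control of the exponent of imperfection along the tower --- to lie here; the reduction carried out in the previous paragraphs is formal.
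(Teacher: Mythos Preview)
The paper does not prove this statement: it is recorded as a \textbf{Fact}, explicitly attributed to Proposition~38 of \cite{CherlinvddMacintyre:ThRCfields}, and quoted without argument. There is therefore no in-paper proof against which to compare your proposal.

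For what it is worth, your outline follows the standard Lubotzky--van den Dries scheme (reproduced in Chapter~23 of \cite{MR2445111}): realize a free profinite group as the absolute Galois group of a PAC field regular over $F$ with the desired imperfection degree, then use projectivity of $G$ to split an epimorphism and pass to a fixed field. Your own diagnosis is accurate --- the first two paragraphs are formal reductions, and the real content is the tower construction you only gesture at in the last paragraph. One technical caveat: Corollary~\ref{corollary-of-Gaschutzs-lemma}, as stated in this paper, is for finite $e$ only, so your reduction as written does not literally cover projective $G$ of infinite rank; the infinite-rank case requires a separate (and in fact somewhat different) argument, which the cited source handles directly rather than via an intermediate free group.
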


Since every PAC field is infinite, and on any infinite field, there is 
always a non-trivial valuation, using the proposition above, we get that 
there exists a PAC field $K$ with a non-trivial valuation satisfying 
all the conditions in said proposition.  In the rest of the article, 
we will be considering various first order theories of pseudo-algebraically 
closed fields with non-trivial valuation, Proposition 
\ref{existence-of-e-free-PAC-with-exponent-of-imperfection-d} guarantees that
these theories are all consistent. 

\section{Lack of Quantifier elimination}
\label{section-lack-of-QE}
Following the notation in \cite{MR2445111} below, 
we use $\LR$ to denote the language $\Lr\cup\{R_n\}_{n\geq 1}$, 
$\LRdiv$ to denote the language $\Ldiv\cup\{R_n\}_{n\geq 1}$,
and $\LRpdiv$ to denote the language $\Lpdiv\cup\{R_n\}_{n\geq 1}$, where $R_n$ 
is an $n$-ary predicate. 
If $T$ is a theory of fields 
in any of $\Lr$, $\Ldiv$ or $\Lpdiv$, then we use $T_R$
to denote the theory 
$T\cup\{\phi_n\}_{n\geq 1}$ in $\LR$, $\LRdiv$ or $\LRpdiv$ respectively,
where $\phi_n$ is the axiom
$$ R_n(a_0, a_1, \ldots, a_{n-1}) \leftrightarrow 
\exists x ( x^n + a_{n-1}x^{n-1} + \cdots + a_{1} x + a_0=0).$$

\begin{theorem}
	Suppose that $T$ is one of the following theories, then $T_R$
	does NOT have quantifier elimination.
	\begin{enumerate}
		\item The theory of pseudo-algebraically closed non-trivially 
			valued fields, in $\Ldiv$ or in $\Lpdiv$ (characteristic $p$);
		\item the theory of $e$-free ($e\in \omega\backslash\{0\}$ fixed)
			pseudo-algebraically closed
			non-trivially valued fields, of characteristic $p$,
			with exponent of imperfection $d$ 
			($d\in \omega\cup\{\infty\}$ fixed), in $\Lpdiv$;
		\item the theory of $\omega$-free pseudo-algebraically closed 
			non-trivially valued fields, of characteristic $p$,
			with exponent of imperfection $d$
			($d\in \omega\cup\{\infty\}$ fixed), in $\Lpdiv$.
	\end{enumerate}
\end{theorem}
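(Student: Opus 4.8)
The plan is to exhibit, for each of the three theories, two models (or a model and a substructure thereof) that are $\LR$-substructure-equivalent but not $\LR$-elementarily equivalent, and then invoke Fact~\ref{substructure-complete}: quantifier elimination is the same as substructure completeness, so a single failure of substructure completeness suffices. The source of the obstruction, exactly as in the field-theoretic case treated in \cite{CherlinvddMacintyre:ThRCfields}, is Galois-theoretic: the predicates $R_n$ record only which separable polynomials of degree $n$ over a substructure $A$ acquire a root, i.e.\ they encode the ``existential'' part of $\Gal(A)$, but they cannot detect the finer structure of the absolute Galois group of the ambient PAC field. Two PAC fields can have the same $R_n$-data over a common subfield while having non-isomorphic absolute Galois groups, and since the elementary theory of a PAC field depends genuinely on $\Gal(K)$ (via the comodel theory / Galois stratification), they will not be elementarily equivalent.

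Concretely, I would proceed as follows. First, fix a small valued subfield $(A, V_0)$ — e.g.\ take $A$ to be the relative algebraic closure of the prime field inside a suitable base, carrying a non-trivial valuation — chosen so that its ``$R_n$-diagram'' is determined: one wants $A$ to already be ``$R$-closed'' in the relevant sense, so that no proper separable algebraic extension shows up in any model extending $A$ that we consider, hence the atomic $\LR$-diagram of $A$ is the same in both models. Then, using Fact~\ref{existence-of-e-free-PAC-with-exponent-of-imperfection-d} (together with the remark following it guaranteeing a non-trivial valuation and the freedom to prescribe the exponent of imperfection $d$), build two PAC valued fields $(K_1, V_1)$ and $(K_2, V_2)$, both extending $(A, V_0)$ regularly, both of characteristic $p$, both with exponent of imperfection $d$, both with the prescribed freeness ($e$-free, resp.\ $\omega$-free, for cases (2) and (3); for case (1) one has complete freedom in the Galois groups), such that $\Gal(K_1)\not\cong \Gal(K_2)$ — for case (1) one can even take one of them separably closed (so $\Gal$ trivial) and the other not. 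Both share $A$ as an $\LR$-substructure with the same atomic diagram. Finally, one must show $T_R\cup\mathrm{Diag}_{\LR}(A)$ is not complete: exhibit an $\LR$-sentence true in one model and false in the other. This is where the genuine PAC-field theory enters: since distinct finite quotients of $\Gal(K_i)$ are detected by first-order sentences (a finite Galois extension of prescribed Galois group, together with the PAC property, is first-order expressible — this is the classical analysis behind why PAC fields lack QE in $\Lr\cup\{R_n\}$), and since one can arrange $\Im(\Gal(K_1))\neq \Im(\Gal(K_2))$, there is such a distinguishing sentence; the valuation plays no role in the distinction itself, so the $\Ldiv$- or $\Lpdiv$-expansion is harmless.

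For the bookkeeping in cases (2) and (3) one has to be a little careful that the two fields really do have the \emph{same} fixed freeness and exponent of imperfection while differing in $\Gal$ — but this is exactly what Fact~\ref{existence-of-e-free-PAC-with-exponent-of-imperfection-d} permits, since for a fixed free profinite rank $e\geq 1$ (or $\omega$) there are many non-isomorphic projective profinite groups $G$ with $G$ having a free subgroup of that rank, or more simply many with differing $\Im(G)$, and each can be realized as $\Gal(K)$ over the same algebraic base field $F$ with the same $d$; for case (1) the constraint is even weaker. I expect the main obstacle to be not the construction of the two fields but the careful choice of $A$ (and of the valuation on it and its regular extensions to $K_1, K_2$) ensuring that $A$'s atomic $\LR$-diagram — crucially including all the $R_n$-relations and, in the $\Lpdiv$ cases, the $\lambda_{n,i}$-values — genuinely coincides in $K_1$ and $K_2$, so that the \emph{only} difference between the two models is the Galois-theoretic one witnessed by the distinguishing sentence; making sure the relative $p$-coordinate functions cause no extra obstruction (they will agree because both extensions are regular over $A$, preserving $p$-bases) and that the division predicate over $A$ is identical (both $V_1, V_2$ restrict to $V_0$) requires attention but follows the template of the field-theoretic argument in \cite{CherlinvddMacintyre:ThRCfields}.
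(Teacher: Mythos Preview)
Your proposal has a genuine gap in Cases (2) and (3). You want to build two models $K_1,K_2$ of the theory with $\Im(\Gal(K_1))\neq\Im(\Gal(K_2))$ and distinguish them by a pure field-theoretic sentence, remarking that ``the valuation plays no role in the distinction itself''. But the theories in (2) and (3) pin down the absolute Galois group: ``$e$-free'' means $\Gal(K)\cong\hat{F}_e$, not merely that $\Gal(K)$ contains a free subgroup of rank $e$, so any two models of $\PACVF_{p^d}^e$ have isomorphic absolute Galois groups and identical $\Im(\Gal)$; likewise any $\omega$-free field has $\Im(\Gal)$ equal to \emph{all} finite groups. Your distinguishing mechanism therefore cannot exist in these cases. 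In fact, by the Cherlin--van den Dries--Macintyre description of elementary equivalence for PAC fields, two $e$-free PAC fields with the same exponent of imperfection and the same absolute numbers are elementarily equivalent \emph{as fields}, so no $\Lr$-sentence (hence no $\LR$-sentence over a common separably-closed-in-both substructure) can separate them.

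The paper's argument is essentially different and uses the valuation in an indispensable way. One takes a single field $E$ (which is $e$-free PAC of exponent $d$) realized as the fixed field of a self-normalizing copy of $\hat{F}_e$ inside $\Gal(K)\cong\hat{F}_{2e}$ for a suitable PAC field $K$; self-normalization forces $E/K$ to be non-Galois. A non-trivial valuation $V$ on $K$ then has $[L\cap E:K]$ extensions to a suitable finite piece $L\cap E$, more than $|\Aut(L\cap E/K)|$, so two of them, say $V_i,V_j$, are not $K$-conjugate. Lemma~\ref{distinguishing-nonconjugated-valuations} produces a primitive element with minimal polynomial $f(X)\in K[X]$ such that $f$ has a root in $V_i$ but \emph{no} root of $f$ in $E$ lies in $V_j$. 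The $\Ldiv(K)$-sentence ``$f$ has a root of non-negative value'' then separates $(E,\tilde V_i)$ from $(E,\tilde V_j)$, showing $T_R$ is not substructure complete over $(K,V)$. The two models have literally the same underlying field; it is the valuation, not the Galois group, that does the work.
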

\begin{proof}
	We only prove the cases of $(2)$ and $(3)$. Case $(1)$ 
	is implied by the proof of Case $(2)$.

	First, for Case $(2)$.

	For a fixed $e\in \omega\backslash\{0\}$
	and a fixed $d\in \omega\cup\{\infty\}$, let $T$ denote
	that theory of $e$-free pseudo-algebraically closed non-trivially 
	valued fields with degree of imperfection $d$.
	To show that $T_R$ does not have 
	quantifier elimination, by Fact \ref{substructure-complete}
	it is enough to show that $T_R$ is not substructure complete.

	Let $G=
	\hat{F}_e\coprod \hat{F}_e\cong\hat{F}_{2e}$ be the free profinite product of 
	the free profinite group of rank $e$ with itself.
	Let $H$ be the first free factor of $G$, then by Fact 
	\ref{selfnormalizing-component-of-free-product}, $H$ self-normalizing in $G$.
	By Fact 
	\ref{existence-of-e-free-PAC-with-exponent-of-imperfection-d}, 
	there exists an $2e$-free PAC field with exponent of imperfection $d$ 
	whose absolute Galois group is $G$; let 
	$K$ be such a field.  
	Let $E$ be the fixed field of $H$ in $K^\sep$. 
	Because $H$ is self-normalizing,  if $\sigma \in \mathrm{Aut}(E/K)$, 
	then $\sigma$ extends to an element $\tilde{\sigma}\in G$ satisfying
	$\tilde{\sigma} H \tilde{\sigma}^{-1} = H$, which 
	in turn implies that $\tilde{\sigma} \in H$, whence $\tilde{\sigma}$
	fixes $E$ element-wise. Thus $\mathrm{Aut}(E/K)=1$. 
	Thus by Fact \ref{algebraic-extensions-of-PAC} and 
	Fact \ref{separable-algebraic-extension-imperfection-degree}, 
	$E$ is an $e$-free
	PAC field whose exponent of imperfection is also $d$.

	Because $H$ is self-normalizing in $G$, the field extension $E/K$
	is not Galois. This means that there exists a finite separable normal
	extension $L/K$ contained in $K^\sep$ such that $L\cap E$ is not 
	normal over $K$. 
	
	Pick a non-trivial valuation $V$ on $K$, by Lemma 
	\ref{number-of-extensions-equals-degree}, 
	there are exactly $[L\cap E:K]$ distinct extensions of $V$ on $K$ to
	$L\cap E$, 
	denoted  by  $V_1, \ldots, V_n$. Because $L\cap E$ is not Galois over $K$, 
	there exists two valuations, $V_i$ and $V_j$, not $K$-conjugated in $L\cap E$.
	By Lemma \ref{distinguishing-nonconjugated-valuations}, 
	there exists a primitive element $a$ for the extension $L\cap E/K$ 
	with minimal polynomial $f(X)\in K[X]$ 
	such that $a\in V_i\backslash V_j$ and none of the roots 
	of $f(X)$ in $L\cap E$ is in $V_j$. Let $\Phi$ be an $\Ldiv(K)$ sentence
	saying that $f(x)$ has a root with a non-negative value. Then 
	$(L\cap E, V_1)\models \Phi$ but $(L\cap E, V_2)\not\models \Phi$.
	Since $L$ is normal over $K$, all the roots of $f(X)$ in $E$ are 
	already in $L\cap E$, this means that if we denote extensions of 
	$V_i$ and $V_j$ to $E$ by $\tilde{V}_i$ and $\tilde{V}_j$ respectively, 
	then 
	$$ (E, \tilde{V}_i)\models \Phi, 
	\qquad \text{but} \qquad 
	(E, \tilde{V}_j)\not\models\Phi.$$
	Therefore $(E, \tilde{V}_i)\not\equiv_{(K,V)} (E,\tilde{V}_j)$.
	Thus $T_R$ is not substructure complete.

	Now, for Case $(3)$. The proof is similar to the above, exact that 
	one replaces the group $G$ above
	by $G=\hat{F}_\omega \coprod \hat{F}_\omega $ and use Fact
	\ref{free-product-of-projective-groups}.
\end{proof}

In view of the theorem above, in order to obtain quantifier elimination, we need
to functionalize the predicates $R_n$. On obvious possible solution is to 
uniformly pick out the roots of all the polynomials with roots; but this 
does not seem easy if possible at all to do.
Since the theorem above suggests that 
one obstruction to having quantifier elimination is the existence of 
non-Galois extensions, we introduce the following function symbols to 
eliminate these non-Galois extensions. 

\begin{definition}
	Let $K$ be any field. Given a  polynomial $f(X)\in K[X]$, 
	we define the {\bf maximal splitting factor} of $f(X)$ to the 
	unique monic polynomial $g(X)$ satisfying the following two conditions:
	\begin{enumerate}
		\item $g(X)$ splits into linear factors over $K$;
		\item there exists an polynomial $h(X)\in K[X]$ without roots in $K$
			such that $f(X)=g(X)h(X)$.
	\end{enumerate}
	This is equivalent to saying that $g(X)$ is the product 
	$\prod_{i=1}^r (x-r_i)$, where $r_1, \ldots, r_i$ are all the roots of 
	$f(X)$ in $K$.
\end{definition}

\begin{definition}
	Let $K$ be any field and $n$ be an positive integer. Given $n+1$ 
	elements $a_0, \ldots, a_{n-1}, a_n\in K$, 
	denote the polynomial 
	$$a_nX^n+a_{n-1}X^{n-1} + \cdots + a_{1}X+a_0$$
	by $\pi_n(a_0, \ldots, a_{n-1}, a_n)$.  Suppose that the maximal splitting 
	factor of $\pi_n(a_0, \ldots, a_{n-1}, a_n)$ over $K$ is 
	$$g(X) = b_n X^n + b_{n-1}X^{n-1} + \cdots + b_1X+b_0,$$
	where $b_n, \ldots, b_0 \in K$, some of which could be zeros.
	For each $0\leq i\leq n$, we define the $i$-th {\bf 
	splitting coefficient 
	}
	$\theta_{n,i}(a_0, a_1, \ldots, a_n)$ by dictating that
	$$\theta_{n,i}(a_0, a_1, \ldots, a_n)=b_i.$$
\end{definition}
\begin{definition}
	We use $\Ltheta$ to denote the language $\Lr\cup\{\theta_{n,i}\}_{n\geq 1, 
	0\leq i\leq n}$, $\Lthetadiv$ to denote the language 
	$\Ldiv\cup\{\theta_{n,i}\}_{n\geq 1, 0\leq i\leq n}$, 
	and $\Lthetapdiv$ to denote the language 
	$\Lpdiv\cup\{\theta_{n,i}\}_{n\geq 1, 0\leq i\leq n}$. 
	A field naturally interprets the symbols $\theta_{n,i}$ as 
	splitting coefficients.
\end{definition}

It will be proved in Section \ref{Section-quantifier-elimination} that 
several theories of pseudo-algebraically closed fields with non-trivial valuation 
do in fact have quantifier elimination in $\Lthetapdiv$. But it is worth pointing 
out that 
in $\Lthetapdiv$, 
the theory of pseudo-algebraically closed fields with non-trivial valuations
itself as a total, does not admit quantifier elimination.

\begin{theorem}
	The following statements are true.
	\begin{enumerate}
		\item Let $\PACVF$ be the theory, 
			in $\Lthetadiv$, of pseudo-algebraically
			closed non-trivially valued fields.  
			Then $\PACVF$ does NOT admit quantifier elimination. 
		\item Let $\PACVF_{p,d}$ 
			be the theory, in $\Lthetapdiv$, of pseudo-algebraically
			closed non-trivially valued 
			fields, of characteristic $p$, with exponent of 
			imperfection 
			$d$ ($d\in \omega\cup\{\infty\}$). 
			Then $\PACVF_{p,d}$ does NOT 
			admit quantifier elimination either. 
	\end{enumerate}
\end{theorem}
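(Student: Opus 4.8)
The plan is to show, in both cases, that the theory in question is not substructure complete and then invoke Fact~\ref{substructure-complete}. I will work in positive characteristic: fix a prime $p$, and in part~(2) fix the prescribed exponent of imperfection $d$ (for part~(1) the exponent of imperfection is immaterial, so take it to be $0$). I will produce two models of the relevant theory that share, as a common substructure, the prime field $\mathbb{F}_p$ carrying its trivial valuation together with the interpretations of the symbols $\theta_{n,i}$ (and of the $\lambda_{n,i}$ in part~(2)) induced from the two models, and that are nonetheless not elementarily equivalent, being separated by an $\Lr$-sentence that records the number of separable quadratic extensions.

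For the construction I apply Fact~\ref{existence-of-e-free-PAC-with-exponent-of-imperfection-d} with $F=\mathbb{F}_p$, which is algebraic over its prime field and has $\Gal(\mathbb{F}_p)\cong\hat{F}_1\cong\hat{\mathbb{Z}}$. Taking $G=\hat{F}_1$ (projective, being free) with $\pi\colon\hat{F}_1\to\Gal(\mathbb{F}_p)$ an isomorphism yields a PAC field $K_1\supseteq\mathbb{F}_p$ with $\Gal(K_1)\cong\hat{F}_1$, of exponent of imperfection $d$, in which $\mathbb{F}_p$ is relatively algebraically closed; taking instead $G=\hat{F}_2$ together with any epimorphism $\hat{F}_2\twoheadrightarrow\hat{\mathbb{Z}}$ yields a PAC field $K_2\supseteq\mathbb{F}_p$ with $\Gal(K_2)\cong\hat{F}_2$, of exponent of imperfection $d$, in which $\mathbb{F}_p$ is relatively algebraically closed. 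Since PAC fields are infinite, each $K_j$ carries a non-trivial valuation $V_j$, so $(K_1,V_1)$ and $(K_2,V_2)$ are models of $\PACVF$ (resp.\ of $\PACVF_{p,d}$). Now $\hat{F}_1\cong\hat{\mathbb{Z}}$ has a unique open subgroup of index $2$ while $\hat{F}_2$ has exactly three, so by the Galois correspondence $K_1$ has exactly one, and $K_2$ exactly three, separable quadratic extensions; since for each $n$ the property of having exactly $n$ separable quadratic extensions is first-order expressible in $\Lr$, we get $(K_1,V_1)\not\equiv(K_2,V_2)$, a fortiori not over any common substructure.

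It then remains to verify that $\mathbb{F}_p$, with the induced structure, really is a common $\Lthetadiv$- (resp.\ $\Lthetapdiv$-) substructure of $(K_1,V_1)$ and $(K_2,V_2)$. The division predicate restricts correctly because the only valuation on $\mathbb{F}_p$ is trivial. The delicate point---and the step I expect to be the main obstacle, since it is precisely where the relative algebraic closedness provided by Fact~\ref{existence-of-e-free-PAC-with-exponent-of-imperfection-d} is used---is that the adjoined function symbols neither enlarge the substructure nor are computed differently in the two models: because $\mathbb{F}_p$ is relatively algebraically closed in each $K_j$, every root in $K_j$ of a polynomial over $\mathbb{F}_p$ already lies in $\mathbb{F}_p$, so the maximal splitting factor over $K_j$ of such a polynomial coincides with its maximal splitting factor over $\mathbb{F}_p$, whence each $\theta_{n,i}$ restricts to an $\mathbb{F}_p$-valued function taking the same values in $K_1$ and $K_2$; for part~(2) one adds that $\mathbb{F}_p$ is perfect, so $\mathbb{F}_p\subseteq K_j^p$, no nonempty tuple from $\mathbb{F}_p$ is $p$-independent in $K_j$, and each $\lambda_{n,i}$ likewise restricts to an $\mathbb{F}_p$-valued function independent of $j$. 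With $\mathbb{F}_p$ exhibited as a common substructure over which the two models disagree, $\PACVF\cup\mathrm{Diag}(\mathbb{F}_p)$ (resp.\ $\PACVF_{p,d}\cup\mathrm{Diag}(\mathbb{F}_p)$) is not complete, and Fact~\ref{substructure-complete} yields the failure of quantifier elimination.
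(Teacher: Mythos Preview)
Your approach is essentially the same as the paper's: show the theory is not substructure complete by exhibiting two models with non-isomorphic absolute Galois groups (one $1$-free, one $2$-free) over a common $\Lthetapdiv$-substructure, then invoke Fact~\ref{substructure-complete}. The paper takes as common substructure a separably closed field $K$ of exponent of imperfection $d$ (so that closure under the $\theta_{n,i}$ and $\lambda_{n,i}$ is automatic, since every separable polynomial already splits in $K$), and builds the two models via the PAC Nullstellensatz and the Free Generator Theorem over the Hilbertian field $K(t)$; you instead take the prime field $\mathbb{F}_p$ and invoke Fact~\ref{existence-of-e-free-PAC-with-exponent-of-imperfection-d} directly. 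Both routes work; yours is a bit more elementary in that it avoids the Hilbertian machinery, at the cost of the short verification (which you carry out correctly) that relative algebraic closedness of $\mathbb{F}_p$ in each $K_j$ forces the $\theta_{n,i}$ to agree, and perfectness of $\mathbb{F}_p$ forces the $\lambda_{n,i}$ to agree.

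There is one gap: as written, your argument only treats positive characteristic, since you take $F=\mathbb{F}_p$ with $p$ a prime and use $\Gal(\mathbb{F}_p)\cong\hat{\mathbb{Z}}$. For part~(2) with $p=0$ (hence $d=0$), $\Gal(\mathbb{Q})$ is not a quotient of $\hat{F}_1$ or $\hat{F}_2$, so your construction does not go through. The fix is immediate: replace $\mathbb{F}_p$ by the algebraic closure of the prime field (so $\overline{\mathbb{F}_p}$ in positive characteristic, $\overline{\mathbb{Q}}$ in characteristic zero), which has trivial absolute Galois group, and apply Fact~\ref{existence-of-e-free-PAC-with-exponent-of-imperfection-d} with the trivial epimorphisms $\hat{F}_1\to 1$ and $\hat{F}_2\to 1$. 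The verification that the $\theta_{n,i}$ (and, when relevant, the $\lambda_{n,i}$) agree on this algebraically closed common substructure is then even simpler, and this brings your argument in line with the paper's choice of a separably closed common substructure.
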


\begin{proof}
	It is enough to show that $\PACVF_{p,d}$ is not substructure complete. 
	The obstruction occurs already on the field theoretic level, not necessary
	to involve the valuations. 

	The $K$ be a countable separably closed field of characteristic $p$ with 
	exponent of imperfection $d$. Let $t$ be transcendental element over $K$. Then
	$K(t)$ is a countable Hilbertian field and the extension $K(t)/K$ is regular.
	Let $V$ be any non-trivial valuation on $K(t)$
	By Fact \ref{PAC-Nullstellensatz}  and
	Fact \ref{so-called-The-Free-Generator-Theorem}, 
	there exists a field $E_1/K(t)$ which is $1$-free PAC 
	and there exists a field $E_2/K(t)$ which is $2$-free PAC. 
	Let $V_1$ and $V_2$ be any two extensions of $V$ to $E_1$ and $E_2$
	respectively. 
	Since $E_1/K$ and $E_2/K$ are both separable and $K$ is separably closed,
	the $\Lthetapdiv$-substructure on $K$ in $E_1$ and the 
	$\Lthetapdiv$-substructure on $K$ in $E_2$ agree. 
	Thus 
	$$ (E_1, V_1) \not\equiv_{(K,V)} (E_2,V_2),$$
	because they have different free absolute Galois groups. 
\end{proof}

In view of the theorem above, in order to obtain quantifier elimination, we 
need to impose conditions on the Galois groups of the models. This leads us 
to obtain the results in Section \ref{Section-quantifier-elimination}.

\section{The Embedding Lemma}

The so-called Embedding Lemma (Lemma 20.2.2 in \cite{MR2445111}) plays 
an important role in the analysis of the model theory of PAC fields. Analogously, 
we also have a valuation theoretic version of the Embedding Lemma, which plays
a similar important role in the model theoretic analysis of the pseudo-algebraically
closed valued fields.

\begin{lemma}
	[Valuation Theoretic Embedding Lemma]
	\label{valuation-theoretic-embedding-lemma}
	Suppose that valued field $(E, V_E)$ extends $(L,V_L)$  and
	that valued field $(F,V_F)$ extends 
	$(M,V_M)$, where $E/L$ and $F/M$ are regular field extensions.
	Suppose that $F$ is PAC and 
	$|E|^+$-saturated in $\Ldiv$ and that the exponent of imperfection 
	of $E$ is not
	more than the exponent of imperfection of $F$. 
	Suppose that there is a field isomorphism $\Phi_0:L^\sep \to M^\sep$ 
	and a commutative diagram
	\begin{equation}
		\xymatrix{
			\Gal(E) \ar[d]_{\mathrm{res}} & \Gal(F) 
			\ar[l]_{\varphi} \ar[d]^{\mathrm{res}}
			 \\
			\Gal(L) & \Gal(M)\ar[l]^{\varphi_0},
			}
	\end{equation}
	where $\varphi_0$ is the isomorphism induced by $\Phi_0$ (namely 
	$\varphi_0(\sigma)= \Phi_0^{-1}\circ \sigma \circ \Phi_0$), 
	$\varphi$ is a homomorphism, and $\Phi_0$ restricts to an $\Ldiv$-isomorphism
	from $(L, V_L)$ onto $(M, V_M)$. 

	Then there exists an extension of $\Phi_0$ 
	to a field embedding $\Phi:E^\sep\to F^\sep$ that induces $\varphi$ with 
	$F/\Phi(E)$ separable and that $\Phi$ restricts to an $\Ldiv$-embedding 
	from $(E,V_E)$ into $(F,V_F)$. If furthermore $\varphi$ is surjective, 
	then $F/\Phi(E)$ is regular.
\end{lemma}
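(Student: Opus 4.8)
The plan is to imitate the classical Embedding Lemma (Lemma 20.2.2 of \cite{MR2445111}), upgrading each step so that it also tracks the valuations, using the density theorem (Fact \ref{Kollar-density}) and the quantifier elimination for $\ACVF$ (Fact \ref{ACVF-QE}) at the points where the field-theoretic argument invokes the PAC property. The overall structure is a transfinite back-and-forth/induction along a well-ordering of $E^\sep$ that builds an increasing chain of partial $\Ldiv$-embeddings, each compatible with $\varphi$ on the appropriate Galois groups.

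First I would reduce to the case where $E = E^\sep \cap (\text{something finitely generated over } L)$, i.e.\ handle one ``step'' of the induction at a time: given the partial embedding defined so far on a subfield $E_0$ with $L \subseteq E_0 \subseteq E$, together with its Galois-theoretic data, extend it to $E_0(x)$ for one more element $x \in E$. There are two kinds of steps. In the \emph{transcendental step}, $x$ is transcendental over $E_0$; here one uses $|E|^+$-saturation of $(F, V_F)$ to choose an image $x'$ in $F$ transcendental over the current image, realizing the right cut in the value group — this is exactly the kind of maneuver carried out in the proof of Lemma \ref{algebraically-independent-amlgmation}, and it keeps the extension regular. In the \emph{separable-algebraic step}, one must extend the isomorphism $\Phi_0$ on separable closures compatibly with $\varphi$; here I would lean on Lemma \ref{extending-substructure-to-relative-separable-closure} and Fact \ref{conjugate-theorem-normal-extensions} to transport the induced valuation correctly, and on $\varphi$ (a continuous homomorphism of absolute Galois groups) to pick out which conjugate to map to.

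The crucial step — and the reason the statement is nontrivial — is the \emph{descent of a finitely generated regular extension into $F$}. Concretely: after the transcendental coordinates have been placed, one has a finitely generated regular subextension of $E/L$ whose generic point must be specialized to an $F$-rational point in a way that (a) is an $\Ldiv$-embedding on the valued subfield generated, (b) preserves $p$-independence up to the allowed exponent of imperfection, and (c) is compatible with $\varphi$ on the relevant finite Galois data. Parts (a) and (b) are precisely what Lemma \ref{lemma-downward-morphism-from-a-regular-extension-preserving-p-indep-saturated} delivers, once one has arranged the field of definition and the relevant $p$-independent elements to lie in a finitely generated $K$-subalgebra; part (c) is handled by first adjoining to the picture a splitting field of the relevant polynomials (so that the Galois action becomes visible on a finite level) and then applying Fact \ref{Kollar-density} to an absolutely irreducible variety carrying the extra Galois structure, as in the classical proof. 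I expect assembling (c) together with the valuation constraint — ensuring the point found by Fact \ref{Kollar-density} is simultaneously $V_F$-close enough to respect all the finitely many imposed inequalities \emph{and} lies on the right component to induce $\varphi$ — to be the main obstacle; the saturation hypothesis on $(F,V_F)$ and the exponent-of-imperfection hypothesis are exactly what make it go through.

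Finally, once the chain of partial embeddings is complete, taking the union gives $\Phi: E^\sep \to F^\sep$ with $\Phi|_E$ an $\Ldiv$-embedding inducing $\varphi$; that $F/\Phi(E)$ is separable follows from the $p$-independence preservation, and if $\varphi$ is surjective then $\Phi(E)$ is relatively separably algebraically closed in $F$, which together with separability yields regularity of $F/\Phi(E)$ by the standard criterion recalled in the preliminaries.
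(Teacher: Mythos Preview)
Your plan diverges from the paper's proof in its overall architecture, and the divergence matters.

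The paper does \emph{not} run a transfinite induction along a well-ordering of $E^\sep$. It follows the classical Embedding Lemma (Lemma 20.2.2 of \cite{MR2445111}) almost verbatim, in one shot:
\begin{enumerate}
\item Using Lemma \ref{algebraically-independent-amlgmation}, place $E$ and $F$ algebraically independently over $L$ inside a common algebraically closed valued field, so that $E^\sep$ and $F^\sep$ are linearly disjoint over $L^\sep$.
\item For each $\sigma\in\Gal(F)$, glue $\varphi(\sigma)$ on $E^\sep$ with $\sigma$ on $F^\sep$ to obtain $\tilde\sigma\in\Gal(E^\sep F^\sep/EF)$, and let $D$ be the fixed field of $\{\tilde\sigma:\sigma\in\Gal(F)\}$. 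Then $D\cap F^\sep=F$ and $E^\sep\subseteq F^\sep[D]$. This single construction encodes the $\varphi$-compatibility into the field $D$ once and for all.
\item Pick $D_0\subseteq D$ of size $|E|$ with $E^\sep\subseteq F^\sep[D_0]$ and apply Lemma \ref{lemma-downward-morphism-from-a-regular-extension-preserving-p-indep-saturated} \emph{once} to get an $F$-algebra map $\Psi:F[D_0]\to F$ that is an $\Ldiv$-embedding on $E$ and sends a $p$-basis of $E$ to a $p$-independent set in $F$.
\item Linear disjointness of $D$ from $F^\sep$ over $F$ extends $\Psi$ to $\tilde\Psi:F^\sep[D_0]\to F^\sep$; set $\Phi:=\tilde\Psi|_{E^\sep}$. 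The identity $\tilde\Psi(\tilde\sigma x)=\sigma\tilde\Psi(x)$ immediately gives that $\Phi$ induces $\varphi$, and surjectivity of $\varphi$ yields regularity of $F/\Phi(E)$.
\end{enumerate}
So your items (a), (b), and (c) are not three separate obstacles to be reconciled step by step; the fixed-field-$D$ trick makes (c) automatic, after which a single call to Lemma \ref{lemma-downward-morphism-from-a-regular-extension-preserving-p-indep-saturated} handles (a) and (b) simultaneously.

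Your inductive scheme, by contrast, has a coherence problem you have not addressed. Once a partial embedding $E_0\hookrightarrow F$ has been fixed and a transcendental $x$ has been sent to some $x'\in F$, the images of elements separably algebraic over $E_0(x)$ are forced up to $\Gal(F)$-conjugacy; there is no residual freedom to ``pick which conjugate to map to'' in order to match $\varphi$ at later stages. Arranging that each finite specialization made along the way is compatible with the full profinite datum $\varphi$ is exactly the difficulty that the $D$-construction dissolves --- and note that the classical proof you cite already uses this construction rather than a stepwise induction. Your sketch would need either a compactness argument over the tree of finite Galois constraints or, more simply, to adopt the $D$-trick; as written it does not close.
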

\begin{proof}
	The proof is similar to that of Lemma 20.2.2 in \cite{MR2445111}.

	We may identify $L$ and $M$, and assume that both of 
	$\Phi_0$ and $\varphi_0$ are 
	the identity maps. By Lemma \ref{algebraically-independent-amlgmation}, 
	we may assume that $E$ is algebraically independent from $F$ over $L$.
	Then by the proof op.~cit., $E^\sep/L^\sep$ and $E^\sep/L^\sep$ are both 
	regular, and $EF$ is regular both over $E$ and over $F$. 
	Thus $E^\sep$ is linearly disjoint from $F^\sep$ over $L^\sep$. Meanwhile, 
	each $\sigma \in \Gal(F)$ extends uniquely to a field 
	automorphism $\tilde{\sigma}\in 
	\Gal(E^\sep F^\sep/EF)$ with 
	$$ \tilde{\sigma}x = 
	\begin{cases}
		\varphi(\sigma) x, & x\in E^\sep, \\
		\sigma x, & x\in F^\sep.
	\end{cases}
	$$
	Thus the map $\sigma \mapsto \tilde{\sigma}$ embeds $\Gal(F)$ into 
	$\Gal(E^\sep F^\sep/EF)$. Let $D$ be the fixed field of the image of 
	$\Gal(F)$ under the aforementioned map.  Then 
	$D\cap F^\sep=F$, $DF^\sep = E^\sep F^\sep$, and 
	$E^\sep\subseteq E^\sep F^\sep
	= D[F^\sep] =F^\sep[D]$. For each $x\in E^\sep$, choose $y_i \in F^\sep$
	and $d_i\in D$ where $i$ ranges over a finite set $I_x$ such that 
	$x= \sum_{i\in I_x} y_id_i$. Let $D_0=E\cup\{d_i\mid x\in E^\sep, i\in I_x\}$.
	Then $|D_0| = |E|$ and $E^\sep \subseteq F^\sep[D_0]$.

	If $S$ is a $p$-basis of $E$, then since $F(D_0)/E$ is separable, $S$
	remains $p$-independent over $F(D_0)^p$. Therefore by Lemma
	\ref{lemma-downward-morphism-from-a-regular-extension-preserving-p-indep-saturated}
	there exists a ring homomorphism $\Psi: F[D_0] \to F$ satisfying the 
	following three conditions:
	\begin{enumerate}
		\item $\Psi$ restricts to the identity map on $F$;
		\item $\Psi$ restricts to an $\Ldiv$-embedding of $(E, V_E)$
			into $(F,V_F)$;
		\item $\Psi(S)$ is $p$-independent in $F$.
	\end{enumerate}
	From $(3)$, we know that $F/\Psi(E)$ is separable. Since $D$ is linearly 
	disjoint from $F^\sep$ over $F$, $\Psi$ extends to a ring homomorphism
	$\tilde{\Psi}:F^\sep[D_0] \to F^\sep$ such that $\tilde{\Psi}$ 
	restricts to the identity map on $F^\sep$. Furthermore, for each $\sigma\in 
	\Gal(F)$, and for each $x\in F^\sep\cup D_0$,
	we have 
	$$ \tilde{\Psi}(\tilde{\sigma}x) = \sigma \tilde{\Psi}(x),$$
	which then holds for each $x\in F^\sep[D_0]\supseteq E^\sep$. 
	Thus if we let $\Phi = \tilde{\Psi}\mid_{E^\sep}$, then $\Phi$ satisfies 
	all the desired properties.
	
	If $\varphi$ is surjective, then we need to show that $\Phi(E)$ is 
	relatively algebraically closed in $F$ so that $F/\Phi(E)$ is regular.
	For any $y\in F$ algebraic over $\Phi(E)$,  there exists a unique
	$x\in E^\sep$ such that $\Phi(x)=y$. So for any $\sigma \in \Gal(F)$, we 
	have 
	$ \tilde{\Psi}(\tilde{\sigma} x) = \sigma \tilde{\Psi}(x)=\sigma y = y.$ 
	But $\tilde{\sigma} x = \varphi(\sigma)x \in E^\sep$, 
	so $\varphi(\sigma) x = \tilde{\sigma} x = x$. Since $\varphi$ is surjective, 
	this means that $\{\varphi(\sigma)\}_{\sigma \in \Gal(F)} = \Gal(E)$, 
	thus $x\in E$. So $y\in \Phi(E)$. In conclusion, $F/\Phi(E)$ is indeed regular
	if $\varphi$ is surjective.
\end{proof}

\section{Quantifier elimination}
\label{Section-quantifier-elimination}
\begin{definition}
	[Definition 24.1.2 of \cite{MR2445111}]
	A profinite group $G$ has the {\bf Embedding Property} if each 
	embedding problem $(\zeta: G\to A, \alpha : B \to A)$ where 
	$\zeta$ and $\alpha$ are epimorphisms and $B\in \Im(G)$ is 
	solvable, that is, exists an epimorphism $\gamma : G\to B$ 
	such that the following digram commutes
	$$
	\xymatrix{
		& G \ar[dl]_{\gamma} \ar[d]^{\zeta} \\
		B \ar[r]_{\alpha} & A
		}
	$$
\end{definition}
\begin{definition}
	[Definition 24.1.3 of \cite{MR2445111}]
	A field $K$ is called a {\bf Frobenius field} if $K$ is PAC 
	and $\Gal(K)$ has the Embedding Property.
\end{definition}
\begin{remark}
	In \cite{CherlinvddMacintyre:ThRCfields}, the Frobenius fields are 
	called Iwasawa regularly closed fields, because the Embedding Property
	was first pointed out by Iwasawa. These fields are called Frobenius 
	fields by various authors because these fields admit an analog of 
	the Frobenius Density Theorem in number theory. In this article, we adopt 
	the Frobenius-field terminology, which seems to be more widely adopted 
	by other authors.
\end{remark}

In \cite{CherlinvddMacintyre:ThRCfields}, the theory of Frobenius fields was 
studied and a quantifier elimination result was proved. In order to state their 
result, we introduced their so-called {\it Galois formalism}.
\begin{definition}
	[see Page 60 on \cite{CherlinvddMacintyre:ThRCfields}]
	The {\bf Galois formalism} is the language of rings $\Lr$ expanded 
	by the predicates $R_n$ mentioned in Section \ref{section-lack-of-QE} 
	and a $0$-place predicate $I_G$, for each isomorphism type $G$ of
	finite groups.
\end{definition}
Note that over a field,  all the new predicates in the Galois formalism are 
in fact equivalent to $\Lr$-sentences.
\begin{fact}
	[Section 2.10 of \cite{CherlinvddMacintyre:ThRCfields}]
	Being Frobenius is a first order property in the language of rings $\Lr$.
\end{fact}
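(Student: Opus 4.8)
The claim decomposes exactly along its definition: $K$ is Frobenius precisely when $K$ is PAC and $\Gal(K)$ has the Embedding Property, so it suffices to produce, over the theory of fields in $\Lr$, an axiom scheme for each of these two conditions. For the PAC part the plan is simply to invoke the classical fact (Chapter~11 of \cite{MR2445111}) that PAC is an elementary class in $\Lr$: it is axiomatized by a scheme $\{\chi_{n,d}\}_{n,d\geq 1}$ where $\chi_{n,d}$ says that every absolutely irreducible affine variety cut out by polynomials of degree $\leq d$ in $\leq n$ variables has a $K$-rational point. The one non-formal ingredient there — that absolute irreducibility of a variety with bounded defining data is a first-order condition on its coefficients (via resultants / a Bertini-type argument) — I would cite rather than reprove.

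The heart of the matter is the Embedding Property, and the plan is to show that the data of a finite embedding problem for $\Gal(K)$ can be encoded uniformly at the level of field elements. First I would fix, for each finite group $A$, an $\Lr$-formula $\psi_A(\bar c)$ expressing that $\bar c$ is a tuple coding a separable irreducible $f\in K[X]$ of degree $|A|$ whose splitting field already equals $K[X]/(f)$, together with combinatorial data identifying $\Gal(K[X]/(f)\,/\,K)$ with $A$. This is first-order because $L:=K[X]/(f)$ and the roots of $f$ in $L$ are uniformly interpretable in $K$, and an element of $\Gal(L/K)$ is pinned down by the root to which it sends the canonical primitive element. Every epimorphism $\zeta\colon\Gal(K)\twoheadrightarrow A$ arises from some realizing $\bar c$ (take $L$ the fixed field of $\ker\zeta$, finite Galois of degree $|A|$), and conversely. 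In particular ``$B\in\Im(\Gal(K))$'' is the single $\Lr$-sentence $\exists\bar c\,\psi_B(\bar c)$ — this is the $I_B$-predicate of the Galois formalism, and together with the obvious $\Lr$-definability of the $R_n$, it shows the Galois formalism adds nothing to $\Lr$ over fields.

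Finally, for each isomorphism type of an embedding-problem datum $\mathcal{E}=(A,B,\alpha\colon B\twoheadrightarrow A)$ with $\alpha$ surjective — of which there are only countably many — I would write the $\Lr$-sentence
$$
\phi_{\mathcal{E}}:\qquad \exists\bar c\,\psi_B(\bar c)\ \longrightarrow\ \forall\bar c\,\bigl(\psi_A(\bar c)\rightarrow \exists\bar c'\,(\psi_B(\bar c')\wedge\rho_\alpha(\bar c,\bar c'))\bigr),
$$
where $\rho_\alpha(\bar c,\bar c')$ asserts that the extension $L'$ coded by $\bar c'$ contains the extension $L$ coded by $\bar c$ and that the restriction map $\Gal(L'/K)\to\Gal(L/K)$, read through the two chosen identifications, equals $\alpha$; since this restriction map is computed from how the roots coded in $\bar c$ sit among those coded in $\bar c'$, the clause $\rho_\alpha$ is first-order. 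One then checks that $K$ is Frobenius if and only if $K$ is a field satisfying $\{\chi_{n,d}\}\cup\{\phi_{\mathcal{E}}\}$: PAC handles $\{\chi_{n,d}\}$, and ``$\gamma$ an epimorphism $\Gal(K)\twoheadrightarrow B$ solving $\mathcal{E}$'' corresponds exactly to a realization of $\psi_B(\bar c')\wedge\rho_\alpha(\bar c,\bar c')$.

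The only real work — and hence the main obstacle — is setting up the coefficient-level encoding $\psi_A,\rho_\alpha$ of ``finite Galois extension with a marked isomorphism to a fixed group, with restriction maps tracked'' and verifying it behaves correctly; this is precisely the bookkeeping underlying the Galois formalism and the Galois-stratification machinery of \cite{MR2445111,CherlinvddMacintyre:ThRCfields}. Once that encoding is in hand, the axiom scheme and the stated equivalence follow immediately.
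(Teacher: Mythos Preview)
The paper does not actually prove this statement: it is recorded as a \emph{Fact} with a bare citation to Section~2.10 of \cite{CherlinvddMacintyre:ThRCfields}, and no argument is given in the paper itself. So there is no in-paper proof to compare your proposal against.

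That said, your outline is a correct and essentially the standard route: axiomatize PAC by the well-known scheme, and axiomatize the Embedding Property by encoding each finite embedding-problem datum $(A,B,\alpha)$ via coefficient-tuples of primitive elements of finite Galois extensions with marked group identifications. This is exactly the bookkeeping behind the Galois formalism in \cite{CherlinvddMacintyre:ThRCfields} and the Galois-stratification machinery in \cite{MR2445111}, and indeed your observation that $I_B$ is the sentence $\exists\bar c\,\psi_B(\bar c)$ matches the paper's remark just before the Fact that over a field the new predicates of the Galois formalism are equivalent to $\Lr$-sentences. The one point to be careful about in a full write-up is the uniformity of $\rho_\alpha$: you need to express ``$L\subseteq L'$ and the restriction map is $\alpha$'' purely in terms of the coefficient tuples $\bar c,\bar c'$, which requires interpreting the roots of one minimal polynomial inside the splitting field of the other --- this is routine once degrees are bounded, but it is where the actual work lives, as you correctly flag.
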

\begin{fact}
	[Theorem 41 of \cite{CherlinvddMacintyre:ThRCfields}]
	\label{Perfect-FrobF-admits-QE}
	Let $\mathrm{FrobF}_{p^0}$ be the theory of perfect Frobenius fields  in the 
	Galois formalism, where the predicates $R_n$ are interpreted as in Section
	\ref{section-lack-of-QE} and the predicates $I_G$ are true in a 
	model $K$ if and only if there exists a field extension $L/K$ such that 
	$\Gal(L/K)\cong G$. Then $\mathrm{FrobF}_{p^0}$ admits quantifier elimination.
\end{fact}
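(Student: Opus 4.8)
The plan is to establish quantifier elimination through the embedding criterion of Fact~\ref{use-embedding-to-get-QE} (the Galois formalism contains the constants $0,1$): one takes $\M,\N\models\mathrm{FrobF}_{p^0}$ with $\N$ being $|M|^+$-saturated, a substructure $A\subseteq M$, and a partial $\Lang$-embedding $f\colon A\to N$, where $\Lang$ denotes the Galois formalism, and one must extend $f$ to an $\Lang$-embedding $\M\to\N$. (Equivalently one verifies substructure-completeness via Fact~\ref{substructure-complete}.) The preliminary steps are routine: replace $A$ by its fraction field and then by its perfect hull inside $M$ (legitimate since $M,N$ are perfect), extending $f$ each time; these extensions remain partial embeddings because preservation of the predicates $R_n$ reduces, after clearing denominators and passing to $p$-power roots of coefficients, to the $R_n$-data already known over $A$. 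After these reductions $A$ is a perfect subfield of $M$ and $f$ is a field embedding preserving all predicates; in particular, since the $I_G$ are $0$-place predicates preserved by $f$, one gets $\Im(\Gal(M))=\Im(\Gal(N))$.

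Next I would match the relative algebraic closures. Let $L$ be the relative algebraic closure of $A$ in $M$ and $L''$ that of $f(A)$ in $N$; these are perfect, $L^\sep=A^\sep$ and $(L'')^\sep=f(A)^\sep$, and $M/L$ and $N/L''$ are regular. Under the Galois correspondence over $A$ (identifying $A$ with $f(A)$ via $f$), $L$ and $L''$ correspond to the closed subgroups $\res_M(\Gal(M))$ and $\res_N(\Gal(N))$ of $\Gal(A)$. The preserved $R_n$-data says precisely that the same irreducible separable polynomials over $A$ acquire a root in $L$ as in $L''$, that is, that these two subgroups have the same subconjugacy behaviour against every open subgroup of $\Gal(A)$; a short profinite argument --- pass to finite quotients, where ``same subconjugacy data'' forces conjugacy by an order count, then take the inverse limit by compactness --- then shows the two subgroups are conjugate in $\Gal(A)$. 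Hence there is an $A$-isomorphism $L\to L''$, which I extend to a field embedding $\Phi_0\colon A^\sep\to N^\sep$ with $\Phi_0|_A=f$, $\Phi_0(A^\sep)=f(A)^\sep$, and $\Phi_0(L)=L''$.

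The crux is then to produce a \emph{surjective} homomorphism $\varphi\colon\Gal(N)\to\Gal(M)$ fitting into the commutative square of restriction maps demanded by the Embedding Lemma, i.e.\ with $\res_M\circ\varphi=\varphi_0\circ\res_N$, where $\varphi_0\colon\Gal(L'')\to\Gal(L)$ is the isomorphism induced by $\Phi_0$. Because $M/L$ and $N/L''$ are regular, $\res_M\colon\Gal(M)\twoheadrightarrow\Gal(L)$ and $\res_N\colon\Gal(N)\twoheadrightarrow\Gal(L'')$ are epimorphisms, so finding $\varphi$ amounts to solving the embedding problem posed by the two epimorphisms $\res_M$ and $\varphi_0\circ\res_N$ onto $\Gal(L)$ by an \emph{epi}morphism. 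This is possible precisely because $\Gal(N)$ has the Embedding Property --- this is the one place where ``$N$ is Frobenius'' is indispensable --- together with $\Im(\Gal(M))=\Im(\Gal(N))$, which lets one bootstrap the finite Embedding Property to the profinite target $\Gal(M)$ by a standard limit argument (cf.\ Chapter~24 of \cite{MR2445111}). Mere projectivity of $\Gal(N)$, available for any PAC field, would only give a possibly non-surjective $\varphi$, and it is precisely this failure of surjectivity that obstructs quantifier elimination for PAC fields in general, consistently with the non-elimination theorems established earlier in this paper. I expect this embedding-problem step to be the main difficulty.

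Finally I would apply the field-theoretic Embedding Lemma --- the valuation-free analogue of Lemma~\ref{valuation-theoretic-embedding-lemma} --- to the regular extensions $M/L$ and $N/L''$, the isomorphism $\Phi_0$, and the commutative square with the surjective $\varphi$: since $N$ is PAC and $|M|^+$-saturated and both exponents of imperfection vanish, it produces a field embedding $\Phi\colon M^\sep\to N^\sep$ extending $f$ with $N/\Phi(M)$ regular. Then $\Phi|_M\colon M\to N$ extends $f$ and is an $\Lang$-embedding: it is a field isomorphism onto $\Phi(M)$, which is relatively algebraically closed in $N$, so a monic polynomial over $\Phi(M)$ has a root in $N$ iff it has one in $\Phi(M)$ and the $R_n$ are preserved; and $\Gal(\Phi(M))\cong\Gal(M)$ yields $\Im(\Gal(\Phi(M)))=\Im(\Gal(M))$, so the $I_G$ are preserved as well. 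This finishes the extension of $f$, and hence the verification of quantifier elimination for $\mathrm{FrobF}_{p^0}$.
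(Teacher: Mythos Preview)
The paper does not give its own proof of this statement: it is stated as a Fact citing Theorem~41 of \cite{CherlinvddMacintyre:ThRCfields}, and the paper later appeals to ``the proof of Fact~\ref{Perfect-FrobF-admits-QE}'' only to import the one ingredient you correctly single out as the crux, namely the existence of the epimorphism $\varphi\colon\Gal(N)\twoheadrightarrow\Gal(M)$ over $\Gal(L)$ coming from the Embedding Property together with $\Im(\Gal(M))=\Im(\Gal(N))$, inside the proof of Theorem~\ref{FrobVF-pd-QE}. Your outline is the standard argument from the cited reference and is correct.

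One point of comparison with the paper's own adaptation in Theorem~\ref{FrobVF-pd-QE} is worth noting. There the language contains the splitting-coefficient functions $\theta_{n,i}$ rather than the predicates $R_n$, so any polynomial over the substructure with a root in the model already \emph{splits} over the substructure, and the passage to the relative separable closure goes through the stronger Lemma~\ref{extending-substructure-to-relative-separable-closure}. In the Galois formalism you only have the weaker ``has a root'' information from the $R_n$, and your profinite conjugacy argument --- which is essentially Lemma~20.6.3 of \cite{MR2445111}, the result the paper alludes to just before Lemma~\ref{extending-substructure-to-relative-separable-closure} --- is the right substitute. Both routes reach the same intermediate conclusion (the relative algebraic closures of $A$ in the two models are $A$-isomorphic), and from there the arguments reconverge on the Embedding Lemma. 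The limit argument you defer to Chapter~24 of \cite{MR2445111} is indeed where the real work lies, and some care with the saturation of $N$ (equivalently, of the inverse system of finite quotients of $\Gal(N)$) is needed to thread the finite solutions into a single profinite epimorphism; but you have located the difficulty precisely.
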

\subsection{$\FrobVF_{p^d}$}
\begin{theorem}
	\label{FrobVF-pd-QE}
	For a exponent of imperfection $d\in \omega\cup\{\infty\}$, 
	let $\FrobVF_{p^d}$ be the theory of Frobenius non-trivially valued fields 
	characteristic $p$ with exponent of imperfection
	$d$ in $\Lthetapdiv\cup\{I_G\}_{\text{$G$ finite group}}$. 
	Then $\FrobVF_{p^d}$ has quantifier elimination.
\end{theorem}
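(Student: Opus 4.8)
The plan is to use the characterization of quantifier elimination via partial embeddings (Fact \ref{use-embedding-to-get-QE}), so the goal becomes: given $(\M, V_\M) \models \FrobVF_{p^d}$, a substructure $\mathcal{A} \subseteq \M$ in the language $\Lthetapdiv \cup \{I_G\}$, and $(\N, V_\N) \models \FrobVF_{p^d}$ which is $|M|^+$-saturated, any partial $\Lthetapdiv\cup\{I_G\}$-embedding $f: \mathcal{A} \to \N$ extends to an $\Lthetapdiv\cup\{I_G\}$-embedding of $\M$ into $\N$. First I would analyze what the substructure $\mathcal{A}$ is: since $\mathcal{A}$ is closed under the ring operations, under the relative $p$-coordinate functions $\lambda_{n,i}$, and under the splitting-coefficient functions $\theta_{n,i}$, its fraction field $E := \Frac(\mathcal{A})$ is such that $E$ is relatively separably closed in... well, more carefully, closure under $\theta_{n,i}$ forces that every separable polynomial over $E$ with a root in $\M$ already splits in $E$; closure under $\lambda_{n,i}$ together with the valuation predicate means $E$ carries the induced valuation and the $p$-coordinate structure is respected. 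So $E^\sep \cap \M = $ the relative separable closure of $E$ in $\M$ has no proper finite subextension realized by $\mathcal{A}$'s field of fractions beyond what is forced, and by Lemma \ref{extending-substructure-to-relative-separable-closure} the map $f$ extends to an $\Ldiv$-isomorphism between the relative separable algebraic closures of $\Frac(\mathcal{A})$ in $\M$ and of $f(\mathcal{A})$'s fraction field in $\N$. The $I_G$ predicates guarantee that $f$ respects which finite groups occur as Galois groups over the base, which is what is needed to match up the absolute Galois group data.

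Next I would set up the data for the Valuation Theoretic Embedding Lemma (Lemma \ref{valuation-theoretic-embedding-lemma}). Let $L$ be the relative separable closure of $\Frac(\mathcal{A})$ in $\M$, with induced valuation $V_L$, so $\M/L$ is regular (separable because $L$ contains a $p$-basis issue is handled by the $\lambda_{n,i}$'s — one must check $L$ and $\M$ have the same exponent of imperfection $d$, using Fact \ref{separable-algebraic-extension-imperfection-degree} since $L/\Frac(\mathcal{A})$ is separable algebraic and $\M/L$ regular hence separable); similarly let $M := f(\mathcal{A})$'s relative separable closure in $\N$. The isomorphism $\Phi_0 : L^\sep \to M^\sep$ is obtained by extending the $\Ldiv$-isomorphism $L \cong M$ (from the previous paragraph) to separable closures — and it induces $\varphi_0$ on absolute Galois groups. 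Now the crucial point: I need a continuous homomorphism $\varphi : \Gal(\N) \to \Gal(\M)$ making the square with the restriction maps commute, and I want $\varphi$ surjective so that the resulting embedding $\Phi(\M) \subseteq \N$ is regular. This is exactly where the Embedding Property of $\Gal(\M)$ and $\Gal(\N)$ (Frobenius hypothesis) enters, together with the fact that $\Im(\Gal(\M)) = \Im(\Gal(\N))$ — which is encoded by the $I_G$ predicates being preserved by $f$ (they tell us the same finite groups appear as Galois groups, hence the same finite quotients of the absolute Galois groups occur). One solves the embedding problem $(\mathrm{res}: \Gal(\N) \to \Gal(M), \varphi_0 \circ (\text{iso } \Gal(L)\cong\Gal(M))... )$ — more precisely, using that both $\Gal(\M)$ and $\Gal(\N)$ have the Embedding Property and the same image set, one builds $\varphi$ by an inverse-limit/back-and-forth construction over finite quotients, solving finite embedding problems at each stage (this is the analog of the construction in Proposition 24.x of \cite{MR2445111} for Frobenius fields).

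Then I apply Lemma \ref{valuation-theoretic-embedding-lemma} with $(E, V_E) = (\M, V_\M)$ over $(L, V_L)$ and $(F, V_F) = (\N, V_\N)$ over $(M, V_M)$: the hypotheses are met because $\N$ is PAC and $|M|^+ = |E|^+$-saturated in $\Ldiv$ (saturation in the larger language gives saturation in $\Ldiv$), the exponent of imperfection of $\M$ equals $d$ which equals that of $\N$, the regularity of $\M/L$ and $\N/M$ holds, and $\varphi$ is surjective. The conclusion gives a field embedding $\Phi : \M^\sep \to \N^\sep$ extending $\Phi_0$, inducing $\varphi$, restricting to an $\Ldiv$-embedding $(\M, V_\M) \hookrightarrow (\N, V_\N)$, with $\N/\Phi(\M)$ regular. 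It remains to check $\Phi$ is an embedding for the full language $\Lthetapdiv \cup \{I_G\}$: the $\Ldiv$-part is done; the $\lambda_{n,i}$ functions are preserved because $\Phi$ is a field embedding preserving the $p$-independence structure (a field homomorphism automatically commutes with $\lambda_{n,i}$ once one knows $p$-independent tuples map to $p$-independent tuples — here $\N/\Phi(\M)$ separable gives this, matching the exponent of imperfection); the $\theta_{n,i}$ functions are preserved because $\N/\Phi(\M)$ regular means $\Phi(\M)$ is relatively algebraically closed in $\N$, so a polynomial over $\Phi(\M)$ has exactly the same roots in $\Phi(\M)$ as in... no — rather, one argues that the maximal splitting factor is computed the same way: since $\Phi$ is a field embedding, if $g$ is the maximal splitting factor of $f$ over $\M$ then $\Phi(g)$ splits over $\Phi(\M)$ and divides $\Phi(f)$, and by relative algebraic closedness of $\Phi(\M)$ in $\N$ no additional linear factors of $\Phi(f)$ over $\N$ lie over $\Phi(\M)$, but we only need preservation relative to $\M$, which is immediate; finally the $I_G$ predicates are preserved because $\varphi : \Gal(\N) \to \Gal(\M)$ is surjective, so $\Gal(L'/\Phi(\M)) \cong G$ for some $L'$ iff the same holds over $\M$. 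The main obstacle, and the heart of the proof, is the construction of the surjective $\varphi$: matching the absolute Galois groups via a back-and-forth solving finite embedding problems, where one must verify at every stage that the required finite quotient of $\Gal(\M)$ is realized as a finite quotient of $\Gal(\N)$ — this uses the preservation of the $I_G$ predicates by $f$ to conclude $\Im(\Gal(\M)) = \Im(\Gal(\N))$, and the Embedding Property of both groups to actually solve the lifting problems.
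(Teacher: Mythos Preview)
Your proposal is correct and follows essentially the same route as the paper: reduce via Fact~\ref{use-embedding-to-get-QE}, analyze the substructure, extend to the relative separable closure via Lemma~\ref{extending-substructure-to-relative-separable-closure}, then apply the Valuation Theoretic Embedding Lemma with an epimorphism $\varphi$ built from the Embedding Property and the equality $\Im(\Gal(\M))=\Im(\Gal(\N))$ forced by the $I_G$ predicates. Two minor points where the paper is slightly cleaner: (i) rather than passing to $\Frac(\mathcal{A})$, the paper observes directly that $\mathcal{A}$ is already a field, since $\theta_{1,0}(-1,a)=-a^{-1}\in\mathcal{A}$ for $a\neq 0$; (ii) for the existence of the surjective $\varphi$ the paper simply invokes the proof of Fact~\ref{Perfect-FrobF-admits-QE} in \cite{CherlinvddMacintyre:ThRCfields}, which is exactly the inverse-limit/back-and-forth over finite embedding problems you sketch.
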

\begin{proof}
	This proof uses Fact \ref{use-embedding-to-get-QE}. 
	Suppose that $(E,V_E)$ and $(F, V_F)$ are two models of $\FrobVF_{p^d}$ 
	and $(A,V_A)$ an $\Lthetapdiv\cup\{I_G\}_G$-substructure of $(E,V_E)$ that 
	$\Lthetapdiv\cup\{I_G\}_G$-embeds into $(F,V_F)$ by $\varepsilon: A\to F$.
	We need to show that there is 
	an $\Lthetapdiv\cup\{I_G\}_G$-embedding
	of $(E,V_E)$ into $(F,V_F)$ that extends 
	$\varepsilon$ whenever $(F,V_F)$ is $|E|^+$-saturated.

	Since $(A, V_A)$ is a substructure in the 
	language $\Lthetapdiv\cup\{I_G\}_{\text{$G$ finite group}}$,
	$A$ preserves all the $0$-place predicates $I_G$, 
	thus $\Im(\Gal(E))=\Im(\Gal(F))$.

	First, notice that $A$ is a field, for the following reason. For each 
	$a\in A^\times$, 
	the maximal splitting factor of $ax-1$ is $x-a^{-1}$. Thus, 
	since $\theta_{1,0}(-1, a)=-a^{-1}\in A$, $a^{-1}\in A$. So $A$ is a field.

	Second, $E/A$ is separable, for the following reason. 
	As an $\Lp$-substructure, $A$ is closed under all the relative $p$-coordinate
	functions. If $x\in E^p$, then $\lambda_{0,0}(x)=x^{1/p}$, so if 
	furthermore $x\in A$, then $x^{1/p}\in A$. This means $A^p=A\cap E^p$. 
	Suppose that $b_1, \ldots, b_n$ are $p$-independent in $A$, then because 
	$A^p= A\cap E^p$, none of the $b_1, \ldots, b_n$ is in $E^p$. Thus there 
	exists a maximal $p$-independent subset of $\{b_1, \ldots, b_n\}$ over $E^p$, 
	which we may assume to be $\{b_1, \ldots, b_k\}$. If it 
	were true that $k< n$, Then 
	$b_{k+1}\in E^p(b_1, \ldots, b_k)$ gives a quantifier-free $\Lp$-formula 
	true in $E$ which is then also true in $A$ because $A$ is closed under 
	all the relative $p$-coordinate functions, which in turn implies that 
	$b_1, \ldots, b_n$ are $p$-dependent in $A$. Thus $k=n$ and $b_1, \ldots, 
	b_n$ remains $p$-independent in $E$. So $E/A$ is separable.  By the same 
	reasoning, $F/\varepsilon(A)$ is also separable.
	
	Third, $\varepsilon$ could be extended to an $\Lthetapdiv$-embedding of
	$(A^\alg, V_E\cap A^\alg)$. For any monic
	separable irreducible polynomial $f(X)
	\in A[X]$, if $f(X)$ has a root in $E$, then because $A$ is closed under
	the function of splitting coefficients, the maximal splitting factor of 
	$f(X)$ in $E$ is a polynomial in $A[X]$; but $f(X)$ irreducible over $A$, 
	so $f(X)$ is its own maximal splitting factor. Thus $f(X)$ splits in $E$. 
	Since $\varepsilon(A)$ is also an $\Ltheta$-substructure of $F$, 
	$f(X)$ also splits in $F$. By the same reasoning, any monic separable 
	irreducible polynomial $g(X)\in \varepsilon(A)[X]$ that has a root in $F$
	also splits both in $E$ and $F$. Therefore, by Lemma 
	\ref{extending-substructure-to-relative-separable-closure}, there 
	exists an $\Ldiv$-isomorphism 
	$$\tilde{\varepsilon}: (E\cap A^\sep, V_E\cap E\cap A^\sep) \to 
	(F\cap A^\sep, V_F\cap F\cap A^\sep),$$
	extending $\varepsilon$. Since both $E\cap A^\sep$ and $F\cap A^\sep$ 
	are relatively algebraically closed in $E$ and $F$ respectively and they
	are isomorphic as fields, their respective $\Ltheta$-substructures are 
	preserved by $\tilde{\varepsilon}$. Since $E/(E\cap A^\sep)$ and 
	$F/(F\cap A^\sep)$ remains separable, $\tilde{\varepsilon}$ is also 
	an $\Lp$-isomorphism. This means that $\tilde{\varepsilon}$ is in fact 
	an $\Lthetapdiv$-isomorphism.

	Let $(L,V_L)$ be the $\Lthetapdiv\cup\{I_G\}_G$-substructure $(E\cap A^\sep, 
	V_E\cap E\cap A^\sep)$, then $\tilde{\varepsilon}$ gives an 
	$\Lthetapdiv$-embedding of $(L,V_L)$ into $(F,V_F)$. Let 
	$M=\tilde{\varepsilon}(L)$. Then $E/L$ and $F/M$ are both regular field 
	extensions. Extend $\tilde{\varepsilon}$ to a 
	field isomorphism $\Phi_0: L^\sep \to M^\sep$. Then $\Phi_0$ induces an
	isomorphism of the absolute Galois groups $\varphi_0: \Gal(M) \to \Gal(L)$.
	By the proof of Fact \ref{Perfect-FrobF-admits-QE} in
	\cite{CherlinvddMacintyre:ThRCfields},
	 there exists an epimorphism $\varphi: \Gal(F) 
	\to \Gal(E)$ such that the following diagram commutes:
	$$
	\xymatrix{
			\Gal(E) \ar[d]_{\mathrm{res}} & \Gal(F) 
			\ar[l]_{\varphi} \ar[d]^{\mathrm{res}}
			 \\
			\Gal(L) & \Gal(M)\ar[l]^{\varphi_0},
		}
	$$

	By the Valuation Theoretic Embedding Lemma 
	\ref{valuation-theoretic-embedding-lemma}, 
	there exists an extension of $\Phi_0$ 
	to a field embedding $\Phi:E^\sep\to F^\sep$ that induces $\varphi$ with 
	$F/\Phi(E)$ regular and that $\Phi$ restricts to an $\Ldiv$-embedding 
	from $(E,V_E)$ into $(F,V_F)$.  Since $F/\Phi(E)$ is regular, 
	$\Phi$ is in fact an $\Lthetapdiv$-embedding of $(E,V_E)$ into $(F,V_E)$ 
	extending $\varepsilon$, which is then also an 
	$\Lthetapdiv\cup\{I_G\}_G$-embedding.

	These shows that $\FrobVF_{p^d}$ has quantifier elimination.
\end{proof}
\subsection{$\FrobVF_{p^d}^G$, $\PACVF_{p^d}^e$, $\PACVF_{p^d}^\omega$, 
$\SCVF_{p^d}$ and $\ACVF_p$}

From 
Theorem \ref{FrobVF-pd-QE}, we immediately 
obtain the following corollary.

\begin{corollary}
	\label{FrobVF-pd-G-QE}
	Let $G$ be a
	fixed projective profinite group with the Embedding Property, 
	$d\in \omega\cup\{\infty\}$ a fixed exponent of imperfection.
	Let $\FrobVF_{p^d}^G$
	be the class of Frobenius non-trivially valued
	fields with absolute Galois group having the same finite quotients
	as $G$, exponent of imperfection $d$, axiomatized in the 
	language $\Lthetapdiv$.  
	Then $\FrobVF_{p^d}^G$ has quantifier elimination.
\end{corollary}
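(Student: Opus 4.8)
The plan is to deduce Corollary~\ref{FrobVF-pd-G-QE} directly from Theorem~\ref{FrobVF-pd-QE} by observing that, once the finite-quotient structure of the absolute Galois group is pinned down, the extra $0$-place predicates $I_G$ of the language $\Lthetapdiv\cup\{I_G\}_G$ become redundant. First I would note that a projective profinite group $G$ with the Embedding Property is exactly the kind of group that can occur as $\Gal(K)$ for a Frobenius field $K$; indeed, by Fact~\ref{existence-of-e-free-PAC-with-exponent-of-imperfection-d}, applied with $F$ the prime field (or a suitable algebraic extension of it) and with the projective group $G$ together with the canonical epimorphism $G\to\Gal(F)$, one obtains a PAC field $K$ of exponent of imperfection $d$ with $\Gal(K)\cong G$; since $G$ has the Embedding Property, $K$ is a Frobenius field, and since PAC fields are infinite they carry non-trivial valuations by Fact~\ref{extension-of-valuation-exists} applied over the prime field (or directly). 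Hence $\FrobVF_{p^d}^G$ is consistent, and every model of it is a model of $\FrobVF_{p^d}$ whose absolute Galois group $H$ satisfies $\Im(H)=\Im(G)$.

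The key observation is that for a field $K$ the interpretation of each predicate $I_G$ (``there is an extension $L/K$ with $\Gal(L/K)\cong G$'') is determined by $\Im(\Gal(K))$: a finite group $G$ occurs as a Galois group of a finite extension of $K$ precisely when $G\in\Im(\Gal(K))$ (every finite Galois extension corresponds to an open normal subgroup of $\Gal(K)$, and conversely). Therefore in any model of $\FrobVF_{p^d}^G$ the truth value of each $I_G$ is fixed once and for all — it is $I_G$ is true iff $G\in\Im(G)$ — so the theory $\FrobVF_{p^d}^G$ in $\Lthetapdiv$ and the theory $\FrobVF_{p^d}^G$ expanded to $\Lthetapdiv\cup\{I_G\}_G$ (with the $I_G$ given those fixed truth values) have the same models and the same $\Lthetapdiv$-definable sets. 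In particular, for two models $(E,V_E),(F,V_F)\models\FrobVF_{p^d}^G$ we automatically have $\Im(\Gal(E))=\Im(G)=\Im(\Gal(F))$.

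Next I would run the $\Lthetapdiv$-version of the embedding criterion, Fact~\ref{use-embedding-to-get-QE}. Given $(E,V_E),(F,V_F)\models\FrobVF_{p^d}^G$ with $(F,V_F)$ being $|E|^+$-saturated, and an $\Lthetapdiv$-substructure $(A,V_A)\subseteq(E,V_E)$ with an $\Lthetapdiv$-embedding $\varepsilon\colon A\to F$, I would simply regard $(A,V_A)$ as an $\Lthetapdiv\cup\{I_G\}_G$-substructure of $(E,V_E)$: this is automatic because, as just noted, all the $I_G$ have fixed truth values that agree across $E$, $A$ and $F$, so $\varepsilon$ is in fact an $\Lthetapdiv\cup\{I_G\}_G$-embedding and $\Im(\Gal(E))=\Im(\Gal(F))$ holds. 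Now the entire argument in the proof of Theorem~\ref{FrobVF-pd-QE} goes through verbatim: $A$ is a field (using $\theta_{1,0}$), $E/A$ and $F/\varepsilon(A)$ are separable (using the $\lambda_{n,i}$), $\varepsilon$ extends to an $\Lthetapdiv$-isomorphism $\tilde\varepsilon$ between relative separable closures (using the splitting coefficients and Lemma~\ref{extending-substructure-to-relative-separable-closure}), and then — because both $\Gal(E)$ and $\Gal(F)$ are projective with the Embedding Property and $\Im(\Gal(E))=\Im(\Gal(F))$, which is precisely the hypothesis under which the group-theoretic input to Fact~\ref{Perfect-FrobF-admits-QE} produces the required epimorphism $\varphi\colon\Gal(F)\to\Gal(E)$ — the Valuation Theoretic Embedding Lemma~\ref{valuation-theoretic-embedding-lemma} yields an $\Lthetapdiv$-embedding of $(E,V_E)$ into $(F,V_F)$ extending $\varepsilon$.

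The only point requiring a sentence of care — and the closest thing to an obstacle — is the group-theoretic step: one must check that the solvability of the relevant embedding problem (constructing $\varphi\colon\Gal(F)\to\Gal(E)$ over the fixed isomorphism $\varphi_0\colon\Gal(M)\to\Gal(L)$ of the Galois groups of the common substructure) depends only on $\Gal(E)$ and $\Gal(F)$ both being projective, both having the Embedding Property, and having equal families of finite quotients. This is exactly what is used in the proof of Theorem~\ref{FrobVF-pd-QE} via Fact~\ref{Perfect-FrobF-admits-QE}, and the hypothesis $\Im(\Gal(E))=\Im(\Gal(G))=\Im(\Gal(F))$ built into $\FrobVF_{p^d}^G$ supplies precisely the missing ingredient that was previously guaranteed by the $I_G$-predicates. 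So once this is spelled out, the corollary follows with essentially no new work.
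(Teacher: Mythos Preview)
Your proposal is correct and follows the same idea as the paper: since each $0$-place predicate $I_H$ is determined by $\Im(G)$ and hence constant across all models of $\FrobVF_{p^d}^G$, quantifier elimination in $\Lthetapdiv\cup\{I_H\}_H$ (Theorem~\ref{FrobVF-pd-QE}) immediately yields quantifier elimination in $\Lthetapdiv$. The paper's proof is precisely this two-line observation; your re-running of the embedding argument from the proof of Theorem~\ref{FrobVF-pd-QE} is correct but unnecessary, since the theorem itself already delivers the conclusion once the $I_H$ are seen to be equivalent to $0=0$ or $0\neq 0$.
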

\begin{proof}
	By Theorem \ref{FrobVF-pd-QE}, $\FrobVF_{p^d}^G$ has quantifier 
	elimination in the language 
	$\Lthetapdiv\cup\{I_H\}_{\text{$H$ finite group}}$. However, each 
	$0$-place predicate $I_H$ is either true in every model of $\FrobVF_{p^d}^G$,
	whence equivalent to $0=0$, 
	or false in every model of $\FrobVF_{p^d}^G$, whence equivalent to $0\neq 0$.
	Thus the conclusion follows.
\end{proof}

\begin{corollary}
	Let $e$ be a positive integer, $d$ an exponent of 
	imperfection in $\omega\cup\{\infty\}$. Then the 
	theory of $e$-free pseudo-algebraically closed non-trivially valued fields
	with exponent of imperfection $d$, axiomatized in $\Lthetapdiv$, denoted
	by $\PACVF_{p^d}^e$
	admits quantifier elimination.
\end{corollary}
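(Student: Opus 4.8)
The plan is to deduce this corollary directly from Corollary~\ref{FrobVF-pd-G-QE} by taking $G=\hat F_e$ and checking that the resulting theory $\FrobVF_{p^d}^{\hat F_e}$ coincides with $\PACVF_{p^d}^e$. So the first step is to verify that $\hat F_e$ is an admissible choice of $G$: the free profinite group $\hat F_e$ of finite rank $e$ is projective (this follows from Corollary~\ref{corollary-of-Gaschutzs-lemma}, or see \cite{MR2445111}) and it has the Embedding Property (see \cite{MR2445111}). Hence, by Corollary~\ref{FrobVF-pd-G-QE}, the theory $\FrobVF_{p^d}^{\hat F_e}$ of Frobenius non-trivially valued fields of characteristic $p$ with exponent of imperfection $d$ whose absolute Galois group has the same finite quotients as $\hat F_e$, axiomatized in $\Lthetapdiv$, admits quantifier elimination.

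Next I would show that $\PACVF_{p^d}^e$ and $\FrobVF_{p^d}^{\hat F_e}$ are literally the same theory. If $(K,V)\models\PACVF_{p^d}^e$, then $\Gal(K)\cong\hat F_e$; since $\hat F_e$ has the Embedding Property, $K$ is a Frobenius field, and $\Gal(K)$ trivially has the same finite quotients as $\hat F_e$, so $(K,V)\models\FrobVF_{p^d}^{\hat F_e}$. Conversely, if $(K,V)\models\FrobVF_{p^d}^{\hat F_e}$, then $K$ is PAC, so $\Gal(K)$ is projective (Ax; see \cite{MR2445111}), and by hypothesis $\Gal(K)$ has the Embedding Property and the same finite quotients as $\hat F_e$; by the uniqueness of projective profinite groups with the Embedding Property in terms of their finite quotients (see \cite{MR2445111}), $\Gal(K)\cong\hat F_e$, i.e.\ $K$ is $e$-free, so $(K,V)\models\PACVF_{p^d}^e$. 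Therefore the two theories have the same models, and quantifier elimination for $\FrobVF_{p^d}^{\hat F_e}$ is exactly quantifier elimination for $\PACVF_{p^d}^e$.

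The only delicate point is this identification of the two classes, and specifically the implication ``Frobenius, projective, same finite quotients as $\hat F_e$'' $\Rightarrow$ ``$\Gal(K)\cong\hat F_e$'', which rests on the uniqueness theorem for projective groups with the Embedding Property; everything else is bookkeeping of definitions together with the projectivity of absolute Galois groups of PAC fields. Once that identification is in place, the corollary is immediate from Corollary~\ref{FrobVF-pd-G-QE}, and as a byproduct one sees that $\PACVF_{p^d}^e$ is indeed first-order axiomatizable in $\Lthetapdiv$, matching the statement.
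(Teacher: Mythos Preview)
Your proposal is correct and follows essentially the same route as the paper: verify that $\hat F_e$ is an admissible choice of $G$ in Corollary~\ref{FrobVF-pd-G-QE}, identify $\PACVF_{p^d}^e$ with $\FrobVF_{p^d}^{\hat F_e}$, and conclude. The only cosmetic difference is in how the identification is justified: the paper simply invokes the fact that a profinite group is isomorphic to $\hat F_e$ if and only if it has the same finite quotients as $\hat F_e$, whereas you argue via projectivity of $\Gal(K)$ for PAC $K$ together with the uniqueness theorem for projective groups with the Embedding Property and prescribed finite quotients; these are two phrasings of the same underlying result, and your version is arguably the more careful one in this context.
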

\begin{proof}
	By Corollary \ref{corollary-of-Gaschutzs-lemma}, $\hat{F}_e$ has the 
	Embedding Property. It is known that a profinite finite is 
	isomorphic to $\hat{F}_e$ if and only if it has the same finite quotients as 
	$\hat{F}_e$. Thus $\PACVF_{p^d}^e$ is $\FrobVF_{p^d}^{\hat{F}_e}$, 
	which by Corollary \ref{FrobVF-pd-G-QE} admits quantifier elimination 
	in $\Lthetapdiv$.
\end{proof}

\begin{definition}
	A field $K$ is {\bf $\omega$-free} if every finite embedding problem 
	for $\Gal(K)$ is solvable, that is, given any two epimorphisms of 
	profinite groups
	$\zeta: \Gal(K)\to A$ and $\alpha: B\to A$, where $B$ is a finite group,
	there always exists an epimorphism $\gamma: \Gal(K)\to B$ such that 
	the following diagram commutes.
	$$
	\xymatrix{
		& \Gal(K) \ar[dl]_{\gamma} \ar[d]^{\zeta} \\
		B \ar[r]_{\alpha} & A
		}
	$$
\end{definition}
\begin{fact}
	[Section 3 of \cite{MR0435051}]
	Being $\omega$-free is a first order property in the language of rings $\Lr$.
\end{fact}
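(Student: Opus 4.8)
The plan is to reduce $\omega$-freeness of $\Gal(K)$ to a countable conjunction of $\Lr$-sentences, one for each isomorphism type of finite embedding problem, relying on the standard fact that the lattice of finite Galois subextensions of $K^\sep/K$, together with the restriction maps among their Galois groups, is uniformly first-order definable over $K$ in $\Lr$ --- the same machinery that makes the predicates $I_G$ used in this article equivalent to $\Lr$-sentences.

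First I would reformulate the definition purely field-theoretically. Up to equivalence of embedding problems, a pair of epimorphisms $\zeta:\Gal(K)\to A$, $\alpha:B\to A$ with $B$ finite carries the same information as: a finite Galois extension $L/K$, a normal subgroup $N_0\lhd B$, and an isomorphism $\Gal(L/K)\cong B/N_0$; a proper solution $\gamma:\Gal(K)\twoheadrightarrow B$ lifting $\zeta$ corresponds to a finite Galois extension $M\supseteq L$ together with an isomorphism $\Gal(M/K)\cong B$ carrying $\Gal(M/L)$ onto $N_0$ and inducing the given isomorphism $\Gal(L/K)\cong B/N_0$. Quantifying over \emph{all} such $(B,N_0)$ absorbs the $\Aut(A)$-ambiguity hidden in ``$\Gal(L/K)\cong A$'' and in the choice of $\zeta$, so: $K$ is $\omega$-free iff for every finite group $B$, every $N_0\lhd B$, and every Galois $L/K$ with $\Gal(L/K)\cong B/N_0$, there exists a Galois $M/K$ with $L\subseteq M$ and an isomorphism $\Gal(M/K)\cong B$ sending $\Gal(M/L)$ to $N_0$ and restricting to the given isomorphism on $\Gal(L/K)$.

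Second --- and this is the technical heart --- I would turn each clause of this reformulation into a single $\Lr$-sentence $\chi_{B,N_0}$. A Galois extension $L/K$ with $\Gal(L/K)\cong A:=B/N_0$ is, via a primitive element, of the form $K[X]/(f_0)$ for a monic separable irreducible $f_0\in K[X]$ of degree $|A|$ that splits over $K[X]/(f_0)$ and whose Galois action on the roots realizes $A$; ``separable'', ``irreducible of degree $|A|$'', ``splits over $K[X]/(f_0)$'', and ``Galois group $\cong A$'' are each expressible by an $\Lr$-formula in the coefficients of $f_0$, the last via the factorization types of the resolvent polynomials, which is the classical way the permutation-group type of a polynomial over $K$ is detected inside $K$. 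The sought $M$ is likewise $K[X]/(g)$ for a monic separable irreducible $g$ of degree $|B|$ with Galois group $\cong B$; the condition $L\subseteq M$ becomes ``$f_0$ has a root in $K[X]/(g)$'', i.e. the existence of $q\in K[X]$ with $g(X)\mid f_0(q(X))$, a polynomial identity over $K$; and the compatibility of the restriction map with the quotient $B\to B/N_0$ is a finite combinatorial condition on the (existentially quantified) polynomials that express the roots of $f_0$ inside $K[X]/(g)$ and the action of $\Gal(M/K)$ on them, finite because $B$ and $N_0$ are fixed. Then $K$ is $\omega$-free iff $K\models\bigwedge_{(B,N_0)}\chi_{B,N_0}$, and each $\chi_{B,N_0}$ is $\Lr$-first-order.

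The main obstacle is precisely the encoding step: checking that ``the splitting field of $f_0$ over $K$ is Galois with Galois group $\cong A$, and the restriction map to it from the splitting field of $g$ coincides, under the identifications, with the quotient $B\to B/N_0$'' really is a single $\Lr$-formula with the coefficients as parameters. No new mathematical input is needed --- it is the standard combinatorial translation of finite Galois theory over $K$, carried out inside the $K$-algebras $K[X]/(f_0)$ and $K[X]/(g)$ via their primitive idempotents and the permutation action on $K$-algebra homomorphisms --- but getting all the quantifier structure right, uniformly in a fixed embedding problem, is where the bookkeeping lives.
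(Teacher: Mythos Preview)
The paper gives no proof of this statement; it is recorded as a known fact with a bare citation to Section~3 of the reference. Your sketch is the standard argument and is correct in outline: $\omega$-freeness is expressed as a countable scheme of $\Lr$-sentences, one $\chi_{B,N_0}$ per isomorphism type of finite embedding problem, by encoding the relevant finite Galois extensions via primitive elements and translating ``$\Gal(L/K)\cong B/N_0$'', ``$L\subseteq M$'', and the compatibility of the restriction map with $B\twoheadrightarrow B/N_0$ into polynomial conditions on the coefficients. You correctly identify the technical burden as lying entirely in the encoding bookkeeping --- in particular, tracking the specific isomorphism $\Gal(L/K)\cong B/N_0$ rather than merely its existence, so that different $\zeta$ with the same kernel are all covered --- rather than in any new idea. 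There is nothing to compare against in the paper itself.
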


\begin{corollary}
	Let  $d$ be an exponent of 
	imperfection in $\omega\cup\{\infty\}$. Then the 
	theory of $\omega$-free pseudo-algebraically closed non-trivially valued fields
	with exponent of imperfection $d$, axiomatized in $\Lthetapdiv$, denoted
	by $\PACVF_{p^d}^{\omega}$
	admits quantifier elimination.
\end{corollary}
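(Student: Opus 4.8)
The plan is to recognize $\PACVF_{p^d}^{\omega}$ as a special case of Corollary \ref{FrobVF-pd-G-QE}, namely as $\FrobVF_{p^d}^{\hat{F}_\omega}$. The key is the elementary group-theoretic remark that, for an arbitrary profinite group $G$, being $\omega$-free is equivalent to the conjunction of the Embedding Property and the condition that $\Im(G)$ be the class of \emph{all} finite groups. Indeed, if every finite embedding problem for $G$ is solvable, then applying solvability to the embedding problem with trivial top group shows every finite group is a quotient of $G$; conversely, if $\Im(G)$ consists of all finite groups, the side condition $B\in\Im(G)$ appearing in the Embedding Property is automatically satisfied for every finite $B$, so the Embedding Property becomes exactly the solvability of all finite embedding problems. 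Since $\Im(\hat{F}_\omega)$ is precisely the class of all finite groups, this identifies the $\omega$-free projective profinite groups with the projective profinite groups that have the Embedding Property and the same finite quotients as $\hat{F}_\omega$.

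Next I would transfer this to fields. If $K$ is $\omega$-free and PAC, then $\Gal(K)$ is $\omega$-free, hence (a fortiori) has the Embedding Property and (by the remark) all finite groups as quotients; so $K$ is a Frobenius field with $\Im(\Gal(K))=\Im(\hat{F}_\omega)$. Conversely, if $K$ is a Frobenius field with $\Im(\Gal(K))=\Im(\hat{F}_\omega)$, then $\Gal(K)$ has the Embedding Property and all finite quotients, hence is $\omega$-free, so $K$ is an $\omega$-free PAC field. Imposing in addition non-triviality of the valuation, characteristic $p$, and exponent of imperfection $d$, we conclude that $\PACVF_{p^d}^{\omega}$ and $\FrobVF_{p^d}^{\hat{F}_\omega}$ axiomatize the same class of valued fields in $\Lthetapdiv$.

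Finally, $\hat{F}_\omega$ is a projective profinite group — free profinite groups are projective, a standard fact of \cite{MR2445111} — and it has the Embedding Property, being itself $\omega$-free by the classical theorem of Iwasawa (see \cite{MR2445111}). Hence Corollary \ref{FrobVF-pd-G-QE} applies to $G=\hat{F}_\omega$ and yields quantifier elimination for $\FrobVF_{p^d}^{\hat{F}_\omega}=\PACVF_{p^d}^{\omega}$ in $\Lthetapdiv$. I do not anticipate a genuine obstacle here; the only step requiring a little care is the equivalence "$\omega$-free $\Longleftrightarrow$ Embedding Property $+$ all finite quotients", which is exactly what lets the finite-quotient bookkeeping in the definition of $\FrobVF_{p^d}^{\hat{F}_\omega}$ encode $\omega$-freeness.
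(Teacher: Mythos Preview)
Your proposal is correct and follows essentially the same approach as the paper: both reduce to Theorem~\ref{FrobVF-pd-QE} by observing that $\omega$-free fields are Frobenius and that every $I_G$ is trivially true in any $\omega$-free PAC field. The only difference is packaging: the paper appeals directly to Theorem~\ref{FrobVF-pd-QE} and notes each $I_G$ is equivalent to $0=0$, while you route through Corollary~\ref{FrobVF-pd-G-QE} with $G=\hat{F}_\omega$ and therefore prove the full bi-implication ``$\omega$-free $\Leftrightarrow$ Embedding Property $+$ all finite quotients'' where the paper only needs the forward direction.
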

\begin{proof}
	By definition, an $\omega$-free field is Frobenius. Furthermore, in the 
	Galois formalism, for every finite group $G$, the $0$-place predicate
	$I_G$ is equivalent to $0=0$ over every model of $\PACVF_{p^d}^{\omega}$.
	The conclusion then follows from Theorem \ref{FrobVF-pd-QE}.
\end{proof}

\begin{corollary}
	The 
	theory of algebraically closed non-trivially valued fields
	with exponent of imperfection $d$, axiomatized in $\Ldiv$, denoted
	by $\ACVF_p$
	admits quantifier elimination.
\end{corollary}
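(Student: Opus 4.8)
The plan is to realize $\ACVF_p$ as the most degenerate instance of the machinery already built and then reduce the language. An algebraically closed field $K$ is perfect, so $[K:K^p]=p^0$ and its exponent of imperfection is $0$ (so the ``$d$'' in the statement is forced to be $0$); it is PAC, since every $K$-variety, \emph{a fortiori} every geometrically integral one, has a $K$-point; and its absolute Galois group is the trivial group $\{1\}$, which is projective and has the Embedding Property in the vacuous sense that its only finite embedding problem is the trivial one. A profinite group has the same finite quotients as $\{1\}$ if and only if it is trivial, so a Frobenius non-trivially valued field of characteristic $p$ with absolute Galois group having the finite quotients of $\{1\}$ and exponent of imperfection $0$ is exactly an algebraically closed non-trivially valued field of characteristic $p$. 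Hence, as $\Lthetapdiv$-structures, the models of $\FrobVF_{p^0}^{\{1\}}$ are precisely the canonical expansions of the models of $\ACVF_p$, and by Corollary \ref{FrobVF-pd-G-QE} the theory $\FrobVF_{p^0}^{\{1\}}$ admits quantifier elimination in $\Lthetapdiv$.

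It remains to transfer quantifier elimination down from $\Lthetapdiv$ to $\Ldiv$, and I would do this with Fact \ref{use-embedding-to-get-QE}. The key observation is that on an algebraically closed valued field the extra symbols of $\Lthetapdiv$ are already determined by the $\Ldiv$-structure: every polynomial splits, so $\theta_{n,i}(a_0,\ldots,a_n)$ is the $i$-th coefficient of the monic associate of $\pi_n(a_0,\ldots,a_n)$, hence a rational expression in the $a_j$; and the field being perfect, no tuple of length $n\geq 1$ is $p$-independent, so $\lambda_{n,i}\equiv 0$ there, while $\lambda_{0,0}$ is the inverse Frobenius $x\mapsto x^{1/p}$ (and in characteristic $0$ there are no $\lambda$'s at all). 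Thus the $\Lthetapdiv$-expansion of a model of $\ACVF_p$ is a \emph{definitional} expansion of its $\Ldiv$-reduct; in particular $|M|^+$-saturation means the same thing in either language.

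Now given $\M\models\ACVF_p$, $A\subseteq M$, $\N\models\ACVF_p$ with $\N$ being $|M|^+$-saturated, and a partial $\Ldiv$-embedding $f:A\to N$, let $A^\ast$ be the $\Lthetapdiv$-substructure of $\M$ generated by $A$. Since each $\theta_{n,i}$ is a rational expression in split coefficients and $\lambda_{0,0}$ is the $p$-th root function, both preserved by any $\Ldiv$-embedding between algebraically closed valued fields---here one uses that $\N$ is algebraically closed to supply the needed $p$-th roots---the map $f$ extends uniquely to a partial $\Lthetapdiv$-embedding $f^\ast:A^\ast\to N$. Applying quantifier elimination for $\FrobVF_{p^0}^{\{1\}}$ via Fact \ref{use-embedding-to-get-QE}, $f^\ast$ extends to an $\Lthetapdiv$-embedding $\M\to\N$, whose $\Ldiv$-reduct is an $\Ldiv$-embedding extending $f$; by Fact \ref{use-embedding-to-get-QE} once more, $\ACVF_p$ has quantifier elimination in $\Ldiv$. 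The only step needing genuine (but easy) verification is that an $\Ldiv$-embedding between algebraically closed valued fields automatically respects the splitting coefficients and relative $p$-coordinate functions---that is, that the $\Lthetapdiv$-expansion is really definitional; everything else is formal bookkeeping. Of course, one may instead simply cite the classical result recorded in Fact \ref{ACVF-QE}, and this corollary is best read as the observation that $\ACVF_p$ sits inside the present framework as its simplest case.
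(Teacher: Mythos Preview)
Your proposal is correct and follows essentially the same approach as the paper: identify $\ACVF_p$ as the degenerate case $\FrobVF_{p^0}^{\{1\}}$, invoke the main quantifier-elimination result, and then eliminate the extra $\Lthetapdiv$-symbols by observing that over an algebraically closed field they are determined by the $\Ldiv$-structure (the $\theta_{n,i}$ reduce to rational expressions since every polynomial splits, the $\lambda_{n,i}$ with $n\geq 1$ vanish by perfectness, and $\lambda_{0,0}$ is the inverse Frobenius). The only difference is cosmetic: the paper transfers QE down to $\Ldiv$ by writing out quantifier-free $\Ldiv$-equivalents for the graphs of the extra functions directly, whereas you route through Fact~\ref{use-embedding-to-get-QE} and extend partial embeddings---both are valid and amount to the same definitional-expansion observation.
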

\begin{proof}
	For an algebraically closed fields $K$, $\Gal(K)=\{1\}$. So $K$ is Frobenius.
	Furthermore,  for all positive integer $n$ and  $i=0, \ldots, n$,
	$\theta_{n,i}(a_0, a_1, \ldots, a_n) = a_i/a_n$ if $a_n\neq 0$.  Thus 
	a formula of the form 
	$\theta_{n,i}(x_0, x_1, \ldots, x_n) = y$ is 
	equivalent to 
	$$ x_n=0 \vee (x_n\neq 0 \wedge y x_n = x_i).$$
	For function symbols in $\Lang_p$, $\lambda_{0,0}(x)=x^{1/p}$ and 
	$\lambda_{n,i}(x; y_1, \ldots, y_n)$ is always $0$ if $n\geq1$. 
	The $0$-place predicates $I_G$ are eqiuvalent to $0=0$ or $0=1$ depending
	on whether $G$ is $1$ or not. Thus by Theorem \ref{FrobVF-pd-QE}, 
	$\ACVF_p$ has quantifier elimination. 
\end{proof}

The following is a known result. 

\begin{proposition}
	Let  $d$ be an exponent of 
	imperfection in $\omega\cup\{\infty\}$. Then the 
	theory of separably closed non-trivially valued fields
	with exponent of imperfection $d$, axiomatized in $\Lpdiv$, denoted
	by $\SCVF_{p^d}$
	admits quantifier elimination.
\end{proposition}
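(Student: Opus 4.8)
The plan is to verify the embedding criterion of Fact~\ref{use-embedding-to-get-QE} by running the argument of the proof of Theorem~\ref{FrobVF-pd-QE}, specialized to the case in which all the ambient Galois groups are trivial; this is exactly the situation in which no appeal to the splitting-coefficient functions $\theta_{n,i}$ and no use of the Frobenius property is needed, so that $\Lpdiv$ suffices in place of $\Lthetapdiv$. (The statement is a known one; the point here is only that it slots into the present framework.) Concretely, I would take $(E,V_E),(F,V_F)\models\SCVF_{p^d}$ with $(F,V_F)$ being $|E|^+$-saturated, an $\Lpdiv$-substructure $(A,V_A)$ of $(E,V_E)$, and an $\Lpdiv$-embedding $\varepsilon\colon A\to F$, and I must extend $\varepsilon$ to an $\Lpdiv$-embedding of $(E,V_E)$ into $(F,V_F)$. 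The substructure $A$ is a subring of $E$ closed under the functions $\lambda_{n,i}$, so the same computation as in the proof of Theorem~\ref{FrobVF-pd-QE} shows that every $p$-independent subset of $K:=\Frac(A)$ remains $p$-independent in $E$, i.e.\ $E/K$ is separable, and likewise $F/\varepsilon(K)$ is separable; here $K$ carries the valuation induced from $E$ and $\varepsilon$ extends uniquely to a valued-field embedding of $K$ into $F$, again denoted $\varepsilon$.

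Since $E$ and $F$ are separably closed, every irreducible separable polynomial over $K$ that has a root in $E$ splits in both $E$ and $F$, and symmetrically for $\varepsilon(K)$; so Lemma~\ref{extending-substructure-to-relative-separable-closure} applies and extends $\varepsilon$ to an $\Ldiv$-isomorphism $\tilde\varepsilon\colon E\cap K^\sep\to F\cap\varepsilon(K)^\sep$ carrying the induced valuations. Because $E$ and $F$ are separably closed, $E\cap K^\sep=K^\sep$ and $F\cap\varepsilon(K)^\sep=\varepsilon(K)^\sep$, so $\tilde\varepsilon\colon K^\sep\to\varepsilon(K)^\sep$; being a field isomorphism it automatically respects the $\lambda_{n,i}$, hence is an $\Lpdiv$-isomorphism. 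Set $L:=K^\sep\subseteq E$ and $M:=\varepsilon(K)^\sep\subseteq F$, with induced valuations $V_L,V_M$. Since $E/K$ and $F/\varepsilon(K)$ are separable, $L$ and $M$ are the relative algebraic closures of $K$ and $\varepsilon(K)$ in $E$ and $F$, so $E/L$ and $F/M$ are regular; moreover $L$ and $M$ are themselves separably closed, so $L^\sep=L$, $M^\sep=M$, and all of $\Gal(E),\Gal(F),\Gal(L),\Gal(M)$ are trivial.

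Next I would invoke the Valuation Theoretic Embedding Lemma~\ref{valuation-theoretic-embedding-lemma} with $\Phi_0:=\tilde\varepsilon\colon L^\sep\to M^\sep$, with $\varphi_0$ the induced (trivial) isomorphism, and with $\varphi\colon\Gal(F)\to\Gal(E)$ the trivial, hence surjective, homomorphism. Its hypotheses hold: $E/L$ and $F/M$ are regular, $F$ is pseudo-algebraically closed (every separably closed field is) and $|E|^+$-saturated, the exponent of imperfection of $E$ equals that of $F$ (both are $d$), and $\Phi_0$ restricts to the $\Ldiv$-isomorphism $\tilde\varepsilon$ of $(L,V_L)$ onto $(M,V_M)$. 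The lemma then produces a field embedding $\Phi\colon E^\sep\to F^\sep$ extending $\Phi_0$ (hence $\varepsilon$), inducing $\varphi$, restricting to an $\Ldiv$-embedding of $(E,V_E)$ into $(F,V_F)$, and---because $\varphi$ is surjective---with $F/\Phi(E)$ regular. As $E^\sep=E$ and $F^\sep=F$, this $\Phi$ is a valued-field embedding of $(E,V_E)$ into $(F,V_F)$ extending $\varepsilon$.

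The last step---and essentially the only delicate point---is to check that $\Phi$ respects the functions $\lambda_{n,i}$, i.e.\ that it is an $\Lpdiv$-embedding. This is automatic precisely because $F/\Phi(E)$ is regular, hence separable: then $F^p$ is linearly disjoint from $\Phi(E)$ over $\Phi(E)^p$ and $\Phi(E)$ is relatively algebraically closed in $F$, so $p$-independence over $\Phi(E)^p$ is preserved in $F$, the $p$-coordinates of an element of $\Phi(E)$ are the same whether computed in $\Phi(E)$ or in $F$, and if $y_1,\dots,y_n$ are $p$-independent in $E$ while $a\notin E^p(y_1,\dots,y_n)$ then $\Phi(a)\notin F^p(\Phi y_1,\dots,\Phi y_n)$; the clauses of the definition of $\lambda_{n,i}$ then match up. With this, $\Phi$ is the required $\Lpdiv$-embedding, and Fact~\ref{use-embedding-to-get-QE} yields quantifier elimination for $\SCVF_{p^d}$ in $\Lpdiv$. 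I expect the main obstacle to be this transfer from an $\Ldiv$-embedding to an $\Lpdiv$-embedding via regularity of $F/\Phi(E)$, which is the analogue of the corresponding step in Theorem~\ref{FrobVF-pd-QE}; the only other place where care is needed is the existence of the \emph{valued} isomorphism $\tilde\varepsilon$ on the relative separable closures, which is free here only because $E$ and $F$ are separably closed---it is precisely at that step that, for general pseudo-algebraically closed valued fields, one is forced to invoke closure under the splitting coefficients instead.
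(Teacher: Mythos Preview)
Your proposal is correct and follows essentially the same route as the paper's proof: verify the embedding criterion of Fact~\ref{use-embedding-to-get-QE}, reduce to a field substructure over which the ambient extensions are separable, extend the embedding to the relative separable closure as a valued-field map, and then invoke the Valuation Theoretic Embedding Lemma~\ref{valuation-theoretic-embedding-lemma} with trivial Galois data. The only cosmetic differences are that the paper first argues one may take $A$ itself to be a field (via the identity $\lambda_{1,1}(a;a^{p+1})=a^{-1}$ for $a\notin A^p$, which shows the $\Lpdiv$-structure on $\Frac(A)$ is determined by that on $A$), whereas you pass directly to $K=\Frac(A)$; and the paper extends $\varepsilon$ to $A^\sep$ by invoking normality of $A^\sep/A$ directly rather than through Lemma~\ref{extending-substructure-to-relative-separable-closure}.
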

\begin{proof}
	The proof is similar to that of Theorem \ref{FrobVF-pd-QE}.
	We still use Fact \ref{use-embedding-to-get-QE}.
	Suppose that $(E,V_E)$ and $(F, V_F)$ are two models of $\SCVF_{p^d}$ 
	and $(A,V_A)$ an $\Lpdiv$-substructure of $(E,V_E)$ that 
	$\Lpdiv$-embeds into $(F,V_F)$ by $\varepsilon: A\to F$.
	We need to show that there is 
	an $\Lpdiv$-embedding of $(E,V_E)$ into $(F,V_F)$ that extends 
	$\varepsilon$ whenever $(F,V_F)$ is $|E|^+$-saturated.

	First the $\Lpdiv$-structure of the quotient field of $A$ is
	uniquely determined by the $\Lpdiv$-structure of $A$. Since $A$ is 
	closed under the relative $p$ coordinate function $\lambda_{0,0}(x)=x^{1/p}$,
	$A^p=A\cap E^p$. For $a\not\in A^p$, we have $\lambda_{1,1}(a;a^{p+1})=a^{-1}$,
	so $a^{-1}\in A$. Thus if $a^{-1}\not\in A$, then $a\in A^p$. 
	Thus for $b_0, b_1, \ldots, b_n\in A$ and $c_0, c_1,
	\ldots, c_n\in \Frac(A)\backslash A$,
	$$\lambda_{n,i}(b_0/c_0; b_1/c_1, \ldots, b_n/c_n)=
	\frac{\prod_{j=1}^n c_j^{i(j)}}{c_0^{1/p}}\lambda_{n,i}(b_0; b_1, 
	\ldots, b_n).$$
	Thus the $\Lp$-structure of $\Frac(A)$ is uniquely determined by 
	that of $A$.  Meanwhile, for the $\Ldiv$-structure, the value of 
	any $a/b\in \Frac(A)$ is the difference between the value of $a$ and 
	the value of $b$. Thus the $\Lpdiv$-structure of $\Frac(A)$ is 
	indeed uniquely determined by that of $A$.

	Therefore, we may assume that $A$ is a field. 

	Second, as we have seen in the proof of Theorem \ref{FrobVF-pd-QE}, 
	$E/A$ and $F/A$ are separable field extensions. So $A^\sep \cap E = 
	A^\sep$ and $A^\sep \cap F = A^\sep$. Since $A^\sep/A$ is a normal field
	extension, we can extend the $\Ldiv$-embedding $\varepsilon$ 
	to an $\Ldiv$-embedding $\tilde{\varepsilon}:A^\sep \to F$, which 
	in turn is also an $\Lpdiv$-embedding.

	Third, the Valuation Theoretic Embedding Lemma ensures that 
	$\tilde{\varepsilon}$ can be extended to an $\Lpdiv$-embedding 
	of $(E,V)$, and we are done.
\end{proof}

\section{Elementary invariants of pseudo-algebraically closed non-trivially 
valued fields}
\label{elementary-invariants}

Just like the field theoretic case (see \cite{CherlinvddMacintyre:ThRCfields}), 
the Valuation Theoretic Embedding Lemma \ref{valuation-theoretic-embedding-lemma}
enables us to characterize the elementary equivalence relation 
of pseudo-algebraically 
closed non-trivially valued fields, in terms of the so-called
coelementary equivalences. In this section, we use the notation and results from 
\cite{CherlinvddMacintyre:ThRCfields}.  See also \cite{MR1925952} for relevant 
discussions.

\begin{definition}
	For any field $K$, we use $\Abs(K)$ to denote the subfield 
	of $K$ consisting of all the numbers algebraic over the prime subfield 
	of $K$. $\Abs(K)$ is usually called the subfield of {\bf absolute numbers}
	in $K$.
\end{definition}

\begin{theorem}
	Suppose that $(K,V)$ and $(L,W)$ are two pseudo-algebraically 
	close non-trivially valued fields. Denote by $i$ and $j$ the 
	following $\Ldiv$-embeddings respectively, 
	\begin{align*}
		i: (\Abs(K), V\cap \Abs(K))\to  (K, V), \\
		j: (\Abs(L), W\cap \Abs(L))\to (L,W).
	\end{align*}
	Also denote by $\hat{i}$ and $\hat{j}$ the following induced 
	restriction maps of the absolute Galois groups respectively, 
	\begin{align*}
		\hat{i}: \Gal(K)\to \Gal(\Abs(K))\\ 
		\hat{j}: \Gal(L)\to \Gal(\Abs(L))
	\end{align*}
	Then
	$$(K,V)\equiv_{\Ldiv} (L,W)$$
	if and only if the following conditions
	hold: 
	\begin{enumerate}
		\item $K$ and $L$ have the same exponent of 
			imperfectness; 
		\item there is an $\Lr$-isomorphism 
			$$f: \Abs(L)^\sep \to \Abs(K)^\sep$$
			that restricts to an $\Ldiv$-isomorphism from
				$(\Abs(L),W\cap \Abs(L))$
			onto $(\Abs(K), V\cap \Abs(K))$ such that 
			$$(\Gal(K), \hat{i})\equiv_{\hat{f}}^\mathrm{o} 
			(\Gal(L),\hat{j}).$$
	\end{enumerate}
\end{theorem}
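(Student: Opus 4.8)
The plan is to prove both directions using the Valuation Theoretic Embedding Lemma \ref{valuation-theoretic-embedding-lemma}, following the template of the characterization of elementary equivalence of PAC fields in \cite{CherlinvddMacintyre:ThRCfields}, but carrying the valuations along throughout. For the ``only if'' direction, assume $(K,V)\equiv_{\Ldiv}(L,W)$. Condition (1) is immediate since the exponent of imperfection is expressible by $\Lr$-sentences (the statements ``$[K:K^p]\geq p^n$'' for each $n$, together with ``$[K:K^p]\leq p^d$'' when $d$ is finite). For condition (2), one first notes that $\Abs(K)$ and $\Abs(L)$, being the relative algebraic closures of the prime field, are elementarily determined (each absolute number satisfies an $\Lr$-formula pinning down its minimal polynomial and isolating it among the conjugates, and the valuation-theoretic data on these finitely many elements is likewise captured by $\Ldiv$-formulas); passing to a common elementary extension or to sufficiently saturated models, one extracts the $\Lr$-isomorphism $f:\Abs(L)^\sep\to\Abs(K)^\sep$ restricting to an $\Ldiv$-isomorphism on the absolute parts, and one reads off the coelementary equivalence $(\Gal(K),\hat i)\equiv^{\mathrm o}_{\hat f}(\Gal(L),\hat j)$ from the back-and-forth on finite Galois data, exactly as in op.~cit., the only addition being that each finite Galois extension now carries its (finitely many) valuation extensions, whose relative position is again first-order.

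For the ``if'' direction, assume (1) and (2). The goal is to run a back-and-forth between $(K,V)$ and $(L,W)$ in $\Ldiv$; by symmetry it suffices, given a finitely generated $\Ldiv$-substructure and a partial $\Ldiv$-embedding into a saturated elementary extension, to extend it — equivalently, by Fact \ref{use-embedding-to-get-QE}-style reasoning, to embed $(K,V)$ into a highly saturated $(L^*,W^*)\succeq(L,W)$ over the common absolute part. The coelementary equivalence $(\Gal(K),\hat i)\equiv^{\mathrm o}_{\hat f}(\Gal(L),\hat j)$ provides, for each relevant finite quotient, a matching of the Galois data compatible with $\hat f$; taking an appropriate limit (and using that $(L^*,W^*)$ is saturated, together with the fact that $\Abs(K)$ and $\Abs(L)$ are algebraic over the prime field so that Fact \ref{existence-of-e-free-PAC-with-exponent-of-imperfection-d}-type control applies) one assembles a field isomorphism $\Phi_0:\Abs(K)^\sep\to\Abs(L^*)^\sep$ — built from $f^{-1}$ — that restricts to an $\Ldiv$-isomorphism $(\Abs(K),V\cap\Abs(K))\to(\Abs(L^*),W^*\cap\Abs(L^*))$ and a homomorphism $\varphi:\Gal(L^*)\to\Gal(K)$ making the square in Lemma \ref{valuation-theoretic-embedding-lemma} commute. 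Here $E:=K$ over $\Abs(K)$ and $F:=L^*$ over $\Abs(L^*)$ are the regular extensions, $F$ is PAC and can be taken $|E|^+$-saturated, and condition (1) gives the needed inequality of exponents of imperfection. Lemma \ref{valuation-theoretic-embedding-lemma} then yields an $\Ldiv$-embedding $\Phi:(K,V)\to(L^*,W^*)$ extending $\Phi_0$. Since this works symmetrically, a standard back-and-forth packaging gives $(K,V)\equiv_{\Ldiv}(L,W)$.

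The main obstacle I expect is the bookkeeping in the ``if'' direction needed to feed the Embedding Lemma: one must produce $\Phi_0$ and $\varphi$ with all compatibilities — $\Phi_0$ an $\Ldiv$-isomorphism on the absolute parts, $\varphi$ a homomorphism (surjective when one wants regularity of $F/\Phi(E)$) fitting into the commutative square over $\varphi_0$ — purely out of the coelementary data $\equiv^{\mathrm o}_{\hat f}$ and the saturation of $(L^*,W^*)$. This is the place where the ``comodel theory'' of \cite{CherlinvddMacintyre:ThRCfields} is genuinely used: one invokes their analysis of inverse systems of finite Galois data to realize the limit, and the novelty here is only that each stage additionally records the finitely many valuation extensions (controlled by Lemma \ref{number-of-extensions-equals-degree} and Fact \ref{conjugate-theorem-normal-extensions}), so that $\Phi_0$ can be chosen to respect valuations. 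Once $\Phi_0$ and $\varphi$ are in hand, the rest is a direct citation of Lemma \ref{valuation-theoretic-embedding-lemma} plus the routine symmetry argument; I would not write those details out in full, referring instead to the parallel argument in \cite{CherlinvddMacintyre:ThRCfields} for the field-only case.
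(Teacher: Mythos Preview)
Your ``only if'' direction matches the paper's: take an isomorphism between elementary extensions, extend it to the separable closures via Fact \ref{conjugate-theorem-normal-extensions}, restrict to the absolute parts, and read off the coelementary equivalence from \cite{CherlinvddMacintyre:ThRCfields}.

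In the ``if'' direction your outline has the right ingredients but a genuine organizational gap. You write that running a back-and-forth is ``equivalently'' reduced to producing a single $\Ldiv$-embedding of $(K,V)$ into a saturated $(L^*,W^*)$ over the absolute part, and then invoking symmetry. This reduction is not valid: one embedding each way does not yield elementary equivalence, and nothing in your sketch shows that the embedding $\Phi$ you obtain is \emph{elementary}. Relatedly, you propose to manufacture a single global homomorphism $\varphi:\Gal(L^*)\to\Gal(K)$ directly from the coelementary equivalence and then defer to \cite{CherlinvddMacintyre:ThRCfields} for the construction; but the argument there (Proposition 33) does not produce such a global $\varphi$ --- it is itself an iterative back-and-forth over countable intermediate subfields --- so the deferral is circular.

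The paper's proof fixes this by making the back-and-forth explicit. It replaces $(K,V)$ and $(L,W)$ by $\aleph_1$-saturated models and builds a system of triples $(g,M,N)$ with $M\subseteq K$, $N\subseteq L$ countable, $K/M$ and $L/N$ regular, $g:M^{\sep}\to N^{\sep}$ restricting to an $\Ldiv$-isomorphism $M\to N$, and $(\Gal(K),\hat{i}_M)\equiv^{\mathrm o}_{\hat g}(\Gal(L),\hat{j}_N)$ maintained throughout. The base triple is $(f,\Abs(K),\Abs(L))$. The extension step --- given such a triple and a countable set $A$ on one side, find a larger triple covering $A$ --- is exactly where the Valuation Theoretic Embedding Lemma is applied, over the \emph{countable} $M$ and $N$, together with the proof of Proposition 33 in \cite{CherlinvddMacintyre:ThRCfields} to preserve the coelementary condition. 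So the Embedding Lemma is invoked repeatedly at each stage of the back-and-forth, not once globally; the coelementary hypothesis is what allows each step to go through, rather than being consumed to produce a single $\varphi$ up front.
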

\begin{proof}
	The proof is more or less similar to the field theoretic case given 
	in \cite{CherlinvddMacintyre:ThRCfields}, with some care for the 
	valuations.

	Suppose that $(K,V)\equiv_{\Ldiv} (L,W)$. Then obviously 
	$K$ and $L$ have the same exponent of imperfectness. 
	Let $f^*$ be an isomorphism 
	of an elementary extension $(L^*, W^*)$ of $(L,W)$ onto an elementary 
	extension $(K^*, V^*)$ of $(K,V)$. By Fact 
	\ref{conjugate-theorem-normal-extensions}, there exists 
	an $\Ldiv$-isomorphism $f^{*\sep}: (L^*)^{\sep} \to (K^*)^{\sep}$. 
	Let $f$ be the restriction of $f^{*\sep}$ on $\Abs(L)^\sep$. Because
	$L^*$ and $K^*$ are elementary extensions of $L$ and $K$ respectively, 
	we have $\Abs(L^*)=\Abs(L)$ and $\Abs(K^*)=\Abs(K)$. Thus $f$ 
	restricts to an $\Ldiv$-isomorphism from $(\Abs(L), W\cap \Abs(L))$
	to $(\Abs(K), V\cap \Abs(K))$. Meanwhile, by the proof of Lemma 32 
	in \cite{CherlinvddMacintyre:ThRCfields}, we also have 
	$$(\Gal(K), \hat{i})\equiv_{\hat{f}}^\mathrm{o}
	(\Gal(L),\hat{j}).$$

	For the other direction, suppose that $K$ and $L$ have the same 
	exponent of imperfectness and that there is an $\Lr$-isomorphism 
	$$f: \Abs(L)^\sep \to \Abs(K)^\sep$$
	that restricts to an $\Ldiv$-isomorphism from $(\Abs(L),W\cap \Abs(L))$
	onto $(\Abs(K), V\cap \Abs(K))$ such that 
	$$(\Gal(K), \hat{i})\equiv_{\hat{f}}^\mathrm{o}
	(\Gal(L),\hat{j}),$$
	then we show that 
	$(K,V)\equiv_{\Ldiv} (L,W)$.  We also follow the proof of Proposition 33
	in \cite{CherlinvddMacintyre:ThRCfields}. We may assume that 
	both $(K,V)$ and $(L,W)$ are both $\aleph_1$-saturated. We show the 
	existence of a back-and-forth system of triples $(g,M,N)$ satisfying 
	the following Property \eqref{property-star},
	\begin{equation}
	\label{property-star}
		\left\{
		\begin{aligned}
			&\quad \text{$K/M$ and $L/N$ are regular field extensions
			where $M$ and}
			 \\
			&\quad \text{$N$ are countable subfields, $g$ 
			is an $\Lr$-isomorphism } \\
			&\quad \text{$g:M^\sep\to N^\sep$
			which restricts to an $\Ldiv$-isomorphism
			} \\
			&\quad \text{from $M$ onto $N$, and with the
			inclusions $i_M$ and $j_N$, } \\
			&\quad \text{$(\Gal(K),\hat{i}_M)
			\equiv_{\hat{g}}^\mathrm{o}
			(\Gal(L),\hat{j}_N)$ holds.}
		\end{aligned}
		\right.
	\end{equation}
	Note that first $(f, \Abs(K),\Abs(L))$ satisfies Property 
	\eqref{property-star}. 

	Now, if $(g,M,N)$ satisfies Property \eqref{property-star}, $A$ 
	a countable subset of $L$, then we show that there is a triple 
	$(g_1, M_1, N_1)$ satisfying Property \eqref{property-star} such that
	$M_1/M$ and $N_1/N(A)$ and $g_1$ extending $g$. This is guaranteed 
	by the proof of Proposition 33 of \cite{CherlinvddMacintyre:ThRCfields}
	and our Valuation Theoretic Embedding
	Lemma \ref{valuation-theoretic-embedding-lemma}. This back-and-forth
	system then proves that $(K,V)\equiv_{\Ldiv} (L,W)$.
\end{proof}

\section*{Acknowledgement}
Results in this article are from research carried out in 
Project 11626105 supported by the National Natural Science Foundation of China.

\bibliography{regenerate}{}
\bibliographystyle{apalike}

\end{document}